\numberwithin{equation}{section}
\newtheorem{proposition}{Proposition}[section]
\newtheorem{lemma}[proposition]{Lemma}
\newtheorem{theorem}[proposition]{Theorem}
\newtheorem{rmk}[proposition]{Remark}
\renewcommand{\theequation}{\thesection.\arabic{equation}}
\renewcommand{\oddsidemargin}{5mm}
\def\figurename{Fig.} 
\renewcommand{\fnum@figure}[1]{\figurename~\thefigure.}
\begin{document}
\parindent 15pt
\renewcommand{\theequation}{\thesection.\arabic{equation}}
\renewcommand{\baselinestretch}{1.15}
\renewcommand{\arraystretch}{1.1}
\renewcommand{\vec}[1]{\bm{#1}}
\title{Large-Amplitude Steady Electrohydrodynamic Solitary Waves with Constant Vorticity\thanks{T. Feng was supported by Postgraduate Research \& Practice Innovation Program of Jiangsu Province (No. KYCX24\_3914). Y. Zhang was supported by National Natural Science Foundation of China (No. 12301133), China Postdoctoral Science Foundation (No. 2023M741441, 2024T170353) and Jiangsu Education Department (No. 23KJB110007). Z. Zhang was supported by  National Key R\&D Program of China (No. 2022YFA1005601) and National Natural Science Foundation of China (No. 12031015).}
}
\author{Tingting Feng$^{a,}$\thanks{T.Feng,~~Email: 2112302204@stmail.ujs.edu.cn}~~~Yong Zhang$^{a,}$\thanks{Y.Zhang,~~Corresponding author. Email: 18842629891@163.com}~~~Zhitao Zhang$^{b,c,}$\thanks{Z.Zhang,~~Email: zzt@math.ac.cn}\\
	{\small $^a$ School of Mathematical Sciences, Jiangsu University, Zhenjiang 212013, P. R. China.}\\
{\small $^b$  HLM, Academy of Mathematics and Systems Science, Chinese Academy}\\
	{\small  of Sciences, Beijing 100190, P. R. China.}\\
	{\small $^c$  School of Mathematical Sciences,
		University of Chinese Academy of Sciences,}\\
	{\small Beijing 100049, P. R. China.}	
}
\date{}
\maketitle
\begin{abstract}
This paper investigates solitary water waves propagating along the surface of a two-dimensional dielectric fluid with constant vorticity in the presence of an external electric field. We formulate the system as a nonlinear free boundary problem where the Euler equations and electric potential equations are strongly coupled at the interface. A major challenge in such setting is the loss of standard monotonicity arguments due to the interaction between the velocity and electric fields. We overcome this difficulty by establishing new nodal properties for the combined system, ensuring the wave remains a symmetric elevation profile along the global branch. Moreover, along the global bifurcation curve, one of the following case must occur: (i) the formation of an equilibrium stagnation point, (ii) the degeneration of the conformal mapping, (iii) the onset of flow stagnation, or (iv) an unbounded increase in the dimensionless wave speed.\\
\textbf{Keywords:} Solitary waves; Electrohydrodynamics; Global bifurcation; Large-amplitude.\\
 \textbf{AMS Subject Classification (2020):} 76B25, 35Q35, 76B03.
\end{abstract}

\section{Introduction}
Electrohydrodynamics (EHD) describes the interaction between electric fields and fluid flow fields \cite{DTP2019}. It has broad applications across chemistry, biology, and engineering. Recent developments include EHD conduction pumping for enhanced heat transfer in cooling systems \cite{EMGSG2006}; industrial coating processes, which are crucial in material production \cite{SFKPMS1997}; and electrospray ionization, a key technique for transforming solution-phase ions into highly charged gas-phase macromolecular ions \cite{JFDIGL1994}. Given the breadth of these applications, a deeper understanding of EHD behavior, particularly electrohydrodynamic interfacial waves, is of significant importance \cite{MVFTGRRAD2022, TGZWDP2024}.\par

Electrohydrodynamic interfacial waves driven by gravity and electric fields have been widely studied through weakly nonlinear theories \cite{GHPHDTP2007, HMJDD2021, Jiang, WZ2017} and numerical simulations \cite{DAGTV2020, DGV2022, GWV2023, LZW2017}. Earlier works \cite{TGZWDP2024, SVGVW2016} focused on wave singularities under normal electric fields, with or without surface tension. However, analytical results for fully nonlinear water waves with vorticity under electric fields remain limited. In a recent study \cite{DGXFZY2024}, a flattening method and local bifurcation theory were employed to establish the existence of small-amplitude periodic electrohydrodynamic waves, though the analysis was restricted to free surfaces in graph form. Numerical evidence \cite{SADVMH2019, VBJM1995, JASPGS1985, AFTDHP1988, JMVB1994} suggests the existence of overhanging profiles. In recent years, there has been notable progress in rigorously establishing the existence of overhanging profiles using perturbative techniques. For periodic waves, see \cite{VMHMHW2020, VMHMHW2022, FG2025}, and for solitary waves, see \cite{JDMDPMMMHW2024}. More recently, conformal mapping and Crandall-Rabinowitz local bifurcation theory were employed in \cite{DGTFZY2025} to establish the existence of periodic electrohydrodynamic waves, allowing for overhanging profiles.\par

 This paper presents the first construction of solitary electrohydrodynamic waves featuring constant vorticity and permitting overhanging profiles. Unlike in \cite{DGTFZY2025}, the linearized operator at our bifurcation point is not Fredholm, so the Crandall-Rabinowitz local bifurcation theorem used for periodic waves \cite{ACWS2004, Constantin, SW2009} does not apply. To overcome this, there are a great number of work on steady solitary waves has been done in \cite{CJAJFT1981, CJAJFT19812,MAL1954, JTB1977,RMCSWMHW2022, KOFDHH1954,Kozlov,RMCSWMHW2018,SVHMHW2023,MHW2013,MHW20152} without electric fields. Thus, we will use a center manifold reduction developed in \cite{RMCSWMHW2022,SVHMHW2023} to construct small-amplitude solutions.
 Moreover, the classical global analytic bifurcation results \cite{END1973, BBJT2003} require compactness, which does not hold here because the domain is unbounded. This issue is resolved by the global bifurcation framework in \cite[Theorem 6.1]{RMCSWMHW2018} or in \cite[Theorem B.1]{SVHMHW2023}, which is designed for solitary water waves. The inclusion of the electric field introduces substantial analytical complexities beyond those encountered in \cite{SVHMHW2023}. To this end, we construct a new working open set. This enables us to verify the required open condition and closed condition, thereby overcoming the fundamental difficulty pertaining to the nodal properties. Consequently, a new phenomenon maybe observed: {\bf an equilibrium stagnation point}, defined as a point in the fluid where all components of both the velocity and electric fields vanish. \par

On the other hand, we would like to mention some recent numerical works \cite{MVFTGRRAD2022, MVFTGRRAD2023, MVFEKRRNZ2024}, which explored the impact of electric fields on the flow beneath surface waves. In \cite{MVFTGRRAD2022}, it was shown that, under a normal electric field, stagnation points of periodic waves appear below the free surface. According to variations in the voltage potential, the flow may contain $0$, $2$, or $3$ stagnation points. For solitary waves, it was established by \cite{MVFTGRRAD2023} that the location of stagnation points is not significantly affected by variations in the electric field, while the electric field itself does not serve as a mechanism for their formation. Furthermore, increasing the strength of a horizontal electric field can eliminate stagnation points in periodic waves, as established by \cite{MVFEKRRNZ2024}.\par

In the following, we consider a two-dimensional, incompressible, inviscid flow in a system. The upper layer is conducting gas with a constant voltage potential, the lower layer is dielectric fluid with density $\rho = 1$ and permittivity $\epsilon_1 > 0$. Electrodes are placed at the top and bottom boundaries. Let $\Omega$ be the unbounded fluid domain in the $(X, Y)$-plane, bounded below by the planar electrode at $Y = 0$ and above by the free surface $\mathcal{S} = \{(\xi(s, d), \eta(s, d)) : s \in \mathbb{R} \}$, where $\xi$ and $\eta$ are defined in subsection \ref{sub2.1} and $\xi'(s)^2 + \eta'(s)^2 \neq 0$. As $|X| \to \infty$, we assume that $\Omega$ tends to a horizontal strip of depth $d$, called the asymptotic depth. See Figure \ref{025+figure1}.\par
\begin{figure}[htbp]
\centering
\includegraphics[width=9cm,height=4.8cm]{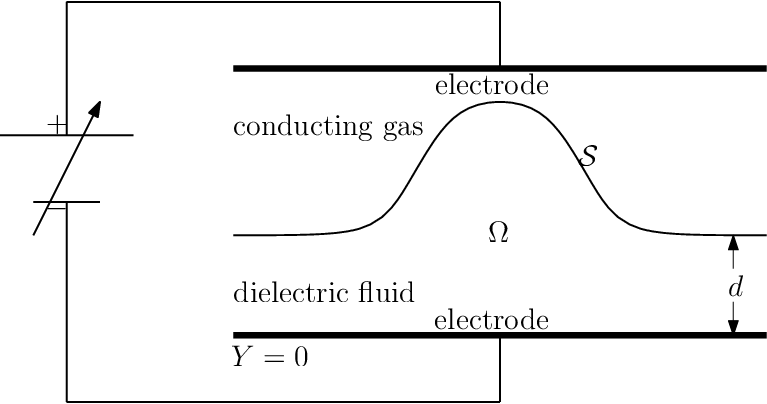}
\caption{Schematic of the problem.}
\label{025+figure1}
\end{figure}
We introduce a stream function $\psi(X, Y)$ such that the velocity field
\begin{align}
(U, V) = (\psi_Y, -\psi_X)\label{flow filed}
\end{align}
satisfies
\begin{eqnarray}
\left\{\begin{array}{llll}{\Delta\psi=\omega} & {\text { in }\Omega}, \\
{\psi=m} & {\text { on }\mathcal{S}}, \\
{\psi=0} & { \text { on }Y=0},\end{array}\right. \label{flow function equation}
\end{eqnarray}
where $\omega$ is the constant vorticity and $m=\int^{\eta(s)}_0\psi_Y(X,Y)dY$ is the relative mass flux.\par
In the electrostatic limit of Maxwell's equations, the induced magnetic field is negligible, making the electric field irrotational by Faraday's law. Thus, we introduce a potential function \(\varphi\) for the electric field satisfying
\begin{align}
\mathbf{E}=(E_1, E_2)=(\varphi_X, \varphi_Y).\label{electric filed}
\end{align}
In the dielectric fluid layer, \(\varphi\) satisfies Laplace equation. The potential difference between the electrodes is \(\varphi_0 > 0\), with boundary conditions \(\varphi = 0\) at the bottom electrode and \(\varphi = \varphi_0\) in the gas layer. That is,
\begin{eqnarray}
\left\{\begin{array}{llll}
{\Delta \varphi=0} & {\text { in }\Omega}, \\
{\varphi=\varphi_0} & {\text { on }\mathcal{S}}, \\
{\varphi=0} & { \text { on }Y=0}. \end{array}\right. \label{potential function equation}
\end{eqnarray}
The electrohydrodynamic system contains an additional energy term arising from the electric field. For a detailed formulation, see \cite[(2.7)]{TGZWDP2024} and \cite[(2.6)]{GWV2023}. The sign of this energy depends on both the region and the orientation of the electric field. Then, we have the following modified Bernoulli law
\begin{eqnarray}
P+\frac{1}{2}|\nabla\psi|^2+g(Y-d)+\frac{\epsilon_1}{2}|\nabla \varphi|^2-\omega\psi=\text{const} \quad\text{in }\Omega, \label{modified Bernoulli law}
\end{eqnarray}
where $P$ is pressure, \( g > 0 \) is the gravitational acceleration constant and $\epsilon_1$ is a dimensionless permittivity. Combining all the above considerations, we conclude that
\begin{eqnarray}
|\nabla\psi|^2+2g(Y-d)+\epsilon_1|\nabla \varphi|^2=(1+\epsilon_1)Q\quad\text{on }\mathcal{S}, \label{surface Bernoulli law}
\end{eqnarray}
where $Q$ is the Bernoulli constant. On the free surface, together with the asymptotic conditions
\begin{equation}
\begin{split}
&\psi_{X} \rightarrow 0, \quad \psi_{Y} \rightarrow F \sqrt{g d}(\gamma\frac{Y-d}{d}+1) \quad \text { as } X \rightarrow \pm \infty, \\
&\varphi_{X}\rightarrow 0, \quad \varphi _{Y} \rightarrow F \sqrt{g d}\quad \text { as } X \rightarrow \pm \infty,
 \end{split}\label{Asymptotic conditions for stream functions and potential functions}
\end{equation}
uniformly in $Y$. Here, $F$ is the Froude number, a dimensionless wave speed. From \eqref{Asymptotic conditions for stream functions and potential functions}, we see that $Q$, $m$, $\varphi_0$ and $F$ are related by
\begin{align}
Q = F^2gd,\quad m = Fg^{1/2}d^{3/2}(1-\frac{1}{2}\gamma),\quad\varphi_0=Fg^{1/2}d^{3/2}, \label{Q and F relationship}
\end{align}
the dimensionless measure $\gamma$ of $\omega$ is given by
\begin{align}
\gamma=\omega\frac{d^{1/2}}{Fg^{1/2}}. \notag
\end{align}
\par
The structure of the paper is organized as follows. In Section \ref{025section2}, we reformulate the free boundary problem into the forms suitable for further analysis and state the main results. Section \ref{025section3} formulates the problem as a nonlinear operator equation in an appropriate Banach space and examines the associated linearized operators. Section \ref{025section4} is devoted to establishing the existence theory for both small-amplitude solutions (see Theorem \ref{006theorem7.1}) and large-amplitude solutions (see Theorem \ref{006theorem7.4}). In Section \ref{025section5}, we present several qualitative results that facilitate the analysis of the global bifurcation structure and culminate in the proof of Theorem \ref{006theorem7.9}. 
\section{Reformulations}\label{025section2}
Let \(\Omega\) be a connected, open subset of \(\mathbb{R}^{n}\), possibly unbounded. We define \(\varphi \in C_{\mathrm{c}}^{\infty}(\overline{\Omega})\) as a function in \(C^{\infty}(\overline{\Omega})\) with compact support in \(\overline{\Omega}\). For \(k \in \mathbb{N}\) and \(\beta \in (0,1)\), we say \(u \in C^{k+\beta}(\overline{\Omega})\) if \(\|\varphi u\|_{C^{k+\beta}(\Omega)}<\infty\) for all \(\varphi\in C_{\mathrm{c}}^{\infty}(\overline{\Omega})\). We say that \(u_n\rightarrow u \) in \(C_{\mathrm{loc}}^{k+\beta}(\overline{\Omega})\) means \(\|\varphi(u_{n}-u)\|_{C^{k+\beta}(\Omega)}\to 0\) for all \(\varphi \in C_{\mathrm{c}}^{\infty}(\overline{\Omega})\). If \(u \in C^{k+\beta}(\overline{\Omega})\) and \(\|u\|_{C^{k+\beta}(\Omega)}<\infty\), then \(u\in C_{\mathrm{b}}^{k+\beta}(\overline{\Omega})\).\par
For unbounded \(\Omega\), we define \(C_{0}^{k}(\overline{\Omega}) \subseteq C_{\mathrm{b}}^{k}(\overline{\Omega})\) as
\[
C_{0}^{k}(\overline{\Omega}):=\{u \in C_{\mathrm{b}}^{k}(\overline{\Omega}): \lim _{r \to \infty} \sup _{|x|=r}|D^{j} u(x)|=0, \quad 0 \leq j \leq k\}.
\]
We also define weighted H\"{o}lder spaces allowing exponential growth in the \(x_{1}\)-direction. For \(\mu>0\), \(u\in C_{\mu}^{k+\beta}(\overline{\Omega})\) if \(\|u\|_{C_{\mu}^{k+\beta}(\Omega)}<\infty\), where
\[
\|u\|_{C_{\mu}^{k+\beta}(\Omega)}:=\sum_{|\alpha| \leq k}\|\operatorname{sech}(\mu x_{1}) \partial^{\alpha} u\|_{C^{0}(\Omega)}+\sum_{|\alpha|=k}\|\operatorname{sech}(\mu x_{1})|\partial^{\alpha} u|_{\beta}\|_{C^{0}(\Omega)}
\]
with the local H\"{o}lder seminorm
\[
|u|_\beta(x):=\sup _{|y|<1} \frac{|u(x+y)-u(x)|}{|y|^{\beta}}.
\]
\subsection{Conformal mapping}\label{sub2.1}
Introducing the conformal mapping \(X + iY = \xi(x, y) + i\eta(x, y)\), the physical fluid domain \(\Omega\) is transformed into the rectangular strip
\[
\mathcal{R} = \{(x, y) \in \mathbb{R}^2 : 0 < y < d\}.
\]
We denote the upper and lower boundaries of $\mathcal{R}$ by
\[
\Gamma = \{(x, y) \in \mathbb{R}^2 : y = d\}, \quad \mathcal{B} = \{(x, y) \in \mathbb{R}^2 : y = 0\},
\]
see Figure \ref{025figure2}.
\begin{figure}[htbp]
\centering
\includegraphics[width=14cm,height=5cm]{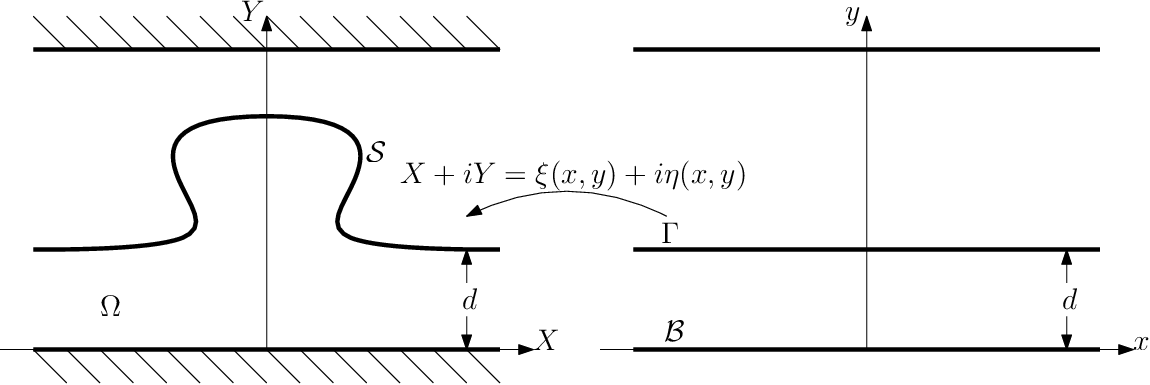}
\caption{The conformal parametrization of the fluid domain $\Omega$}
\label{025figure2}
\end{figure}
The mapping is normalized by the condition
\[
\xi_x + i \eta_x \to 1 \quad \text{as } x \to \infty.
\]
\par
We reformulate \eqref{flow filed}-\eqref{Asymptotic conditions for stream functions and potential functions} in conformal variables. For convenience, we define $\Psi, \theta: \mathcal{R}\rightarrow \mathbb{R}$ by
\begin{align}
\Psi(x, y) = \psi(\xi(x, y), \eta(x, y)), \quad
\theta(x, y) = \varphi(\xi(x, y), \eta(x, y))
\label{Conformal variable representation}
\end{align}
for all \((x, y) \in \mathcal{R}\), and introduce $\zeta, \vartheta: \mathcal{R} \rightarrow \mathbb{R}$ by
\begin{align}
\zeta(x, y) = \Psi(x, y) - \frac{1}{2} \omega \eta^2(x, y),\quad\vartheta(x, y) = \theta(x, y).\label{New variable representation}
\end{align}
Then, equations \eqref{flow filed}-\eqref{Asymptotic conditions for stream functions and potential functions} would be transformed into the system
\begin{subequations}\label{Conformal mapping equations 1}
\begin{align}
\Delta \zeta = 0  &\quad\text{ in } \mathcal{R},\label{025 Conformal mapping equation a}\\
\Delta \vartheta = 0  &\quad\text{ in } \mathcal{R},\label{025 Conformal mapping equation b}\\
\zeta = m - \frac{1}{2} \omega \eta^2 &\quad\text{ on } \Gamma,\label{025 Conformal mapping equation c}\\
(\zeta_y + \omega \eta \eta_y)^2 + \epsilon_1 \vartheta_y^2 = \big((1+\epsilon_1)Q- 2g(\eta - d)\big) |\nabla \eta|^2 &\quad\text{ on } \Gamma,\label{025 Conformal mapping equation d}\\
\vartheta = \varphi_0 &\quad\text{ on } \Gamma,\label{025 Conformal mapping equation e}\\
\zeta = 0 &\quad\text{ on } \mathcal{B},\label{025 Conformal mapping equation f}\\
\vartheta = 0 &\quad\text{ on } \mathcal{B}\label{025 Conformal mapping equation g},
\end{align}
coupled with the conditions
\begin{align}
\Delta \eta = 0 &\quad\text{ in } \mathcal{R},\label{Conformal mapping equation 2}\\
\eta = 0 &\quad\text{ on } \mathcal{B}\label{Conformal mapping equation 3}.
\end{align}
The asymptotic conditions become
\begin{align}
\lim_{x \to \pm \infty} \zeta(x, y) = (m - \frac{1}{2} \omega d^2) \frac{y}{d}, \quad
\lim_{x \to \pm \infty} \eta(x, y) = y, \quad
\lim_{x \to \pm \infty} \vartheta(x, y) = \varphi_0\frac{y}{d},\label{Conformal mapping equations 4}
\end{align}
uniformly in \( y \).
\end{subequations}
To derive \eqref{025 Conformal mapping equation d} on the left-hand side, we use
\begin{align}
U\eta_x-V\xi_x=0,\quad E_1\xi_x+E_2\eta_x=0,\notag
\end{align}
where $U, V, E_1, E_2$ are given in \eqref{flow filed} and \eqref{electric filed}, see also \cite{SVHMHW2023, HMJ2013} for related formulations.
\subsection{Non-dimensionalization}
In this subsection, we use rescaling technique to reduce (\ref{Conformal mapping equations 1}) to a single-parameter problem, where the parameter $\alpha$ and its critical value $\alpha_{\mathrm{cr}}$ are defined as
\allowdisplaybreaks
\begin{align}
\alpha=\frac{1}{F^{2}}, \quad \alpha_{\mathrm{cr}}=\frac{1}{F_{\mathrm{cr}}^{2}}=1-\gamma +\epsilon_1,\notag
\end{align}
with $\gamma$ being a dimensionless measure of the vorticity $\omega$. A detailed derivation of $\alpha_{\mathrm{cr}}$ is provided in subsection \ref{025subsection5.4}.\par
 From \eqref{Q and F relationship}, we take $d$ as the length scale and $F \sqrt{g d}$ as the velocity scale. For simplicity, we use dimensionless variables $\bar{\eta}$, $\bar{\zeta}$, and $\bar{\vartheta}$, omitting the tildes. With this scaling, \eqref{025 Conformal mapping equation a}-\eqref{Conformal mapping equation 3} would be transformed into their dimensionless form
\begin{subequations}\label{yuNon-dimensionalization equations 1}
\begin{align}
\Delta\eta=0&\quad\text{ in } \mathcal{R},\label{yuNon-dimensionalization equation a}\\
\Delta\zeta=0&\quad\text{ in } \mathcal{R},\label{yuNon-dimensionalization equation b}\\
\Delta\vartheta=0&\quad\text{ in } \mathcal{R},\label{yuNon-dimensionalization equation c}\\
\zeta=1-\frac{1}{2}\gamma-\frac{1}{2} \gamma\eta^{2}&\quad\text{ on } \Gamma,\label{yuNon-dimensionalization equation d}\\
{(\zeta_y+\gamma\eta\eta_y)}^{2}+\epsilon_1\vartheta_y^2=\big(1+\epsilon_1-2\alpha(\eta-1)\big)|\nabla\eta|^{2}&\quad\text{ on } \Gamma,\label{yuNon-dimensionalization equation e}\\
\vartheta=1&\quad\text{ on } \Gamma,\label{yuNon-dimensionalization equation f}\\
\eta=0 &\quad\text{ on } \mathcal{B},\label{yuNon-dimensionalization equation g}\\
\zeta=0&\quad\text{ on } \mathcal{B},\label{yuNon-dimensionalization equation h}\\
\vartheta=0&\quad\text{ on } \mathcal{B}.\label{yuNon-dimensionalization equation i}
\end{align}
\end{subequations}
From \eqref{yuNon-dimensionalization equation c}, \eqref{yuNon-dimensionalization equation f}, and \eqref{yuNon-dimensionalization equation i}, it follows that $\vartheta\equiv y$, a relation that directly impacts the structure of \eqref{yuNon-dimensionalization equation e}. Consequently, we rewrite \eqref{yuNon-dimensionalization equations 1} as
\begin{subequations}\label{Non-dimensionalization equations 1}
\begin{align}
\Delta\eta=0&\quad\text{ in } \mathcal{R},\label{Non-dimensionalization equation a}\\
\Delta\zeta=0&\quad\text{ in } \mathcal{R},\label{Non-dimensionalization equation b}\\
\zeta=1-\frac{1}{2}\gamma-\frac{1}{2} \gamma\eta^{2}&\quad\text{ on } \Gamma,\label{Non-dimensionalization equation d}\\
{(\zeta_y+\gamma\eta\eta_y)}^{2}+\epsilon_1=\big(1+\epsilon_1-2\alpha(\eta-1)\big)|\nabla\eta|^{2}&\quad\text{ on } \Gamma,\label{Non-dimensionalization equation e}\\
\eta=0 &\quad\text{ on } \mathcal{B},\label{Non-dimensionalization equation g}\\
\zeta=0&\quad\text{ on } \mathcal{B},\label{Non-dimensionalization equation h}\\
\vartheta&\equiv y,\label{Non-dimensionalization equation f add}
\end{align}
we additionally assume that the regularity condition
\begin{align}
\eta, \zeta \in C_{\mathrm{b}}^{3+\beta}(\overline{\mathcal{R}}),\label{Non-dimensionalization equation 2}
\end{align}
and the symmetry condition
\begin{align}
\eta\text { and }\zeta \text { are even in } x.\label{Non-dimensionalization equation 3}
\end{align}
The asymptotic conditions \eqref{Conformal mapping equations 4} turn into
\[
\eta = y, \quad \zeta = (1 - \gamma) y.
\]\par
We define
\begin{align}
w= (w_1, w_2), \quad w_1 = \eta - y, \quad w_2 = \zeta - (1 - \gamma)y,
\label{w equations 1}
\end{align}
 and require
\begin{align}
w \in C_0^2(\overline{\mathcal{R}})\label{w equation 2}.
\end{align}
Moreover, we suppose that
\begin{align}
\inf_{\Gamma} (1 +\epsilon_1- 2\alpha (\eta - 1)) |\nabla \eta|^2>\epsilon_1,\quad \inf_{\mathcal{R}} (1 +\epsilon_1- 2\alpha (\eta - 1))^2 |\nabla \eta|^2> 0.\label{Regional boundary condition}
\end{align}
\end{subequations}
The nonvanishing of $(1 +\epsilon_1- 2\alpha (\eta - 1)) |\nabla \eta|^2-\epsilon_1$ prevents the wave from reaching its greatest possible height, and $\nabla \eta$ being nonzero ensures the conformal mapping remains non-degenerate---specifically, that the free surface does not shrink to a point.
The first inequality provides a sufficient condition for proving the nodal properties. The second inequality provides a basis for the later application of linear Schauder estimates, where the first factor not vanishing implies that we cannot have an equilibrium stagnation.

\subsection{Formulation of the velocity field and electric field}
The velocity components $\psi_Y = U$ and $\psi_X = -V$ are expressed in terms of conformal variables $x$ and $y$ by
$$
u(x, y) := U(\xi(x, y), \eta(x, y)) \quad \text{and} \quad v(x, y) := V(\xi(x, y), \eta(x, y)).
$$
Similarly, the electric field components $\varphi_X =E_1$ and $\varphi_Y = E_2$ are represented as
$$
e_1(x, y) := E_1(\xi(x, y), \eta(x, y)) \quad \text{and} \quad {e_2}(x, y) := E_2(\xi(x, y), \eta(x, y)).
$$
Using the chain rule along with \eqref{Conformal variable representation} and \eqref{New variable representation}, we derive from \eqref{Non-dimensionalization equation f add} that
\begin{equation}
\begin{split}
(u, v) &= \left( \frac{\eta_x \zeta_x + \eta_y \zeta_y}{\eta_x^2 + \eta_y^2} + \gamma \eta,\; \frac{\eta_x \zeta_y - \eta_y \zeta_x}{\eta_x^2 + \eta_y^2} \right), \\
({e_1}, {e_2}) &=\left(\frac{\eta_y \vartheta_x-\eta_x \vartheta_y}{\eta_x^2 + \eta_y^2},\; \frac{\eta_x \vartheta_x + \eta_y \vartheta_y}{\eta_x^2 + \eta_y^2}\right)=\left(\frac{-\eta_x}{\eta_x^2 + \eta_y^2},\; \frac{\eta_y}{\eta_x^2 + \eta_y^2}\right).
\end{split} \label{Velocity field and electric field}
\end{equation}
The condition \eqref{Regional boundary condition} ensures that the denominator in \eqref{Velocity field and electric field} is well-defined. Moreover, both $\zeta_y + i\zeta_x$ and $\eta_y + i\eta_x$ are holomorphic functions. Then
\begin{align}
\frac{\zeta_y+i\zeta_x}{\eta_y+i\eta_x}=
\frac{\eta_{x} \zeta_{x}+\eta_{y} \zeta_{y}}{\eta_{x}^{2}+\eta_{y}^{2}}
+i\frac{\eta_{y} \zeta_{x}-\eta_{x} \zeta_{y}}{\eta_{x}^{2}+\eta_{y}^{2}}\notag
\end{align}
is a holomorphic function, its real and imaginary parts are harmonic. Therefore, the velocity field components $u$ and $v$ are harmonic functions in $\mathcal{R}$. Similarly,
\begin{align}
\frac{1}{\eta_y+i\eta_x}=
\frac{\eta_{y}}{\eta_{x}^{2}+\eta_{y}^{2}}
+i\frac{-\eta_{x}}{\eta_{x}^{2}+\eta_{y}^{2}}\notag
\end{align}
is a holomorphic function. Then, the electric field components $e_1$ and $e_2$ are harmonic functions in $\mathcal{R}$.\par
It is straightforward to verify that the functions \(u, v\) and \({e_1}, {e_2}\) satisfy
\begin{subequations}\label{Velocity field and electric field equations 1}
\begin{align}
u_x+v_y=\gamma\eta_x &\quad\text{ in } \mathcal{R},\label{Velocity field and electric field equation a}\\
{e_1}_y-{e_2}_x=0 &\quad\text{ in } \mathcal{R},\label{Velocity field and electric field equation b}\\
u_y-v_x=\gamma\eta_y &\quad\text{ in } \mathcal{R},\label{Velocity field and electric field equation c}\\
{e_1}_x+{e_2}_y=0 &\quad\text{ in } \mathcal{R},\label{Velocity field and electric field equation d}\\
u\eta_x-v\eta_y=0 &\quad\text{ on } \Gamma,\label{Velocity field and electric field equation e}\\
{e_1}\eta_y+{e_2}\eta_x=0 &\quad\text{ on } \Gamma,\label{Velocity field and electric field equation f}\\
u^2+v^2+\epsilon_1({e_1}^2+{e_2}^2)+2\alpha (\eta-1)=1+\epsilon_1&\quad\text{ on }\Gamma,\label{Velocity field and electric field equation g}\\
v=0&\quad\text{ on }\mathcal{B},\label{Velocity field and electric field equation h}\\
{e_1}=0&\quad\text{ on }\mathcal{B}.\label{Velocity field and electric field equation i}
\end{align}
The asymptotic behavior of the velocity field and the electric field are given by
\begin{align}
\lim_{x \to \pm \infty} u = (1 - \gamma) + \gamma y, \quad \lim_{x \to \pm \infty} v = 0, \quad \lim_{x \to \pm \infty} {e_1} = 0, \quad \lim_{x \to \pm \infty} {e_2} = 1. \label{Velocity field and electric field equations 2}
\end{align}
From the regularity condition \eqref{Non-dimensionalization equation 2}, it follows that
\begin{align}
u, v, {e_1}, {e_2} \in C_{\mathrm{b}}^{2+\beta}(\overline{\mathcal{R}}), \label{Velocity field and electric field equations 3}
\end{align}
and from \eqref{Non-dimensionalization equation 3}, we have
\begin{align}
u, e_2 \text{ are even in } x, \quad v, {e_1}\text{ are odd in } x. \label{Velocity field and electric field equations 4}
\end{align}
\end{subequations}
\begin{rmk}\label{025remk2.1}
From \eqref{Regional boundary condition} and \eqref{Velocity field and electric field}, it follows that
$$ 1+\epsilon_1-2\alpha(\eta-1)>\frac{\epsilon_1}{|\nabla\eta|^2}=\epsilon_1(e_1^2+e_2^2).
$$
Consequently, by \eqref{Velocity field and electric field equation g}, we conclude that $u$ and $v$ do not vanish simultaneously at any point on $\Gamma$. This non-vanishing property of the velocity field plays a crucial role in the derivation of the nodal properties.
\end{rmk}

\subsection{Statement of main results}\label{025section2.4}
The main theorem of the paper is the following existence result for large-amplitude solitary electrohydrodynamic waves, which allows for internal stagnation points and overhanging profiles.
\begin{theorem} \label{006theorem7.9ABOVE}
Fix the gravitational constant \(g > 0\), the asymptotic depth \(d > 0\), $\gamma<0$ and permittivity $\epsilon_1 > 0$. Then there exists a global continuous curve \(\mathscr{C}\) of solutions to \eqref{flow filed}-\eqref{Asymptotic conditions for stream functions and potential functions}, parameterized by \(s \in (0, \infty)\). Moreover, one of the following asymptotic property holds along \(\mathscr{C}\) as \(s \to \infty\):

(i) (Equilibrium stagnation) $\inf_{\Gamma} \bigl(1+\epsilon_1 - \frac{2}{F^2} \frac{\eta - d}{d} \bigr)\longrightarrow 0$; or

(ii) $\inf_{\Gamma}|\nabla\eta(s)|\rightarrow 0$, the conformal transformation of variables degenerates and the free surface may locally shrink to a point, leading to a singularity; or

(iii) (Stagnation) $\inf_\Gamma \biggl(\bigl(1 + \epsilon_1 - \frac{2}{F^2} \frac{\eta - d}{d}\bigr) |\nabla \eta(s)|^2-\epsilon_1\biggr)\longrightarrow 0$; or

(iv) $\frac{1}{F(s)}\rightarrow 0$, the dimensionless wave speed tends toward infinity.
\end{theorem}

\begin{rmk}	
Let us briefly explain the alternative (i) and (iii) in Theorem \ref{006theorem7.9ABOVE}. If the Froude number $F$ remains bounded,

(i)
if $\inf_{\Gamma} \big( 1 + \epsilon_1 - \frac{2}{F^2} \frac{\eta - d}{d} \big) \to 0$,
it follows from \eqref{Velocity field and electric field equation g} that
$$
\inf_{\Gamma} \left(1 + \epsilon_1 - \frac{2}{F^2} \frac{\eta - d}{d}\right) =\inf_{\Gamma} \left( \frac{U^2+V^2 + \epsilon_1({E_1}^2+{E_2}^2)}{F^2 g d}\right) \rightarrow 0.
$$
This condition implies that $U \to 0, V \to 0$ and $E_1 \to 0, E_2 \to 0$, that is to say, an equilibrium stagnation point occurs.

(iii)
if $\inf_\Gamma \biggl(\bigl(1 + \epsilon_1 - \frac{2}{F^2} \frac{\eta - d}{d}\bigr) |\nabla \eta(s)|^2-\epsilon_1\biggr)\to 0$, it follows from \eqref{Non-dimensionalization equation e} that
$$\inf_\Gamma \biggl(\bigl(1 + \epsilon_1 - \frac{2}{F^2} \frac{\eta - d}{d}\bigr) |\nabla \eta(s)|^2-\epsilon_1\biggr)=\inf_\Gamma\bigl(\frac{U^2}{F^2 g d}\bigr)\longrightarrow 0.$$
This condition implies that the fluid approaches stagnation $(U=V=0)$ at the crest.
\end{rmk}
We also derive several qualitative properties of solitary electrohydrodynamic waves. These results are not only essential for proving Theorem \ref{006theorem7.9ABOVE}, but are also of independent mathematical interest. Below, we emphasize the two most important findings, additional properties are discussed in Section \ref{025section5}.\par
The first key result demonstrates the nonexistence of monotone bores.
An electrohydrodynamic bore is a type of traveling wave that asymptotically tends different laminar flows as \( x \to \pm\infty \), see Figure \ref{025figure3}. In previous studies on solitary waves, bores have often been considered as an alternative limiting behavior to extreme waves. This behavior is commonly associated with a loss of compactness. In this work, we aim to rule out the existence of such bores.
\par
\begin{figure}[htbp]
\centering
\includegraphics[width=7.6cm,height=4.4cm]{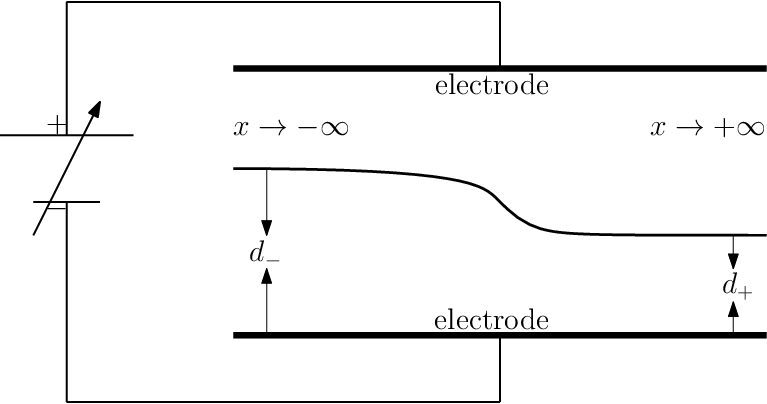}
\caption{An electrohydrodynamic bore, with distinct asymptotic velocities, electric fields and depths at $x \to \pm\infty$.}
\label{025figure3}
\end{figure}
An electrohydrodynamic bore is defined as a solution \((\eta, \zeta, \vartheta, \alpha)\) to equations \eqref{Non-dimensionalization equation a}-\eqref{Non-dimensionalization equation 2} that satisfies the asymptotic condition
\begin{align}
\lim_{x \to \pm \infty} (\eta, \zeta, \vartheta)(x, y) = (\eta_\pm(y), \zeta_\pm(y), \vartheta_\pm(y)), \label{bores asymptotic condition}
\end{align}
where the limiting profiles \((\eta_-, \zeta_-, \vartheta_-)\) and \((\eta_+, \zeta_+, \vartheta_+)\) are distinct. By a translation argument, we observe that these limits must also solve \eqref{Non-dimensionalization equation a}-\eqref{Non-dimensionalization equation 2}. Therefore, they take the form
\[
\eta_{\pm}(y) = \hat{\eta}_{\mathrm{tr}}(y; d_{\pm}), \quad \zeta_{\pm}(y) = \hat{\zeta}_{\mathrm{tr}}(y; d_{\pm}), \quad \vartheta_{\pm}(y) = \hat{\vartheta}_{\mathrm{tr}}(y; d_{\pm}),
\]
for constants \( d_- \neq d_+ \), where
\begin{equation}
\begin{split}
\hat{\eta}_{\mathrm{tr}}(y; d) & := dy, \\
\hat{\zeta}_{\mathrm{tr}}(y; d) & := \big(\frac{2 - \gamma}{2d} - \frac{\gamma d}{2}\big) dy, \\
\hat{\vartheta}_{\mathrm{tr}}(y; d) & :=\frac{1}{d}dy=y.
\end{split} \label{0063.12}
\end{equation}
It is crucial to note that the solutions of \eqref{Non-dimensionalization equations 1} satisfy \eqref{bores asymptotic condition}-\eqref{0063.12} with \( d_+ = d_- = 1 \).
\par
Moreover, since \eqref{0063.12} satisfies the dynamic boundary condition \eqref{Non-dimensionalization equation e}, it follows that
\begin{align}
\hat{Q}(d_{-}) = \hat{Q}(d_{+}) = \hat{Q}(1)=1+\epsilon_1, \label{0063.13}
\end{align}
where the function \(\hat{Q}(d)\) is defined by
\begin{align}
\hat{Q}(d) := \frac{1}{d^2}\left( \frac{2 - \gamma}{2} + \frac{\gamma d^2}{2}\right)^2 +\frac{1}{d^2}\epsilon_1+ 2\alpha (d - 1). \label{0063.14}
\end{align}
We introduce the function
\begin{align}
\hat{S}(d) := S\big(x; (\hat{\eta}_{\mathrm{tr}}(\cdot; d), \hat{\zeta}_{\mathrm{tr}}(\cdot; d), \hat{\vartheta}_{\mathrm{tr}}(\cdot; d))\big), \label{0063.15}
\end{align}
where \( S \) denotes the flow force as given in \eqref{0063.4}. The invariance of the flow force implies
\begin{align}
\hat{S}(d_{-}) = \hat{S}(d_{+}) = \hat{S}(1). \label{0063.16}
\end{align}
Equations \eqref{0063.13} and \eqref{0063.16} are known as the conjugate flow conditions.
\begin{theorem}[Nonexistence of bore solutions]\label{006theorem3.4}
The conjugate flow equations \eqref{0063.13} and \eqref{0063.16} have no solutions other than \(d = 1\). In particular, the system defined by \eqref{Non-dimensionalization equation a}-\eqref{Non-dimensionalization equation 2} does not admit bore solutions as defined in \eqref{bores asymptotic condition}-\eqref{0063.12}.
\end{theorem}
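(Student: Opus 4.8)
\emph{Proof proposal.} The plan is to convert the conjugate flow equations \eqref{0063.13} and \eqref{0063.16} into a pair of polynomial relations in the unknown asymptotic depth \(d\) and then to exclude every root except \(d=1\). Since the asymptotic states of any bore solve \eqref{Non-dimensionalization equation a}--\eqref{Non-dimensionalization equation 2} and hence are laminar flows of the form \eqref{0063.12}, while \eqref{0063.13} and \eqref{0063.16} say that \(d_-\) and \(d_+\) both satisfy \(\hat Q(d)=\hat Q(1)\) and \(\hat S(d)=\hat S(1)\), the statement reduces to showing that \(d=1\) is the unique positive solution of this two-equation system. \textbf{Step 1 (the Bernoulli condition).} Because \(\hat Q(1)=1+\epsilon_1\), the depth \(d=1\) always solves \(\hat Q(d)=1+\epsilon_1\); multiplying \eqref{0063.14} by \(d^{2}\) and using \(\tfrac{2-\gamma}{2}+\tfrac{\gamma}{2}=1\), one factors
\[
\hat Q(d)-(1+\epsilon_1)=\frac{d-1}{d^{2}}\,g_\alpha(d),
\]
where \(g_\alpha(d):=\tfrac{\gamma^{2}}{4}d^{3}+\big(\tfrac{\gamma^{2}}{4}+2\alpha\big)d^{2}-cd-c\) and \(c:=\tfrac{(2-\gamma)^{2}}{4}+\epsilon_1>0\) (for \(\gamma=0\) this degenerates to the quadratic \(2\alpha d^{2}-cd-c\)). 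A conjugate depth \(d\neq1\) must therefore be a positive root of \(g_\alpha\); but \(\alpha=1/F^{2}>0\) and \(c>0\), so by Vieta's formulas the roots of \(g_\alpha\) have negative sum and positive product, which forces \(g_\alpha\) to have exactly one positive real root. Hence \(\hat Q(\cdot)=1+\epsilon_1\) admits at most the two positive solutions \(d=1\) and one further \(d_0\neq1\) (if no such \(d_0\) exists, a bore is impossible and we are done); in the remaining case a bore must have \(\{d_-,d_+\}=\{1,d_0\}\), and \(g_\alpha(d_0)=0\) expresses \(\alpha\) as an explicit rational function \(\alpha=\alpha_\ast(d_0)\) of \(d_0\).

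\textbf{Step 2 (the flow force condition).} Evaluating the flow force \eqref{0063.4} on the laminar profile \eqref{0063.12} yields \(\hat S(d)\) as an explicit rational function of \(d\), with coefficients depending on \(\gamma\), \(\epsilon_1\) and \(\alpha\); here the laminar velocity, pressure and electric fields are all explicit from \eqref{0063.12}. Since \(d=1\) is again a root of \(\hat S(d)-\hat S(1)\), clearing denominators, dividing by \(d-1\), and substituting \(\alpha=\alpha_\ast(d)\) from Step 1 turns \(\hat S(d)=\hat S(1)\) into a single polynomial equation \(N(d)=0\) whose coefficients are polynomials in \(\gamma\) and \(\epsilon_1\) alone. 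Thus a bore exists only if \(N\) has a root \(d_0>0\) with \(d_0\neq1\).

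\textbf{Step 3 (positivity of \(N\)).} The remaining, and decisive, task is to show that \(N(d)\neq0\) for all \(d>0\). I expect \(N\) to admit a manifestly sign-definite representation — a combination of positive powers of \(d\), of \((d-1)^{2}\) and of \(\epsilon_1\), all with coefficients of one sign — so that \(N\) keeps a fixed sign on \((0,\infty)\); it is likely cleanest to treat the cases \(\gamma\le0\) and \(\gamma>0\) separately, and the permittivity \(\epsilon_1>0\) should enter the estimate in an essential way, since for \(\epsilon_1=0\) the computation collapses to the classical rotational water-wave case. This is the main obstacle: \(N\) is a genuine two-parameter polynomial in \((\gamma,\epsilon_1)\), and one must find a grouping valid uniformly in these parameters while using the admissibility constraints \(d>0\) and \(\alpha>0\) to discard spurious algebraic roots. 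Once \(N\) has been shown to be of one sign on \((0,\infty)\setminus\{1\}\), the conjugate flow equations force \(d=1\); in particular \(d_-=d_+=1\) for every bore, contradicting the requirement in \eqref{bores asymptotic condition} that the two limiting profiles be distinct, which completes the proof.
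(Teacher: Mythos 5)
Your Steps 1 and 2 are a correct reduction: the factorization
\[
\hat Q(d)-(1+\epsilon_1)=\frac{d-1}{d^{2}}\Big(\tfrac{\gamma^{2}}{4}d^{3}+\big(\tfrac{\gamma^{2}}{4}+2\alpha\big)d^{2}-cd-c\Big),\qquad c=\tfrac{(2-\gamma)^2}{4}+\epsilon_1,
\]
checks out, and the Vieta argument does show the Bernoulli equation has at most one positive root $d_*\neq 1$. But Step 3 is not a proof; you yourself flag it as ``the main obstacle'' and then only conjecture that the clearing-denominators polynomial $N(d)$ admits a sign-definite grouping. That is exactly the computation the theorem requires, and leaving it as a plausibility statement leaves the argument incomplete. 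Brute-forcing such a grouping in the two parameters $(\gamma,\epsilon_1)$ while also using the constraint $\alpha=\alpha_\ast(d)>0$ is genuinely messy, and it is not at all obvious that it succeeds without a further idea.

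The paper sidesteps the entire polynomial analysis by an algebraic identity you did not find: differentiating the explicit formula for $\hat S(d)$ and comparing with \eqref{0063.14} gives
\[
\hat S'(d)=\tfrac12\big(\hat Q(1)-\hat Q(d)\big),
\]
(Lemma \ref{006lemma3.3}). Together with the strict convexity of $\hat Q$ (Lemma \ref{006lemma3.2}, which follows from $\hat Q''(d)=\frac{3(2-\gamma)^2}{2d^4}+\frac{\gamma^2}{2}+\frac{6\epsilon_1}{d^4}>0$ and does not even require $\alpha>0$), this gives
\[
\hat S(d_*)-\hat S(1)=\tfrac12\int_1^{d_*}\big(\hat Q(1)-\hat Q(s)\big)\,ds,
\]
and the integrand has a fixed strict sign on the open interval between $1$ and $d_*$ because $\hat Q$ is strictly convex and equals $\hat Q(1)$ at both endpoints. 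Hence $\hat S(d_*)\neq\hat S(1)$, and the conjugate-flow system has no solution other than $d=1$. No root-bounding, no case split in $\gamma$, no search for sign-definite polynomial groupings. Your Step 1 uniqueness claim is subsumed by the convexity of $\hat Q$, so if you adopt the paper's Lemma \ref{006lemma3.2} you may drop the Vieta argument as well. The lesson is to look for a structural relation between $\hat S$ and $\hat Q$ before attempting to verify the flow-force condition by direct polynomial manipulation.
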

The second main result provides lower bound estimates for the Froude number.
\begin{theorem}\label{006theorem3.8}
Let \((\eta, \zeta, \vartheta, \alpha)\) be a solution to \eqref{Non-dimensionalization equations 1} with \(\alpha > 0\). If \(\eta \geq 1\) on the surface \(\Gamma\), then either
\begin{align}
\alpha = \frac{1}{F^{2}} < 1 - \gamma + \epsilon_1 \notag
\end{align}
holds, or the solution is trivial.
\end{theorem}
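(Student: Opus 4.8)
The plan is to argue by contradiction: suppose $(\eta,\zeta,\vartheta,\alpha)$ is a nontrivial solution with $\alpha>0$, $\eta\ge 1$ and $\eta\not\equiv 1$ on $\Gamma$, and suppose moreover that $\alpha \ge 1-\gamma+\epsilon_1 = \alpha_{\mathrm{cr}}$; I will derive a contradiction. The natural object to exploit is the harmonic function $w_1 = \eta - y$ on the strip $\mathcal R$, which satisfies $w_1 = 0$ on $\mathcal B$ by \eqref{Non-dimensionalization equation g} and $w_1 \to 0$ as $x\to\pm\infty$ by \eqref{w equation 2}, so $w_1 \in C_0^2(\overline{\mathcal R})$. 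The hypothesis $\eta \ge 1$ on $\Gamma$ says $w_1 \ge 1 - d = 0$ on $\Gamma$ (recall $d=1$ after non-dimensionalization), so $w_1 \ge 0$ on the whole boundary of $\mathcal R$, hence $w_1 \ge 0$ in $\mathcal R$ by the maximum principle; since $\eta\not\equiv 1$ on $\Gamma$ and $w_1$ is not identically zero, the strong maximum principle gives $w_1 > 0$ in the interior and, at any boundary maximum on $\Gamma$, the Hopf lemma gives $\partial_y w_1 = \eta_y - 1 < 0$ there — but I actually want information valid along all of $\Gamma$, so instead I will use the Hopf/boundary-point structure more carefully together with the dynamic boundary condition.

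The key step is to feed this sign information into the dynamic boundary condition \eqref{Velocity field and electric field equation g}, namely $u^2 + v^2 + \epsilon_1(e_1^2 + e_2^2) + 2\alpha(\eta - 1) = 1 + \epsilon_1$ on $\Gamma$. Since $\eta \ge 1$ on $\Gamma$, the term $2\alpha(\eta-1) \ge 0$, so $u^2 + v^2 + \epsilon_1(e_1^2 + e_2^2) \le 1 + \epsilon_1$ on $\Gamma$, with equality forced wherever $\eta = 1$. I would then compare this with the asymptotic values \eqref{Velocity field and electric field equations 2}: as $x\to\pm\infty$ on $\Gamma$ (where $y=d=1$), $u \to (1-\gamma) + \gamma\cdot 1 = 1$, $v\to 0$, $e_1\to 0$, $e_2\to 1$, so $u^2+v^2+\epsilon_1(e_1^2+e_2^2)\to 1+\epsilon_1$ at infinity. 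The plan is to show that $u$, or more precisely a suitable harmonic combination, attains a strict extremum somewhere, contradicting the maximum principle unless the solution is trivial. Concretely: $v$ is harmonic in $\mathcal R$, odd in $x$, vanishes on $\mathcal B$ by \eqref{Velocity field and electric field equation h}, and tends to $0$ at infinity; combined with \eqref{Velocity field and electric field equation e} ($u\eta_x = v\eta_y$ on $\Gamma$) and the monotonicity $\eta_x(x,d)<0$ for $x>0$ that is part of the solution set, one reads off the sign of $v$ on $\Gamma$ from the sign of $u$, i.e. the sign of the tangential velocity. The real engine is the auxiliary harmonic function measuring the defect in Bernoulli's relation: set $h := 1 + \epsilon_1 - 2\alpha(\eta-1) - \big(u^2+v^2+\epsilon_1(e_1^2+e_2^2)\big)$-type quantity, but since the squares of harmonic functions are subharmonic I prefer to work directly with $\eta$ (harmonic) and exploit that $2\alpha(\eta-1)\le$ (difference of bounded terms controlled at infinity). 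I expect the clean route is: from $u^2+v^2+\epsilon_1(e_1^2+e_2^2)\le 1+\epsilon_1$ on $\Gamma$ and the inequality $\alpha\ge\alpha_{\mathrm{cr}}=1-\gamma+\epsilon_1$, Taylor-expand or estimate near the laminar flow to show that the linearization forces $\eta\equiv 1$, using that $\alpha_{\mathrm{cr}}$ is precisely the threshold at which the linearized problem around the trivial flow degenerates (as promised in subsection \ref{025subsection5.4}).

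The main obstacle I anticipate is handling the coupling between the three harmonic functions $\eta$, $\zeta$, $\vartheta$ through the single nonlinear boundary relation \eqref{Velocity field and electric field equation g}: unlike the gravity water-wave case there is an extra $\epsilon_1(e_1^2+e_2^2)$ term, and one must verify that this term does not have the "wrong sign" in the relevant estimate. I would control it by noting from \eqref{025 Conformal mapping equation d}/\eqref{Non-dimensionalization equation e} that $e_1^2 + e_2^2$ on $\Gamma$ equals $\vartheta_y^2/|\nabla\eta|^2$ up to the conformal factor, and that $\vartheta$ itself is pinned by its harmonicity plus Dirichlet data $\vartheta = 1$ on $\Gamma$, $\vartheta=0$ on $\mathcal B$ — so $w_3 = \vartheta - y$ is harmonic, vanishes on $\mathcal B$, and on $\Gamma$ equals $1 - \eta$'s relation... more precisely $\vartheta = 1$ and $y=d=1$ there so $w_3 = 0$ on $\Gamma$ as well when $d=1$, giving $w_3\equiv 0$, i.e. $\vartheta = y$ exactly. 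That collapses the electric contribution to an explicit function of $\eta$ only, after which the argument reduces to a one-unknown maximum-principle comparison essentially identical to the pure water-wave lower-bound-on-Froude argument (cf. the analogous step in \cite{RMCSWMHW2018, SVHMHW2023}), and the threshold $\alpha<\alpha_{\mathrm{cr}}$ emerges from requiring a nontrivial bounded solution of the resulting inequality.
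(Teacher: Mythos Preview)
Your observation that $\vartheta\equiv y$ (i.e.\ $w_3\equiv 0$) is correct and useful, and you are right that this collapses the electric contribution to an explicit term. However, the proposal has a genuine gap at the decisive step: you describe the endgame as ``a one-unknown maximum-principle comparison essentially identical to the pure water-wave lower-bound-on-Froude argument'' in \cite{RMCSWMHW2018,SVHMHW2023}, but that is not how the bound is obtained in those references, nor in this paper. The inequality $u^2+v^2+\epsilon_1(e_1^2+e_2^2)\le 1+\epsilon_1$ on $\Gamma$ that you extract from $\eta\ge 1$ holds for \emph{every} $\alpha>0$, so by itself it cannot distinguish $\alpha<\alpha_{\mathrm{cr}}$ from $\alpha\ge\alpha_{\mathrm{cr}}$; and your suggestion to ``Taylor-expand or estimate near the laminar flow'' cannot close the argument because the solution is not assumed small.

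The paper's proof (following \cite{SVHMHW2023}) is an \emph{integral identity}, not a pointwise maximum-principle comparison. One introduces the flow-force-type harmonic function $\Phi$ in \eqref{0063.23}, multiplies $\Delta\Phi=0$ by $y$, integrates by parts over $(-M,M)\times(0,1)$, and after using the boundary conditions and Gauss--Green identities obtains (see \eqref{0063.25})
\[
(1-\gamma+\epsilon_1-\alpha)\int_{-M}^{M} w_1\,dx
= \alpha\!\int_{-M}^{M}\! w_1 w_{1y}\,dx
+ \tfrac{\alpha+\gamma^2}{2}\!\int_{-M}^{M}\! w_1^2\,dx
+ \tfrac{\gamma^2}{6}\!\int_{-M}^{M}\! w_1^3\,dx + o(1).
\]
Since $w_1\ge 0$, $w_1\not\equiv 0$, and $\int w_1 w_{1y}\,dx = \int\!\!\int |\nabla w_1|^2 + o(1) > 0$, the right side is strictly positive for large $M$, forcing $1-\gamma+\epsilon_1-\alpha>0$. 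Your proposal lacks this flow-force identity entirely; without it there is no visible mechanism by which the specific threshold $\alpha_{\mathrm{cr}}=1-\gamma+\epsilon_1$ would emerge.
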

\begin{rmk}
To the best of our knowledge, this is the first bound of its kind in electrohydrodynamic waves.
\end{rmk}
\section{Linearized operators}\label{025section3}
We rewrite \eqref{Non-dimensionalization equations 1} as a nonlinear operator equation for $w = (w_1, w_2)$ in an appropriate Banach space. This section focuses on the analysis of the linearized operator $\mathscr{F}_{w}(w, \alpha)$.
\subsection{Functional analytic formulation}\label{025section3.1}
Define the Banach spaces
\begin{align}
&\mathcal{X} = \{ w \in (C_{\mathrm{b}}^{3+\beta}(\overline{\mathcal{R}})\cap C_{0}^{2}(\overline{\mathcal{R}}))^2: \Delta w = 0 \, \text{in} \, \mathcal{R}, \, w = 0 \, \text{on} \, \mathcal{B} \},\notag\\
&\mathcal{Y} = (C_{\mathrm{b}}^{3+\beta}(\Gamma) \cap C_{0}^{2}(\Gamma)) \times (C_{\mathrm{b}}^{2+\beta}(\Gamma) \cap C_{0}^{1}(\Gamma))\notag
\end{align}
and let \(\mathcal{X} \subset \mathcal{X}_{\mathrm{b}}\) and \(\mathcal{Y} \subset \mathcal{Y}_{\mathrm{b}}\), where  $\mathcal{X}_{\mathrm{b}}$ and $\mathcal{Y}_{\mathrm{b}}$ are larger spaces whose elements do not vanish at infinity.
That is, \(\mathcal{X}_{\mathrm{b}}\) and \(\mathcal{Y}_{\mathrm{b}}\) no longer intersect with \( C_0^1 \) and \( C_0^2 \).\par
Equation \eqref{Non-dimensionalization equations 1} can be reformulated as a nonlinear operator equation
\begin{align}
\mathscr{F}(w, \alpha) = 0,\label{0253.1}
\end{align}
where the mapping \(\mathscr{F}:= (\mathscr{F}_{1}, \mathscr{F}_{2}): \mathcal{X} \times \mathbb{R} \to \mathcal{Y}\) is defined by
\begin{align}
\mathscr{F}_{1}(w, \alpha) &= (w_{2} + \gamma w_{1} + \frac{1}{2} \gamma w_{1}^{2}), \notag\\
\mathscr{F}_{2}(w, \alpha) &= \big( \gamma(w_{1} + w_{1y} + w_{1} w_{1y}) +w_{2y}+1\big)^{2}+\epsilon_1 \notag\\
&\quad\quad- (1+\epsilon_1 - 2\alpha w_{1}) \big( w_{1x}^{2} + (w_{1y} + 1)^{2} \big). \notag
\end{align}
Considering the assumption \eqref{Regional boundary condition}, we seek solutions in the open subset
\begin{align}
\mathcal{U}: = \left\{ (w, \alpha) \in \mathcal{X} \times \mathbb{R} : \alpha < 1-\gamma+\epsilon_1,\, \kappa(w, \alpha)>\epsilon_1, \, \lambda(w, \alpha) > 0 \right\} \subset \mathcal{X} \times \mathbb{R},\notag
\end{align}
where
\begin{equation}
	\begin{split}
	\kappa(w, \alpha)&:= \inf_{\Gamma} (1 +\epsilon_1- 2\alpha w_1) \big(w_{1x}^2 + (1 + w_{1y})^2\big)>\epsilon_1,\\
\lambda(w, \alpha)&:= \inf_{\mathcal{R}} 4(1 +\epsilon_1 - 2\alpha w_1)^2 \big(w_{1x}^2 + (1 + w_{1y})^2\big)> 0.
\end{split}\label{0253.2}
\end{equation}
Notably, the first inequality in \eqref{0253.2} provides a sufficient condition to prove the nodal properties, the second allows for the application of linear Schauder estimates to the linearized problem.\par
By linearizing the operator, we obtain
\begin{equation}
\begin{split}
&\mathscr{F}_{1w}(w, \alpha) \dot{w} = c_{i} \dot{w}_{i},\\
&\mathscr{F}_{2w}(w, \alpha) \dot{w} = a_{ij} \partial_{j} \dot{w}_{i} + b_{i} \dot{w}_{i},
\end{split}\label{0253.3}
\end{equation}
where \(\partial_{1} = \partial_{x}\) and \(\partial_{2} = \partial_{y}\), and the coefficients are given by
\begin{align}
&b_{1}=2\gamma (w_{1y} + 1) \big( w_{2y} + 1+ \gamma (w_{1} + w_{1y} + w_{1} w_{1y}) \big) + 2\alpha (w_{1x}^{2} + (1 + w_{1y})^{2}), \notag\\
&b_{2}=0, \notag\\
&a_{11}=-2(1+\epsilon_1  - 2\alpha  w_{1}) w_{1x}, \notag\\
&a_{21}=0, \notag\\
&a_{12}=2 \gamma\big( w_{2y} + 1 + \gamma (w_{1} + w_{1y} + w_{1} w_{1y}) \big)(1 + w_{1})-2(1 +\epsilon_1 - 2\alpha w_{1})(1 + w_{1y}),\notag\\
&a_{22}=2\big(w_{2y} + 1+ \gamma (w_{1} + w_{1y} + w_{1} w_{1y})\big), \notag\\
&c_{1}=\gamma (1 + w_{1}), \notag\\
&c_{2}=1.\notag
\end{align}
We note that \eqref{0253.3} is the type of operator considered in appendix. Linearizing about the trivial solution \(w \equiv 0\), we have
\begin{subequations}\label{0253.4}
\begin{align}
&\mathscr{F}_{1w}(0, \alpha) \dot{w} = \gamma \dot{w}_{1} + \dot{w}_{2}, \label{0253.4a}\\
&\mathscr{F}_{2w}(0, \alpha) \dot{w} = 2\big(\dot{w}_{2y} + (\gamma -1-\epsilon_1)\dot{w}_{1y} + (\gamma +\alpha) \dot{w}_{1}\big).\label{0253.4b}
\end{align}
\end{subequations}
\begin{rmk}\label{rmk1}
By \eqref{w equation 2}, the limit operator of \(\mathscr{F}_{w}(w, \alpha)\) as \(x \to \pm \infty\) is \(\mathscr{F}_{w}(0, \alpha)\).
\end{rmk}
\subsection{Local properness and invertibility properties}
We begin by showing that the linearized operator $\mathscr{F}_w(w, \alpha)$ satisfies the conditions necessary for applying Schauder estimates.
\begin{lemma}\label{006lemma5.3} For \((w, \alpha) \in \mathcal{U}\), the linearized operator \(\mathscr{F}_w(w, \alpha): \mathcal{X}_{\mathrm{b}} \to \mathcal{Y}_{\mathrm{b}}\) in \eqref{0253.3} satisfies the Schauder estimate
\[
\|\dot{w}\|_{\mathcal{X}_{\mathrm{b}}} \leq C \big( \|\mathscr{F}_{w}(w, \alpha) \dot{w}\|_{\mathcal{Y}_{\mathrm{b}}} + \|\dot{w}\|_{C^{0}(\mathcal{R})} \big),
\]
where the constant \(C\) depends on \(\|w\|_{\mathcal{X}}\), \(\alpha\) and the minor constant \(\lambda(w, \alpha)\). A more intuitive explanation of \(\lambda(w, \alpha)\) is given in \cite[Theorem A.1]{SVHMHW2023}.
\end{lemma}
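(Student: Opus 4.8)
\emph{Proof strategy.} The plan is to deduce the estimate from the general Schauder estimate for boundary operators of the form \eqref{0253.3} established in the Appendix, so that the real work is to verify the hypotheses of that result on all of \(\mathcal U\): the interior part is trivial, since the three components of \(\dot w\) are harmonic in \(\mathcal R\) and vanish on \(\mathcal B\), so only the complementing (Lopatinski--Shapiro) condition on \(\Gamma\) needs checking, together with the asserted dependence of the constant on \(\|w\|_{\mathcal X}\), \(\alpha\) and \(\lambda(w,\alpha)\). To begin, the third component of \(\mathscr{F}_w(w,\alpha)\dot w\) is the Dirichlet trace \(\dot w_3|_{\Gamma}\), so \(\dot w_3\) solves a Dirichlet problem for the Laplacian on the strip and the classical boundary Schauder estimate gives \(\|\dot w_3\|_{C^{3+\beta}_{\mathrm b}(\overline{\mathcal R})}\le C\big(\|\mathscr{F}_{3w}(w,\alpha)\dot w\|_{C^{3+\beta}_{\mathrm b}(\Gamma)}+\|\dot w_3\|_{C^0}\big)\) with \(C\) absolute; in particular \(\dot w_3\) and its normal trace \(\dot w_{3y}|_{\Gamma}\) are under control and \(\dot w_3\) decouples.

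Next I would pass to the boundary for the remaining components. Writing \(f_i:=\dot w_i|_{\Gamma}\) and letting \(\Lambda\) denote the Dirichlet--to--Neumann operator of the strip with zero data on \(\mathcal B\) --- a positive, self-adjoint, first-order pseudodifferential operator whose symbol \(\xi\coth(d\xi)\) is strictly signed for real \(\xi\neq 0\) and behaves like \(|\xi|\) at high frequency --- one has \(\dot w_{iy}|_{\Gamma}=\Lambda f_i\) because \(\dot w_i\) is the bounded harmonic extension of \(f_i\) vanishing on \(\mathcal B\). From \(\mathscr{F}_{1w}(w,\alpha)\dot w=\gamma(1+w_1)f_1+f_2=:g_1\) one solves \(f_2=g_1-\gamma(1+w_1)f_1\), while \(f_3=g_3\) and \(\dot w_{3y}|_\Gamma=\Lambda g_3\) are already known. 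Substituting these into \(\mathscr{F}_{2w}(w,\alpha)\dot w=a_{11}\partial_x f_1+a_{12}\Lambda f_1+a_{22}\Lambda f_2+a_{32}\Lambda f_3+b_1 f_1=:g_2\) and commuting \(\Lambda\) past multiplication by \(\gamma(1+w_1)\) (a commutator of order \(0\)) collapses the pair \((\mathscr{F}_{1w},\mathscr{F}_{2w})\) into a single first-order pseudodifferential equation on \(\Gamma\),
\[
\mathcal L f_1:=a_{11}\,\partial_x f_1+\big(a_{12}-\gamma(1+w_1)\,a_{22}\big)\Lambda f_1+R f_1=g_2-a_{22}\Lambda g_1-a_{32}\Lambda g_3+\tilde R(g_1,g_3),
\]
where \(R\) and \(\tilde R\) gather terms of order \(\le 0\).

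The crux is the ellipticity of \(\mathcal L\) on \(\Gamma\), and this is exactly where the structure of \(\mathcal U\) enters. A direct computation with the explicit coefficients yields the identities
\[
a_{12}-\gamma(1+w_1)\,a_{22}=-2\,(1+\epsilon_1-2\alpha w_1)(1+w_{1y}),\qquad a_{11}=-2\,(1+\epsilon_1-2\alpha w_1)\,w_{1x},
\]
so the principal symbol of \(\mathcal L\) is \(\sigma_1(\mathcal L)(x,\xi)=-2\,(1+\epsilon_1-2\alpha w_1)\big(i\,w_{1x}\,\xi+(1+w_{1y})\,\xi\coth(d\xi)\big)\). For real \(\xi\neq 0\) the bracket has imaginary part \(w_{1x}\xi\) and nonzero real-part factor \(\xi\coth(d\xi)\), hence \(\sigma_1(\mathcal L)(x,\xi)\) vanishes precisely when \(1+\epsilon_1-2\alpha w_1=0\) or \((w_{1x},1+w_{1y})=(0,0)\), i.e.\ precisely when \((1+\epsilon_1-2\alpha w_1)^2\big(w_{1x}^2+(1+w_{1y})^2\big)=0\). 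Since \((w,\alpha)\in\mathcal U\) forces this quantity to be \(\ge\lambda(w,\alpha)/4>0\) on \(\overline{\mathcal R}\), and it tends to \((1+\epsilon_1)^2>0\) as \(x\to\pm\infty\) because \(w\in C_0^2\) (cf.\ Remark \ref{rmk1}), the operator \(\mathcal L\) is uniformly elliptic of order \(1\) on \(\Gamma\), with ellipticity constant controlled by \(\lambda(w,\alpha)\) and coefficient norms by \(\|w\|_{\mathcal X}\) and \(\alpha\). I expect this identification --- that after eliminating \(f_2,f_3\) the surviving leading coefficient of the oblique condition \(\mathscr{F}_{2w}\) is exactly the Bernoulli-type quantity defining \(\lambda\) --- to be the main obstacle; the rest is bookkeeping.

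Granting ellipticity, the standard elliptic estimate for \(\mathcal L\) yields \(\|f_1\|_{C^{3+\beta}_{\mathrm b}(\Gamma)}\le C\big(\|g_1\|_{C^{3+\beta}_{\mathrm b}(\Gamma)}+\|g_2\|_{C^{2+\beta}_{\mathrm b}(\Gamma)}+\|g_3\|_{C^{3+\beta}_{\mathrm b}(\Gamma)}+\|f_1\|_{C^0}\big)\); then \(f_2=g_1-\gamma(1+w_1)f_1\in C^{3+\beta}_{\mathrm b}(\Gamma)\) is controlled, and solving the Dirichlet problems \(\Delta\dot w_i=0\) in \(\mathcal R\), \(\dot w_i=0\) on \(\mathcal B\), \(\dot w_i=f_i\) on \(\Gamma\) for \(i=1,2\) and invoking boundary Schauder estimates recovers \(\dot w_1,\dot w_2\in C^{3+\beta}_{\mathrm b}(\overline{\mathcal R})\) with the same bound. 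Combining with the estimate for \(\dot w_3\) and using \(\|f_i\|_{C^0}\le\|\dot w_i\|_{C^0(\mathcal R)}\) produces \(\|\dot w\|_{\mathcal X_{\mathrm b}}\le C\big(\|\mathscr{F}_w(w,\alpha)\dot w\|_{\mathcal Y_{\mathrm b}}+\|\dot w\|_{C^0(\mathcal R)}\big)\) with \(C=C(\|w\|_{\mathcal X},\alpha,\lambda(w,\alpha))\). The remaining routine points --- the order-\(0\) commutator bookkeeping hidden in \(R,\tilde R\), the local-to-global covering of the unbounded strip that keeps all Schauder constants uniform in \(x\) (using that the limit operator as \(x\to\pm\infty\) is \(\mathscr{F}_w(0,\alpha)\)), and the passage between traces on \(\Gamma\) and norms on \(\mathcal R\) --- are precisely what the general theorem in the Appendix is designed to absorb.
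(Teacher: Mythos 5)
Your proposal is correct, and the crux you identify coincides exactly with the paper's computation. The paper's proof is a one-paragraph application of Lemma~\ref{006LemmaA.1}: plug the coefficients from Subsection~3.1 into the two Lopatinskii combinations, observe that all terms with $a_{21},a_{31},c_3,d_1,d_2,b_2,b_3$ drop out (these all vanish), so the first combination collapses to $a_{11}=-2(1+\epsilon_1-2\alpha w_1)w_{1x}$ and the second to $a_{12}-c_1a_{22}=-2(1+\epsilon_1-2\alpha w_1)(1+w_{1y})$, and hence the hypothesis of Lemma~\ref{006LemmaA.1} reads $\inf_\Gamma 4(1+\epsilon_1-2\alpha w_1)^2(w_{1x}^2+(1+w_{1y})^2)=\lambda(w,\alpha)>0$, which is built into $\mathcal U$. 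What you do instead is unpack Lemma~\ref{006LemmaA.1}: decouple $\dot w_3$, eliminate $f_2$ via $\mathscr F_{1w}$, and reduce $\mathscr F_{2w}$ to a scalar first-order pseudodifferential equation for $f_1$ on $\Gamma$ using the Dirichlet-to-Neumann operator $\Lambda$ of the strip (symbol $\xi\coth\xi$ after non-dimensionalization, where $d=1$). Your principal symbol $-2(1+\epsilon_1-2\alpha w_1)\big(i\,w_{1x}\xi+(1+w_{1y})\xi\coth\xi\big)$ is precisely the Lopatinskii vector $\big(a_{11},\,a_{12}-c_1a_{22}\big)$ paired against $(i\xi,\xi\coth\xi)$, so the two routes are mathematically the same ellipticity condition; Lemma~\ref{006LemmaA.1} just packages the $3\times3$ residue-calculus version, while you do the scalar DtN reduction by hand. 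Your route is more self-contained and illuminating about \emph{why} $\lambda$ is the right quantity, at the cost of having to handle the commutator $[\Lambda,\gamma(1+w_1)]$ and the H\"older pseudodifferential calculus on an unbounded strip with merely $C^{2+\beta}_{\mathrm b}$ coefficients (paraproduct-type estimates), which is exactly the machinery the Agmon--Douglis--Nirenberg framework behind Lemma~\ref{006LemmaA.1} is designed to absorb. Both are valid; the paper's is shorter because it leans entirely on the appendix lemma.
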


\begin{proof}
To apply \cite[Theorem A.1]{SVHMHW2023}, we must verify that
\[
\begin{split}
\inf_{\Gamma} &\Big( (c_1a_{21}-c_2a_{11})^2 + (c_1a_{22}-c_2a_{12})^2 \Big)> 0.
\end{split}
\]
Based on coefficients given in subsection \ref{025section3.1} and condition \eqref{0253.2}, we derive
\[
\begin{split}
&\inf_{\Gamma} \Big( (c_1a_{21}-c_2a_{11})^2 + (c_1a_{22}-c_2a_{12})^2 \Big) \\
& = \inf_\Gamma 4(1 +\epsilon_1 - 2\alpha w_1)^2 \big(w_{1x}^2 + (1 + w_{1y})^2 \big)> 0.
\end{split}
\]
This completes the proof.
\end{proof}
For solitary electrohydrodynamic waves, the linearized operator \(\mathscr{F}_w(w, \alpha_{\mathrm{cr}})\) is not a Fredholm map from \(\mathcal{X} \rightarrow \mathcal{Y}\). In particular, its range is not closed (see \cite{VV2011}). To overcome this difficulty, we employ a center manifold reduction method. However, before proceeding, we first need to establish several preparatory results.
\begin{lemma}[Injectivity]\label{006lemma5.4}
The linear operator \(\mathscr{F}_w(0, \alpha): \mathcal{X}_{\mathrm{b}} \to \mathcal{Y}_{\mathrm{b}}\) defined in \eqref{0253.4} has a trivial kernel if and only if \(\alpha < 1 - \gamma+\epsilon_1 \).
\end{lemma}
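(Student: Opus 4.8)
The plan is to collapse the $3\times 3$ boundary system \eqref{0253.4} to a single scalar Robin problem for $\dot w_1$ and then read off its kernel from a dispersion relation.

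\emph{Reduction.} Membership in $\mathcal X_{\mathrm b}$ means each $\dot w_i$ is a bounded harmonic function on $\mathcal R=\{0<y<1\}$ vanishing on $\mathcal B$. By \eqref{0253.4c}, $\dot w_3=0$ on $\Gamma$ as well, so the Liouville (Phragm\'en--Lindel\"of) theorem for bounded harmonic functions on a strip gives $\dot w_3\equiv 0$ in $\mathcal R$, hence $\dot w_{3y}=0$ on $\Gamma$. Next, \eqref{0253.4a} says $\dot w_2+\gamma\dot w_1=0$ on $\Gamma$; since $\dot w_2+\gamma\dot w_1$ is bounded harmonic and vanishes on both $\mathcal B$ and $\Gamma$, the same theorem yields $\dot w_2\equiv-\gamma\dot w_1$ in $\mathcal R$, so $\dot w_{2y}=-\gamma\dot w_{1y}$ on $\Gamma$. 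Inserting these two identities into \eqref{0253.4b} and writing $u:=\dot w_1$, the kernel condition becomes: $u$ bounded and harmonic in $\mathcal R$, $u=0$ on $\mathcal B$, and $(1+\epsilon_1)\,u_y=(\gamma+\alpha)\,u$ on $\Gamma$. Thus the lemma reduces to showing that this scalar problem admits only $u\equiv 0$ exactly when $\gamma+\alpha<1+\epsilon_1$, i.e.\ $\alpha<\alpha_{\mathrm{cr}}$.

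\emph{The scalar problem.} I would apply the Fourier transform in $x$, interpreting $u(\cdot,y)$ as a tempered distribution for each $y$. Harmonicity gives $\hat u_{yy}=\xi^2\hat u$, and $u|_{\mathcal B}=0$ removes the $\cosh$ mode, so $\hat u(\xi,y)=B(\xi)\sinh(\xi y)$; the condition on $\Gamma$ then forces $B(\xi)\,\mathcal D(\xi)=0$, where $\mathcal D(\xi):=(1+\epsilon_1)\,\xi\cosh\xi-(\gamma+\alpha)\sinh\xi=\sinh\xi\bigl((1+\epsilon_1)\,\xi\coth\xi-(\gamma+\alpha)\bigr)$ for $\xi\neq0$. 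The elementary bound $\xi\coth\xi\ge 1$ for all real $\xi$, with equality only at $\xi=0$ (a consequence of $\sinh t\cosh t>t$ for $t>0$, which also shows $k\mapsto k\coth k$ is strictly increasing on $(0,\infty)$), implies $\mathcal D(\xi)\neq 0$ for every real $\xi$ whenever $\gamma+\alpha<1+\epsilon_1$. Hence $B$ is supported at $\{0\}$, so $u$ is a polynomial in $x$; boundedness forces $u=u(y)$, harmonicity forces $u=by$, and the Robin condition forces $b(1+\epsilon_1-\gamma-\alpha)=0$, i.e.\ $b=0$. Therefore $u\equiv0$ and, tracing back, $\dot w\equiv 0$: the kernel is trivial when $\alpha<\alpha_{\mathrm{cr}}$.

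\emph{Converse and the main difficulty.} For $\alpha\ge\alpha_{\mathrm{cr}}$ I would exhibit explicit kernel elements. If $\alpha=\alpha_{\mathrm{cr}}$ then $\gamma+\alpha=1+\epsilon_1$ and $u(x,y)=y$ solves the scalar problem, so $\dot w=(y,-\gamma y,0)\in\mathcal X_{\mathrm b}$ is a nonzero kernel element. If $\alpha>\alpha_{\mathrm{cr}}$ then $\gamma+\alpha>1+\epsilon_1$, and since $k\mapsto(1+\epsilon_1)k\coth k$ increases continuously from $1+\epsilon_1$ to $+\infty$ on $[0,\infty)$, there is $k>0$ with $(1+\epsilon_1)k\coth k=\gamma+\alpha$; then $u(x,y)=\cos(kx)\sinh(ky)$ works and $\dot w=(\cos(kx)\sinh(ky),-\gamma\cos(kx)\sinh(ky),0)\in\mathcal X_{\mathrm b}$ is a nonzero kernel element (bounded with bounded derivatives, harmonic, vanishing on $\mathcal B$, and even in $x$). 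This proves the ``only if'' direction. The step I expect to be the crux is the analysis of the scalar problem for merely \emph{bounded} $u$: an energy/Rellich identity is not directly available because $u\,u_x$ need not vanish on the truncating cross-sections $\{x=\pm L\}$, so one really must either run the distributional Fourier argument above (together with the strip-Liouville theorem used in the reduction), or give an equivalent Phragm\'en--Lindel\"of treatment; once the bounded solutions are pinned down, the remaining content is just the standard dispersion inequality $\xi\coth\xi\ge1$.
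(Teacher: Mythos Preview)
Your argument is correct and lands on the same dispersion relation $(1+\epsilon_1)k\coth k=\gamma+\alpha$ as the paper, but you supply considerably more justification than the paper does. The paper's proof simply asserts that ``using separation of variables, it suffices to rule out solutions of the form $\dot w_i=c_i\cos(kx)\sinh(ky)$,'' then reads off $c_2=-\gamma c_1$, $c_3=0$, and the dispersion relation, concluding from $k\coth k\ge1$. It does not justify why separable modes exhaust the bounded kernel, nor does it treat the $k=0$ limit or explicitly construct kernel elements for $\alpha\ge\alpha_{\mathrm{cr}}$.

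Your route is more honest about the real content: you first use the strip Phragm\'en--Lindel\"of/Liouville theorem to eliminate $\dot w_3$ and force $\dot w_2=-\gamma\dot w_1$ throughout $\mathcal R$ (not merely on $\Gamma$), reducing to a scalar Robin problem; you then handle merely-bounded solutions via the distributional Fourier transform, correctly isolating the case $\operatorname{supp}B\subset\{0\}$ and disposing of it by the polynomial-growth argument. You also prove the ``only if'' direction in full, exhibiting $\dot w=(y,-\gamma y,0)$ at $\alpha=\alpha_{\mathrm{cr}}$ and $\dot w=(\cos(kx)\sinh(ky),-\gamma\cos(kx)\sinh(ky),0)$ for $\alpha>\alpha_{\mathrm{cr}}$. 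The paper's version is the standard shorthand; yours is what one would actually write if pressed to justify every step, and it correctly identifies the boundedness-without-decay issue as the only nontrivial point.
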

\begin{proof}
Using separation of variables, it suffices to rule out solutions \((\dot{w}_1, \dot{w}_2) \in \mathcal{X}_{\mathrm{b}}\) of the form
\begin{align}
\dot{w}_{1} = c_{1} \cos(kx) \sinh(ky), \quad \dot{w}_{2} = c_{2} \cos(kx) \sinh(ky),\label{0065.8}
\end{align}
where \( k \) is a real wave number and \( c_1, c_2 \in \mathbb{R}\). Substituting \eqref{0065.8} into \eqref{0253.4a}, we obtain \( c_2 = -c_1 \gamma \). Inserting \eqref{0065.8} into \eqref{0253.4b}, we find that nontrivial solutions exist if and only if the dispersion relation
\begin{align}
\gamma + \alpha =(1+\epsilon_1 ) k \coth(k) \label{0065.9}
\end{align}
is satisfied. Observe that \( k \coth(k) \) attains its minimum value of 1 at \( k = 0 \). Therefore, \eqref{0065.9} admits no real solution whenever \( \alpha < 1 - \gamma+\epsilon_1\).
\end{proof}
\begin{rmk}
(i) Lemma \ref{006lemma5.4} demonstrates that the linearized operator \(\mathscr{F}_w(0, \alpha)\) is injective when mapping from \(\mathcal{X}_{\mathrm{b}}\) to \(\mathcal{Y}_{\mathrm{b}}\), given that \(\alpha < 1 - \gamma+\epsilon_1\). \par
(ii) If \(\alpha = 1 - \gamma+\epsilon_1\), the linear operator \(\mathscr{F}_w(0, \alpha)\) becomes singular.
\end{rmk}
We proceed to analyze the operator's local properness and its invertibility properties.
\begin{lemma}[Local properness]\label{006lemma5.5} For \((w, \alpha) \in \mathcal{U}\), the linearized operator \(\mathscr{F}_w(w, \alpha)\) is locally proper both from \(\mathcal{X}_{\mathrm{b}}\) to \(\mathcal{Y}_{\mathrm{b}}\) and from \(\mathcal{X}\) to \(\mathcal{Y}\).
\end{lemma}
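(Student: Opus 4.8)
\emph{Proof proposal.} The plan is to reduce the statement to the abstract properness criterion for asymptotically constant-coefficient elliptic systems on the strip $\mathcal{R}$ that is recorded in Appendix \ref{025appendix A}. That criterion has two inputs: (a) a linear Schauder estimate of the form already established in Lemma \ref{006lemma5.3}, and (b) the requirement that the constant-coefficient operators obtained from $\mathscr{F}_w(w,\alpha)$ in the limits $x\to\pm\infty$ be isomorphisms between the corresponding function spaces. Input (a) is in hand for every $(w,\alpha)\in\mathcal{U}$. For input (b), Remark \ref{rmk1} identifies the limiting operator at either end as $\mathscr{F}_w(0,\alpha)$, so the whole lemma reduces to proving that
\[
\mathscr{F}_w(0,\alpha)\colon \mathcal{X}_{\mathrm b}\to\mathcal{Y}_{\mathrm b} \quad\text{is an isomorphism whenever } \alpha<1-\gamma+\epsilon_1 .
\]

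Injectivity of $\mathscr{F}_w(0,\alpha)$ on $\mathcal{X}_{\mathrm b}$ is precisely Lemma \ref{006lemma5.4}. For surjectivity I would exploit the triangular structure of \eqref{0253.4}: equation \eqref{0253.4c}, together with harmonicity and the homogeneous condition on $\mathcal{B}$, determines $\dot w_3\in\mathcal{X}_{\mathrm b}$ uniquely from its trace, and \eqref{0253.4a} expresses $\dot w_2|_{\Gamma}$ in terms of $\dot w_1|_{\Gamma}$; substituting into \eqref{0253.4b} leaves a single scalar, non-local boundary problem for the bounded harmonic function $\dot w_1$. Passing to the Fourier transform in $x$ converts its symbol into the multiplier $\gamma+\alpha-(1+\epsilon_1)k\coth k$, which, by the computation behind Lemma \ref{006lemma5.4}, stays bounded away from zero for $\alpha<1-\gamma+\epsilon_1$ and grows linearly as $|k|\to\infty$; this yields solvability with uniform estimates and hence a bounded inverse. (Alternatively, one notes that $\mathscr{F}_w(0,\alpha)$ is Fredholm of index zero between $\mathcal{X}_{\mathrm b}$ and $\mathcal{Y}_{\mathrm b}$ by the standard elliptic theory on a strip collected in the appendix, and deduces surjectivity from injectivity.) With (a) and (b) verified, the appendix criterion gives local properness of $\mathscr{F}_w(w,\alpha)\colon\mathcal{X}_{\mathrm b}\to\mathcal{Y}_{\mathrm b}$.

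It then remains to transfer this to the spaces with decay. Since $\mathcal{X}$ and $\mathcal{Y}$ are the closed subspaces of $\mathcal{X}_{\mathrm b}$ and $\mathcal{Y}_{\mathrm b}$ cut out by the $C_0$ conditions, it suffices to show that a bounded sequence $\dot w_n\in\mathcal{X}$ with $\mathscr{F}_w(w,\alpha)\dot w_n$ convergent in $\mathcal{Y}$ has a subsequence converging in $\mathcal{X}$: the $\mathcal{X}_{\mathrm b}$-properness already supplies a subsequence $\dot w_n\to\dot w$ in $\mathcal{X}_{\mathrm b}$, and the point is that $\dot w\in C_0^2$. This follows from an equidecay argument — localizing in the region $|x|>R$, where $\mathscr{F}_w(w,\alpha)$ is a small perturbation of the invertible operator $\mathscr{F}_w(0,\alpha)$, and iterating the resulting estimate shows that the tails of $\dot w_n$ in $C^2$ are small uniformly in $n$ once $R$ is large, so the limit inherits the decay. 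The main obstacle I anticipate is the isomorphism claim in (b): extracting uniform-in-$k$ bounds on $\big(\gamma+\alpha-(1+\epsilon_1)k\coth k\big)^{-1}$ and pushing them through the non-local boundary operator to obtain $C^{k+\beta}_{\mathrm b}$ mapping properties, together with the equidecay step for the $\mathcal{X}\to\mathcal{Y}$ version; the rest is bookkeeping with the appendix machinery.
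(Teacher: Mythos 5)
Your blueprint asks for more than the lemma actually needs, and this extra requirement is where the proposal goes astray. You reduce local properness to (a) the Schauder estimate of Lemma \ref{006lemma5.3} plus (b) the claim that the limiting operator $\mathscr{F}_w(0,\alpha)\colon\mathcal{X}_{\mathrm b}\to\mathcal{Y}_{\mathrm b}$ is an \emph{isomorphism}. But the standard argument for local properness on a strip (and the one the paper follows, by citation to \cite[Lemma 5.5]{SVHMHW2023}) needs only that the limiting operator be \emph{injective}, which is exactly Lemma \ref{006lemma5.4}. The mechanism is a compactness/translation argument: from a bounded sequence $\dot w_n$ in $\mathcal{X}_{\mathrm b}$ with $\mathscr{F}_w(w,\alpha)\dot w_n$ convergent, Arzel\`a–Ascoli extracts a locally convergent subsequence; any failure of $C^0$-convergence produces, after translating to the concentration points, a nonzero bounded solution of $\mathscr{F}_w(0,\alpha)\dot w=0$ — impossible by Lemma \ref{006lemma5.4}; Schauder then upgrades the $C^0$-convergence to convergence in $\mathcal{X}_{\mathrm b}$ (respectively $\mathcal{X}$). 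Your stronger route is not merely extra work: within the paper's logical order it is circular, because invertibility of $\mathscr{F}_w(0,\alpha)$ is \emph{derived from} local properness in Lemma \ref{006lemma5.6} (via the homotopy $\mathscr{L}(t)$ and constancy of the Fredholm index). To use invertibility here you would have to establish it independently, which you only sketch (Fourier-multiplier bounds for $\gamma+\alpha-(1+\epsilon_1)k\coth k$ and their transfer to H\"older mapping properties) and yourself flag as the "main obstacle."

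Two smaller points. First, the appendix does not record an "abstract properness criterion for asymptotically constant-coefficient elliptic systems"; Appendix \ref{025appendix A} contains only the Schauder estimate (Lemma \ref{006LemmaA.1}) and the global bifurcation theorem (Theorem \ref{006theoremfulu2}), so the black box you appeal to is not there. Second, your equidecay step for passing from $\mathcal{X}_{\mathrm b}$ to $\mathcal{X}$ again leans on invertibility of the limiting operator; in the translation-based argument the decay of the limit $\dot w$ comes more directly from the fact that, if $\dot w$ failed to vanish at infinity, translating along a sequence $x_n\to\infty$ would again produce a nontrivial bounded element of $\ker\mathscr{F}_w(0,\alpha)$. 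So the fix is structural: replace input (b) by injectivity of the limiting operator (Lemma \ref{006lemma5.4}) and run the translation/compactness argument, rather than trying to prove an isomorphism that the paper only obtains afterward.
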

As the argument for Lemma \ref{006lemma5.5} closely resembles that of \cite[Lemma 5.5]{SVHMHW2023}, we omit the details here.
\begin{lemma}[Invertibility]\label{006lemma5.6} For \((w, \alpha) \in \mathcal{U}\), the linear operator \(\mathscr{F}_w(0, \alpha)\) is invertible both from \(\mathcal{X}_{\mathrm{b}}\) to \(\mathcal{Y}_{\mathrm{b}}\) and from \(\mathcal{X}\) to \(\mathcal{Y}\).
\end{lemma}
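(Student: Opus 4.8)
We will prove that $\mathscr{F}_w(0,\alpha)$ is a bounded bijection in each of the two settings and then invoke the open mapping theorem. Note first that $\mathscr{F}_w(0,\alpha)$ does not depend on $w$, so the hypothesis $(w,\alpha)\in\mathcal U$ is used only through $\alpha<\alpha_{\mathrm{cr}}=1-\gamma+\epsilon_1$ (the condition $\lambda(0,\alpha)=4(1+\epsilon_1)^2>0$ is automatic). Boundedness of $\mathscr{F}_w(0,\alpha):\mathcal X_{\mathrm b}\to\mathcal Y_{\mathrm b}$ and $\mathcal X\to\mathcal Y$ is clear from \eqref{0253.4}, and injectivity in both settings is immediate from Lemma \ref{006lemma5.4}, since the kernel is already trivial on the larger space $\mathcal X_{\mathrm b}$. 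Thus the only thing to establish is surjectivity onto $\mathcal Y$ and onto $\mathcal Y_{\mathrm b}$.

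The plan for surjectivity is to decouple the system \eqref{0253.4} into a single scalar equation on $\Gamma$. Given data $(f_1,f_2,f_3)$, take $\dot w_3$ to be the unique harmonic function in $\mathcal R$ with trace $f_3$ on $\Gamma$ and zero trace on $\mathcal B$. Writing $g:=\dot w_1|_\Gamma$ for the still‑unknown trace, relation \eqref{0253.4a} forces $\dot w_2|_\Gamma=f_1-\gamma g$, so $\dot w_2$ must be the harmonic extension of $f_1-\gamma g$, while $\dot w_1$ is the harmonic extension of $g$. Substituting into \eqref{0253.4b} and expressing each normal derivative on $\Gamma$ through the Dirichlet–Neumann operator $\mathcal N$ of the strip (whose $x$‑Fourier symbol is $|k|\coth|k|$, the depth being normalised to $1$), \eqref{0253.4b} collapses to
\[
\bigl((1+\epsilon_1)\mathcal N-(\gamma+\alpha)\bigr)g=\mathcal N f_1+\epsilon_1\mathcal N f_3-\tfrac12 f_2=:F\qquad\text{on }\Gamma\cong\mathbb R,
\]
equivalently the oblique‑derivative (Robin) problem $\Delta\dot w_1=0$ in $\mathcal R$, $\dot w_1=0$ on $\mathcal B$, $(1+\epsilon_1)\partial_y\dot w_1-(\gamma+\alpha)\dot w_1=F$ on $\Gamma$. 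Its symbol $M(k)=(1+\epsilon_1)|k|\coth|k|-(\gamma+\alpha)$ satisfies $M(k)\ge M(0)=1+\epsilon_1-\gamma-\alpha>0$ (since $|k|\coth|k|\ge 1$, exactly as in Lemma \ref{006lemma5.4}) and $M(k)\sim(1+\epsilon_1)|k|$ as $|k|\to\infty$, so $(1+\epsilon_1)\mathcal N-(\gamma+\alpha)$ is a nondegenerate first–order elliptic operator. Using the mapping properties of the Poisson and Dirichlet–Neumann operators of the strip together with the Hölder‑class bounds for the order $-1$ multiplier $1/M$, this operator is an isomorphism $C_{\mathrm b}^{3+\beta}(\Gamma)\cap C_0^2(\Gamma)\to C_{\mathrm b}^{2+\beta}(\Gamma)\cap C_0^1(\Gamma)$, and likewise between the corresponding pure $C_{\mathrm b}$ spaces. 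Since $f_1,f_3\in C_{\mathrm b}^{3+\beta}(\Gamma)\cap C_0^2(\Gamma)$ give $\mathcal N f_1,\mathcal N f_3\in C_{\mathrm b}^{2+\beta}(\Gamma)\cap C_0^1(\Gamma)$ and $f_2$ already lies there, $F$ is admissible; we recover a unique $g$ in the required space, and the harmonic extensions $\dot w_1,\dot w_2,\dot w_3$ then lie in $C_{\mathrm b}^{3+\beta}(\overline{\mathcal R})\cap C_0^2(\overline{\mathcal R})$ (resp.\ in the pure $C_{\mathrm b}$ class), vanish on $\mathcal B$, and $\dot w=(\dot w_1,\dot w_2,\dot w_3)\in\mathcal X$ (resp.\ $\mathcal X_{\mathrm b}$) solves $\mathscr F_w(0,\alpha)\dot w=(f_1,f_2,f_3)$. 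Evenness in $x$ is preserved throughout because $M$ is even. The whole construction is linear and bounded in the data, so the open mapping theorem now gives the bounded inverse in both settings. (Alternatively, the $\mathcal X_{\mathrm b}\to\mathcal Y_{\mathrm b}$ statement follows because $\mathscr F_w(0,\alpha)$ is translation invariant in $x$ and, by Lemma \ref{006lemma5.4}, its $x$‑symbol is invertible for every real $k$; the $\mathcal X\to\mathcal Y$ statement then follows from the Schauder estimate of Lemma \ref{006lemma5.3}, the local properness of Lemma \ref{006lemma5.5}, and the Fredholm‑index‑zero framework of the Appendix, combined with injectivity.)

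I expect the main obstacle to be the nonlocal, unbounded‑domain analysis hidden in the isomorphism statement for $(1+\epsilon_1)\mathcal N-(\gamma+\alpha)$: one must genuinely realise the one‑derivative gain of its inverse in the Hölder/Zygmund scale (a Hölder‑class, not $L^p$, multiplier estimate) and, crucially, propagate the $C_0$ decay at spatial infinity across this nonlocal operator — and this is precisely where the hypothesis $\alpha<\alpha_{\mathrm{cr}}$ is essential, since it keeps $M(0)=1+\epsilon_1-\gamma-\alpha>0$ and thereby excludes a zero‑frequency resonance that would otherwise obstruct both surjectivity onto $\mathcal Y_{\mathrm b}$ and the decay needed to land in $\mathcal X$.
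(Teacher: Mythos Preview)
Your argument is correct in outline but takes a genuinely different route from the paper. The paper does \emph{not} construct an explicit inverse. Instead, it argues abstractly: from Lemmas~\ref{006lemma5.4} and~\ref{006lemma5.5} the operator $\mathscr{F}_w(0,\alpha):\mathcal X_{\mathrm b}\to\mathcal Y_{\mathrm b}$ is injective and locally proper, hence semi-Fredholm; to pin down the index, the paper runs a homotopy $\mathscr{L}(t)$ in which the zeroth-order coefficient $(\gamma+\alpha)$ is replaced by $t(\gamma+\alpha)$, checks injectivity and local properness persist for all $t\in[0,1]$, and then invokes a result from \cite{RMCSWMHW2024} asserting that the endpoint $\mathscr{L}(0)$ (a pure normal-derivative operator with no zeroth-order term) is invertible. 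Continuity of the Fredholm index then gives index zero at $t=1$, and invertibility on $\mathcal X_{\mathrm b}\to\mathcal Y_{\mathrm b}$ follows. The passage to $\mathcal X\to\mathcal Y$ is done by a translation argument: given $\dot f\in\mathcal Y$, the $\mathcal X_{\mathrm b}$-preimage $\dot w$ automatically inherits decay because the data do.

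Your approach, by contrast, decouples the $3\times3$ system into a single scalar Robin problem on $\Gamma$ via the strip Dirichlet--Neumann operator and then inverts the first-order multiplier $M(k)=(1+\epsilon_1)|k|\coth|k|-(\gamma+\alpha)$ directly. The reduction and the symbol computation are correct, and you have correctly isolated the crux: that $M(0)=\alpha_{\mathrm{cr}}-\alpha>0$ is exactly the nondegeneracy that makes $1/M$ a genuine order $-1$ symbol. What this buys you is an explicit, constructive inverse; what it costs is the H\"older-space multiplier machinery and the $C_0$-decay propagation you flag as the main obstacle---neither is false, but neither is free, and the paper simply avoids both by leaning on the Fredholm/homotopy framework already set up in the appendix and in \cite{RMCSWMHW2024}. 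Your parenthetical ``alternatively'' is closer in spirit to the paper, though you do not spell out the homotopy that the paper uses to reach index zero.
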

\begin{proof}
Lemmas \ref{006lemma5.4} and \ref{006lemma5.5} establish that \(\mathscr{F}_w(0, \alpha)\) has a trivial kernel and is locally proper as a mapping from both \(\mathcal{X}_{\mathrm{b}} \to \mathcal{Y}_{\mathrm{b}}\) and \(\mathcal{X} \to \mathcal{Y}\). To show invertibility on \(\mathcal{X}_{\mathrm{b}} \to \mathcal{Y}_{\mathrm{b}}\), it remains to verify that the operator has Fredholm index zero.\par
To this end, we consider a one-parameter family of linear operators \(\mathscr{L}(t): \mathcal{X}_{\mathrm{b}} \to \mathcal{Y}_{\mathrm{b}}\), defined by
\[
\mathscr{L}(t) \dot{w} := \begin{pmatrix}
\gamma \dot{w}_{1} + \dot{w}_{2} \\
2\big(\dot{w}_{2y} + (\gamma -1- \epsilon_1)\dot{w}_{1y} + (\gamma + \alpha) t \dot{w}_{1} \big)
\end{pmatrix}\text{ for } t\in[0,1].
\]
A modification of the argument in Lemma \ref{006lemma5.4} shows that \(\mathscr{L}(t)\) is injective from \(\mathcal{X}_{\mathrm{b}} \to \mathcal{Y}_{\mathrm{b}}\), provided \(\alpha \in (0, \alpha_{\mathrm{cr}})\). In particular, the dispersion relation \eqref{0065.9} takes the form
$$
t(\gamma + \alpha) = (1 + \epsilon_1) k \coth(k),
$$
which implies that the kernel is trivial whenever $t(\gamma + \alpha) < 1 + \epsilon_1$.\par
Using the argument from Lemma \ref{006lemma5.5}, we deduce that $\mathscr{L}(t): \mathcal{X}_{\mathrm{b}} \to \mathcal{Y}_{\mathrm{b}}$ is locally proper and thus semi-Fredholm with a trivial kernel for all $t \in [0,1]$. Consequently, the Fredholm index of $\mathscr{L}(t)$ remains constant throughout the interval. The result of \cite[Lemma A.8]{RMCSWMHW2024} extends to our setting, implying that $\mathscr{L}(0)$ is invertible and therefore has Fredholm index zero. By the continuity of the Fredholm index, we deduce that \(\mathscr{L}(1) = \mathscr{F}_w(0, \alpha)\) also has index zero. Together with local properness and a trivial kernel, this implies that $\mathscr{F}_w(0, \alpha): \mathcal{X}_{\mathrm{b}} \to \mathcal{Y}_{\mathrm{b}}$ is invertible.\par
It remains to show that \(\mathscr{F}_w(0, \alpha)\) is also invertible as a map from \(\mathcal{X} \to \mathcal{Y}\). Since \(\mathcal{X} \subset \mathcal{X}_{\mathrm{b}}\), injectivity on \(\mathcal{X}\) follows immediately. To prove surjectivity, let \(\dot{f} \in \mathcal{Y}\). Because the operator is invertible on \(\mathcal{X}_{\mathrm{b}}\), there exists \(\dot{w} \in \mathcal{X}_{\mathrm{b}}\) such that \(\mathscr{F}_w(0, \alpha)\dot{w} = \dot{f}\). A translation argument ensures that \(\dot{w} \in \mathcal{X}\), completing the proof.
\end{proof}
\begin{lemma} \label{006lemma5.6+i} For \((w, \alpha) \in \mathcal{U}\), the linear operator \(\mathscr{F}_w(w, \alpha)\) is Fredholm with index \(0\) as a map from \(\mathcal{X}\) to \(\mathcal{Y}\).
\end{lemma}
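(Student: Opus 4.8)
The plan is to deduce the statement from the abstract Fredholm theory for asymptotically constant elliptic systems developed in the Appendix, feeding it the three preparatory lemmas just established. The operator $\mathscr{F}_w(w,\alpha)$ in \eqref{0253.3} is of exactly the form treated there: a system whose coefficients $a_{ij}, b_i, c_i, d_i$ lie in the appropriate H\"older spaces and which, by Remark \ref{rmk1}, converges as $x\to\pm\infty$ to the constant-coefficient operator $\mathscr{F}_w(0,\alpha)$. Lemma \ref{006lemma5.3} supplies the Schauder estimate required to run this machinery, Lemma \ref{006lemma5.5} supplies the local properness on both $\mathcal{X}\to\mathcal{Y}$ and $\mathcal{X}_{\mathrm{b}}\to\mathcal{Y}_{\mathrm{b}}$, and Lemma \ref{006lemma5.6} together with Remark \ref{rmk1} supplies the invertibility of the limiting operator at each end of the strip.

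The argument then runs in three steps. First, Lemma \ref{006lemma5.5} combined with the Schauder estimate of Lemma \ref{006lemma5.3} shows that $\mathscr{F}_w(w,\alpha)$ is semi-Fredholm, i.e.\ has finite-dimensional kernel and closed range; here local properness plays the role that compactness of the embedding $\mathcal{X}\hookrightarrow C^0(\mathcal{R})$ would play on a bounded domain but fails to play on the unbounded strip. Second, since the limiting operators at $x=+\infty$ and $x=-\infty$ both coincide with $\mathscr{F}_w(0,\alpha)$, which is an isomorphism, hence Fredholm of index $0$, by Lemma \ref{006lemma5.6}, the Appendix theory identifies $\mathscr{F}_w(w,\alpha)$ as Fredholm of index $0$ on $\mathcal{X}_{\mathrm{b}}\to\mathcal{Y}_{\mathrm{b}}$. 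Third, one descends to the spaces $\mathcal{X},\mathcal{Y}$ whose elements decay at infinity: invertibility of the limiting operator forces every bounded null solution of $\mathscr{F}_w(w,\alpha)\dot w=0$ to decay, so the kernels on $\mathcal{X}$ and $\mathcal{X}_{\mathrm{b}}$ agree, and, by the same translation argument used at the end of the proof of Lemma \ref{006lemma5.6}, a datum in $\mathcal{Y}$ that is hit over $\mathcal{X}_{\mathrm{b}}$ already has a preimage in $\mathcal{X}$; tracking ranges and cokernels through this comparison, interpolating if necessary with the weighted spaces $C_{\mu}^{k+\beta}$, shows the index is unchanged, so $\mathscr{F}_w(w,\alpha):\mathcal{X}\to\mathcal{Y}$ is Fredholm of index $0$.

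The delicate point is the second step, the index computation. One cannot simply homotope $w$ linearly to $0$: along $s\mapsto\mathscr{F}_w(sw,\alpha)$ the factor $1+\epsilon_1-2\alpha s w_1$ stays positive and bounded below, being a convex combination of $1+\epsilon_1$ and $1+\epsilon_1-2\alpha w_1$ with the latter bounded away from $0$ because $(w,\alpha)\in\mathcal{U}$ and $w_1\to 0$ at infinity, but the factor $s^2 w_{1x}^2+(1+s w_{1y})^2$ entering $\lambda(sw,\alpha)$ may vanish at an intermediate $s$ for an overhanging profile, where $w_{1y}<-1$ at a point with $w_{1x}=0$, so the deformation can leave $\mathcal{U}$ and the operators may fail to be semi-Fredholm. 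This is precisely why the index must be read off from the invertible asymptotic operators alone rather than from an explicit deformation of the full operator. The step I would carry out most carefully is thus the verification that the Appendix machinery applies verbatim here; in particular that the additional electric-field equation and the $\epsilon_1$-terms do not spoil the structural hypotheses, which is already ensured by the determinant identity computed in the proof of Lemma \ref{006lemma5.3}.
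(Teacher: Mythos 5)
Your overall plan is sound and coincides in structure with the paper's proof: both read the Fredholm property and the index off the constant-coefficient limiting operator $\mathscr{F}_w(0,\alpha)$, which Lemma~\ref{006lemma5.6} has just shown to be invertible, using Remark~\ref{rmk1} to identify that limiting operator. The paper dispatches the lemma in two lines by citing \cite[Lemmas A.12 and A.13]{MHW20151}, which package exactly the ``index equals index of the limit'' argument you sketch.

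One point to fix: the abstract machinery you invoke as ``the Appendix theory'' is not in this paper's appendix. Appendix~\ref{025appendix A} contains only the Schauder estimate (Lemma~\ref{006LemmaA.1}) and the global bifurcation theorem (Theorem~\ref{006theoremfulu2}), and Appendix~\ref{025appendix B} is a case enumeration. The Fredholm comparison result you need -- that a locally proper elliptic operator on a strip whose coefficients tend at $x\to\pm\infty$ to an invertible constant-coefficient operator is Fredholm of index $0$ -- is imported from Wheeler's paper~\cite[Lemmas A.12, A.13]{MHW20151} (and in spirit from Volpert~\cite{VV2011}). Your second step should cite one of these sources explicitly; as written it appeals to a lemma that does not exist in the present paper.

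Your observation about the homotopy $s\mapsto\mathscr{F}_w(sw,\alpha)$ possibly leaving $\mathcal{U}$ for overhanging profiles is correct and shows good awareness, but note it is diagnostic rather than constructive: you correctly explain why a naive coefficient homotopy may fail, yet you do not supply an alternative computation of the index -- you again delegate that to ``the Appendix theory.'' The external lemmas cited by the paper handle this precisely (either by a homotopy that stays within the semi-Fredholm class, or by a direct argument using the invertibility of the limit); you should make clear that this is where the index computation actually happens. Your third step (descent from $\mathcal{X}_{\mathrm{b}}\to\mathcal{Y}_{\mathrm{b}}$ to $\mathcal{X}\to\mathcal{Y}$) mirrors the translation argument at the end of Lemma~\ref{006lemma5.6} and is fine, though the cited lemmas in \cite{MHW20151} already operate on the decaying spaces, making this step largely redundant if one cites them correctly.
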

\begin{proof}
Fix \((w, \alpha) \in \mathcal{U}\). By Remark \ref{rmk1}, the coefficients of \(\mathscr{F}_w(w, \alpha)\) approach those of \(\mathscr{F}_w(0, \alpha)\) as \(|x| \to \infty\). The proof is completed by combining Lemma \ref{006lemma5.6} with \cite[Lemmas A.12 and A.13]{MHW20151}.
\end{proof}

\section{Existence theory}\label{025section4}
\subsection{Small-amplitude}
Our objective in this subsection is to construct small-amplitude solutions for values of $\alpha$ near $\alpha_{\mathrm{cr}}$. Accordingly, we define
\[
\alpha = \alpha^\varepsilon := \alpha_{\mathrm{cr}}- \varepsilon = 1 - \gamma +\epsilon_1- \varepsilon,
\]
where \(\varepsilon > 0\) is a small parameter. The nodal property of solutions to \eqref{Non-dimensionalization equations 1} is
\begin{align}
\eta_{x} < 0\quad\text{in }(\mathcal{R} \cup \Gamma) \cap \{x > 0\},\quad \eta_{x} > 0\quad\text{in }(\mathcal{R} \cup \Gamma) \cap \{x < 0\},\label{0064.1}
\end{align}
where $\eta_{x} = \frac{\mathrm{d}Y}{\mathrm{d}x}$. We now state the main result of this subsection as the following theorem.
\begin{theorem}\label{006theorem7.1}
There exists \(\varepsilon_* > 0\) and a continuous local curve
\begin{align}
\mathscr{C}_{\mathrm{loc}} = \{(w^\varepsilon, \alpha^\varepsilon) : 0 < \varepsilon < \varepsilon_*\} \subset \mathcal{X} \times \mathbb{R} \label{006loc}
\end{align}
consisting of nontrivial symmetric solutions to \(\mathscr{F}(w, \alpha^\varepsilon) = 0\), with the asymptotic expansion
\[
w_1^\varepsilon(x, 1) = \frac{3\varepsilon}{3 - 3\gamma + \gamma^2 + \epsilon_1}
 \operatorname{sech}^2\left(\sqrt{\frac{3\varepsilon}{4(1 + \epsilon_1)}}x\right) + O(\varepsilon^{2 + \frac{1}{2}})
\]
in \(C_{\mathrm{b}}^{3+\beta}\). Moreover, the following properties hold:
\begin{itemize}
  \item[(i)] (Monotonicity): Every solution on \(\mathscr{C}_{\mathrm{loc}}\) satisfies the nodal property \eqref{0064.1}.
  \item[(ii)] (Uniqueness): If \(w \in \mathcal{X}\) and \(\varepsilon > 0\) are sufficiently small, and \(w\) satisfies \eqref{0064.1}, then the equation \(\mathscr{F}(w, \alpha^\varepsilon) = 0\) implies \(w = w^\varepsilon\).
  \item[(iii)] (Invertibility): The linearized operator \(\mathscr{F}_w(w^\varepsilon, \alpha^\varepsilon)\) is invertible as a map from \(\mathcal{X}\) to \(\mathcal{Y}\) for all \(0 < \varepsilon < \varepsilon_*\).
\end{itemize}
\end{theorem}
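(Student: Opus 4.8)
The plan is to exploit two structural features of the system. First, since $\mathscr{F}_3(w,\alpha)=w_3$, the equation $\mathscr{F}(w,\alpha)=0$ forces $w_3$ to be harmonic in $\mathcal{R}$, to vanish on $\mathcal{B}$, and (being in $C_0^2$) to decay at infinity, hence $w_3\equiv 0$, i.e. $\vartheta\equiv y$: the electric potential equations \eqref{Non-dimensionalization equation c}, \eqref{Non-dimensionalization equation f}, \eqref{Non-dimensionalization equation i} are automatically satisfied, and the only remaining trace of the electric field is the parameter $\epsilon_1$ entering $\mathscr{F}_2$. The problem thus reduces to a two-component system for $(w_1,w_2)$ which is structurally the constant-vorticity gravity water-wave problem in conformal coordinates, with $1$ replaced by $1+\epsilon_1$ in the linear part. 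Second, I would recast $\mathscr{F}(w,\alpha^\varepsilon)=0$ as a spatial-dynamics evolution equation: treating $x$ as the evolution variable and $y$-profiles as the phase variable, the elliptic interior equations together with the Dirichlet condition on $\mathcal{B}$ and the two conditions on $\Gamma$ become a reversible (ill-posed) first-order equation $\tfrac{d}{dx}U=\mathcal{L}(\alpha)U+\mathcal{N}(U,\alpha)$ on an appropriate scale of Banach spaces, the reversibility coming from the evenness \eqref{Non-dimensionalization equation 3} already built into $\mathcal{X}$.

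Next I would analyze the spectrum of $\mathcal{L}(\alpha)$ near $\alpha=\alpha_{\mathrm{cr}}$. Purely imaginary spatial eigenvalues $ik$ are governed exactly by the dispersion relation \eqref{0065.9}, $\gamma+\alpha=(1+\epsilon_1)\,k\coth k$; since $k\coth k\ge 1$ with equality only at $k=0$ and $k\coth k=1+\tfrac{k^2}{3}+O(k^4)$, at $\alpha=\alpha_{\mathrm{cr}}$ the operator $\mathcal{L}(\alpha_{\mathrm{cr}})$ has $0$ as an eigenvalue of algebraic multiplicity two and geometric multiplicity one, with the rest of the spectrum uniformly bounded away from $i\mathbb{R}$ for $\alpha$ near $\alpha_{\mathrm{cr}}$. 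This is precisely the KdV-type situation to which the center-manifold reduction of \cite{RMCSWMHW2022} applies. Invoking it, for $|\varepsilon|$ small there is a smooth, $\varepsilon$-dependent, reversible two-dimensional reduced equation which, after normalizing the $2\times 2$ Jordan block, reads $\dot A=B$, $\dot B=\tfrac{3}{1+\epsilon_1}\varepsilon A+\kappa A^2+(\text{higher order})$; a second-order Taylor expansion on the center manifold identifies $\kappa$ and, after converting back to $w_1$, produces the denominator $3-3\gamma+\gamma^2+\epsilon_1=(\gamma^2-3\gamma+3)+\epsilon_1>0$, so $\kappa\neq 0$ for all admissible parameters.

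For each sufficiently small $\varepsilon>0$ the reduced planar system is a saddle at the origin with a unique reversible homoclinic orbit — the KdV solitary wave — with the stated $\operatorname{sech}^2$-type leading order; transporting it back through the reduction gives a nontrivial, $x$-even, exponentially decaying solution $w^\varepsilon=(w_1^\varepsilon,w_2^\varepsilon,0)\in\mathcal{X}$ depending continuously on $\varepsilon$, and reading off the reduced coordinate yields the asymptotic expansion in the statement. For (i), the explicit leading-order profile is a monotone wave of elevation, strictly decreasing in $x$ on $\{x>0\}$; since $\eta_x$ is harmonic in $\mathcal{R}$, odd in $x$, and small, a Serrin-type maximum-principle and boundary-point argument upgrades this to $\eta_x<0$ throughout $(\mathcal{R}\cup\Gamma)\cap\{x>0\}$ for the exact solution. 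For (ii), every sufficiently small bounded solution of the spatial dynamics lies on the center manifold, and on the two-dimensional reduced system the only such solution compatible with \eqref{0064.1} is the homoclinic corresponding to $w^\varepsilon$. For (iii), $\mathscr{F}_w(w^\varepsilon,\alpha^\varepsilon)$ is Fredholm of index zero from $\mathcal{X}$ to $\mathcal{Y}$ by Lemma \ref{006lemma5.6+i}; a kernel element corresponds through the reduction to a bounded solution of the linearized reduced ODE along the homoclinic, and these are spanned by $\partial_x A$, which is odd in $x$ and hence excluded in $\mathcal{X}$, so the kernel is trivial and the operator is invertible after shrinking $\varepsilon_*$.

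The hard part is the passage through the center-manifold reduction: making the ill-posed spatial-dynamics formulation rigorous on the correct scale of spaces, checking that the electric-field coupling does not disturb the $2\times 2$ Jordan-block/hyperbolic splitting of $\mathcal{L}(\alpha_{\mathrm{cr}})$, and carrying the reduction to second order precisely enough to confirm $\kappa\neq 0$ — without which no homoclinic exists. The nodal analysis behind (i) and (ii) is the other delicate point, as the maximum-principle argument in the conformal rectangle has to be uniform in $\varepsilon$ and must interact correctly with the reduction.
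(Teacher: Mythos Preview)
Your proposal is essentially the paper's own approach: apply the center-manifold reduction of \cite{RMCSWMHW2022} at $\alpha=\alpha_{\mathrm{cr}}$, obtain a second-order reversible reduced ODE of KdV type, find its $\operatorname{sech}^2$ homoclinic, and then verify (i)--(iii) via the maximum principle, the center-manifold capture of all small bounded solutions, and Fredholm index zero together with the linearized reduced ODE. Your preliminary observation that $\mathscr{F}_3=0$ forces $w_3\equiv 0$ (harmonic, vanishing on both $\Gamma$ and $\mathcal{B}$, decaying) is a clean shortcut the paper does not make explicit, though it is implicit in $\varphi_3=0$ and $\Upsilon^3_{200}=\Upsilon^3_{101}=0$; note also that your linear coefficient $\tfrac{3}{1+\epsilon_1}\varepsilon$ differs from the paper's $3\varepsilon$, which traces back to the harmonic extension in the paper's formula for $\tilde\Upsilon^1$ --- this does not affect the amplitude $\tfrac{3\varepsilon}{3-3\gamma+\gamma^2+\epsilon_1}$ of the homoclinic (the ratio of the two coefficients is the same) but does shift the width, so it is worth redoing that computation carefully.
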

In the proof of Theorem \ref{006theorem7.1}, we employ the center manifold reduction method developed in \cite{RMCSWMHW2022}. This approach refines and extends the classical framework introduced by Kirchg\"{a}ssner \cite{KK1982} and further developed by Mielke \cite{AM1986, AM1988}. A principal advantage of this methodology is that the entire analysis is formulated in H\"{o}lder spaces, thereby allowing the reduced equations on the center manifold to be solved explicitly via a direct power series expansion.\par
As is standard in center manifold theory, we first expand the function space to allow for small exponential growth in the spatial variable $x$. To this end, we define exponentially weighted variants of the spaces $\mathcal{X}$ and $\mathcal{Y}$, given by
\[
\begin{aligned}
    &\mathcal{X}_{\mu} := \Big\{(w_{1}, w_{2}) \in (C_{\mu}^{3+\beta}(\overline{\mathcal{R}}))^{2}: \Delta w_{i} = 0 \; \text{in} \; \mathcal{R}, \; w_{i} = 0 \; \text{on} \; \mathcal{B} \Big\}, \\
    &\mathcal{Y}_{\mu} := C_{\mu}^{3+\beta}(\Gamma) \times C_{\mu}^{2+\beta}(\Gamma).
\end{aligned}
\]\par
For any $\mu > 0$, the linearized operator around the trivial flow is given by
$$
\mathscr{L}:= \mathscr{F}_{w}(0, \alpha_{\mathrm{cr}}).
$$
As stated explicitly in \eqref{0253.4}, this operator is a mapping from $\mathcal{X}_{\mu}$ to $\mathcal{Y}_{\mu}$. For sufficiently small $\mu > 0$, it is straightforward to verify that the kernel of $\mathscr{L}$ is two-dimensional and is given by
\[
\ker \mathscr{L} = \left\{
\begin{pmatrix}
(A + Bx)\varphi_1(y) \\
(A + Bx)\varphi_2(y)
\end{pmatrix}: A, B \in \mathbb{R} \right\},
\]
where
\[
\varphi =
\begin{pmatrix}
\varphi_1(y) \\
\varphi_2(y)
\end{pmatrix}
=
\begin{pmatrix}
y \\
-\gamma y
\end{pmatrix}.
\]
\begin{theorem}\label{006theorem7.2} (Center manifold reduction)
There exists \(0<\mu\ll 1\), neighborhoods \(\textbf{U} \subset \mathcal{X} \times \mathbb{R}\) and \(\textbf{V} \subset \mathbb{R}^3\) and a $C^3$ coordinate map $\Upsilon = (\Upsilon^1(A, B, \varepsilon), \Upsilon^2(A, B, \varepsilon)): \mathbb{R}^3\rightarrow\mathcal{X}_{\mu}$ satisfying
\[\Upsilon(0, 0, \varepsilon) = \Upsilon_A(0, 0, 0) =\Upsilon_B(0, 0, 0) = 0~ \text{ for all } \varepsilon,\]
such that the following hold:\par
(i) Suppose that \((w, \varepsilon) \in \boldsymbol{U}\) with $\alpha=\alpha_{\mathrm{cr}}-\varepsilon$ solves \eqref{0253.1}. Then \(q(x) := w_1(x, 1)\) solves the second-order ODE
\begin{align}
q'' = f(q, q', \varepsilon),\label{0067.1}
\end{align}
where \(f: \mathbb{R}^3 \to \mathbb{R}\) is the \(C^3\) mapping defined as
\begin{align}
f(A, B, \varepsilon) := \left.\frac{d^2}{dx^2}\right|_{x=0} \Upsilon(A, B, \varepsilon)(x, 1),\label{0067.2}
\end{align}
and admits the Taylor expansion
\begin{align}
f(A, B, \varepsilon) =& \frac{3}{1 + \epsilon_1}\varepsilon A -\frac{3(3 - 3\gamma + \gamma^2 + \epsilon_1)}{2(1 + \epsilon_1)}A^2 \notag\\&+ O\big((|A| + |B|)(|A| + |B| + |\varepsilon|)^2\big).\label{0067.3}
\end{align}\par
(ii) Conversely, if \(q: \mathbb{R} \to \mathbb{R}\) solves the ODE \eqref{0067.1} and \((q(x), q'(x), \varepsilon) \in \textbf{V}\) for all \(x\), then \(q = w_1(\cdot, 1)\) for solution \((w, \varepsilon) \in \boldsymbol{U}\) of \eqref{0253.1}. Moreover, we write it as
\[
w_i(x + \tau, y) = q(x)\varphi_i(y) + q'(x)\tau\varphi_i(y) + \Upsilon^i(q(x), q'(x), \varepsilon)(\tau, y),
\]
for $i=1,2$ and all \(\tau \in \mathbb{R}\).
\end{theorem}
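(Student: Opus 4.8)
The plan is to realize \eqref{0253.1} with $\alpha=\alpha_{\mathrm{cr}}-\varepsilon$ as a reversible, autonomous evolution problem in the horizontal variable $x$ over the cross-section $y\in(0,1)$, and to invoke the H\"older-space center manifold theorem of \cite{RMCSWMHW2022}. One treats the pair $(w,w_x)$ as the state, builds the interior equations $\Delta w_i=0$ and the conditions on $\mathcal B$ into the phase space, and regards the two relations on $\Gamma$ as the evolution law; the weighted spaces $\mathcal X_\mu,\mathcal Y_\mu$ are precisely the functional framework that theorem demands. A preliminary simplification is worth recording: since $\mathscr F_3(w,\alpha)=w_3$, the equation $\mathscr F_3=0$ forces $w_3=0$ on $\Gamma$, which together with $\Delta w_3=0$ and $w_3=0$ on $\mathcal B$ gives $w_3\equiv0$; hence $\varphi_3=0$, $\Upsilon^3\equiv0$, and the electric field enters the reduced dynamics only through the shifted value $\alpha_{\mathrm{cr}}=1-\gamma+\epsilon_1$ and the factor $1+\epsilon_1$ multiplying $|\nabla\eta|^2$ in $\mathscr F_2$. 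The hypotheses to be checked are then: (a) the nonlinear part $\mathscr F(\cdot,\alpha_{\mathrm{cr}}-\varepsilon)-\mathscr L$ is $C^\infty$ between the relevant weighted spaces, which is immediate because $\mathscr F$ is polynomial in $w$ and its first derivatives and the constraint defining $\mathcal U$, equivalently \eqref{0253.2}, keeps all coefficients and the denominators in \eqref{Velocity field and electric field} bounded and bounded away from zero; (b) a spectral splitting of $\mathscr L=\mathscr F_w(0,\alpha_{\mathrm{cr}})$ into a two-dimensional center part carrying a rank-one Jordan block at $0$ and a uniformly hyperbolic remainder; and (c) uniform resolvent bounds on a vertical strip $\{|\operatorname{Re}\lambda|\le\mu\}$.

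For (b) and (c) I would, following the separation of variables of Lemma \ref{006lemma5.4}, compute the spatial eigenvalues $\lambda$ of $\mathscr L$: the purely imaginary ones $\lambda=ik$ are governed by the dispersion relation \eqref{0065.9} with $\alpha=\alpha_{\mathrm{cr}}$, namely $\gamma+\alpha_{\mathrm{cr}}=(1+\epsilon_1)\,k\coth k$, which since $\gamma+\alpha_{\mathrm{cr}}=1+\epsilon_1$, $k\coth k=1+\tfrac{k^{2}}{3}+O(k^{4})$, and $k\coth k>1$ for every real $k\neq0$ has $k=0$ as a double root and no other solution; the remaining (complex) spatial eigenvalues stay bounded away from the imaginary axis, so one may fix $0<\mu\ll1$ with $\{|\operatorname{Re}\lambda|\le\mu\}$ meeting the spectrum only at $\lambda=0$. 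The associated generalized eigenspace is two-dimensional with geometric multiplicity one, spanned by $\varphi=(y,-\gamma y,0)$ and its Jordan partner $x\varphi\in\mathcal X_\mu$ --- exactly the kernel $\{(A+Bx)\varphi\}$ recorded before the statement --- and the resolvent estimates on the strip follow from the explicit one-dimensional boundary value problems in $y$ obtained after a Fourier transform in $x$, together with the elliptic machinery of Lemma \ref{006lemma5.3} and Lemma \ref{006LemmaA.1}.

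Granting the hypotheses, the abstract theorem supplies a $C^{3}$ reduction map $\Upsilon$ with $\Upsilon(0,0,\varepsilon)=\Upsilon_A(0,0,\varepsilon)=\Upsilon_B(0,0,\varepsilon)=0$ and a two-dimensional reduced flow on the center manifold. Since \eqref{0253.1} is invariant under the translation $x\mapsto x+\tau$ and the reflection $x\mapsto-x$, the center manifold and the reduction inherit both symmetries; together with the rank-one Jordan block this forces the reduced system, in the coordinates $(A,B)$ attached to the chain $\{\varphi,x\varphi\}$, to take the form $A'=B$, $B'=f(A,B,\varepsilon)$ with $f$ even in $B$ --- i.e.\ the scalar second-order ODE \eqref{0067.1} for $q=w_1(\cdot,1)$, consistent with $\varphi_1(1)=1$ and the normalization that makes the trace of the first component of $\Upsilon$ vanish at $\tau=0,\ y=1$. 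The translation invariance then yields the reconstruction formula of part (ii) with the auxiliary parameter $\tau$.

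It remains to pin down the Taylor expansion \eqref{0067.3}. I would substitute $w_i=q\,\varphi_i+\Upsilon^i(q,q',\varepsilon)(0,y)$ into $\mathscr F(w,\alpha_{\mathrm{cr}}-\varepsilon)=0$ and expand to second order, using that $\Upsilon$ is at least quadratic in $(A,B,\varepsilon)$. Since $\mathscr F(0,\cdot)=0$ and the only $\alpha$-dependence of $\mathscr F_w(0,\cdot)$ is the term $2(\gamma+\alpha)\dot w_1$ in $\mathscr F_{2w}$, the expansion has the form $\mathscr F(w,\alpha_{\mathrm{cr}}-\varepsilon)=\mathscr L w-2\varepsilon(0,w_1,0)+\mathscr Q(w)+\cdots$ in $\mathcal Y$, with $\mathscr Q$ the quadratic part of $\mathscr F$. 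The scalar relations $\mathscr F_1=0$ and $\mathscr F_3=0$ slave $\Upsilon^2,\Upsilon^3$ to $\Upsilon^1$ algebraically --- at quadratic order $\Upsilon^3\equiv0$ and the $\Gamma$-trace of $\Upsilon^2$ equals $-\tfrac12\gamma q^{2}$ --- so everything collapses to a single scalar solvability condition for $\mathscr F_2$: one solves the linearized equation with the quadratic/$O(\varepsilon)$ forcing for a corrector $\Upsilon$ in the hyperbolic subspace, and the component of the residual transverse to the range, identified with $\mathbb{R}$ via the Jordan structure, is $f$. The part linear in $(q,\varepsilon)$ is governed by $\partial_\alpha$ of the dispersion relation and produces the $\varepsilon A$-coefficient, the quadratic-in-$q$ part produces the $A^{2}$-coefficient in \eqref{0067.3}, the remaining terms are at least cubic in $|A|+|B|+|\varepsilon|$, and inserting $f$ into \eqref{0067.1} reproduces the $\operatorname{sech}^{2}$-type small-amplitude profile of Theorem \ref{006theorem7.1}. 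The main obstacle is this last step fused with (b)--(c): on one side, verifying the spectral-gap and resolvent hypotheses of the abstract theorem in the present weighted-H\"older setting --- where \eqref{0253.2} and the refined Schauder estimates of Lemma \ref{006lemma5.3} are indispensable --- and on the other, carrying the function-valued quadratic corrector $\Upsilon$ through the fully nonlinear dynamic boundary condition $\mathscr F_2$, which couples $w_{1x},w_{1y},w_{2y}$ with the coefficient $1+\epsilon_1-2\alpha w_1$, in order to extract the precise constants in \eqref{0067.3}; tracking how the $\epsilon_1$-dependence propagates from $\alpha_{\mathrm{cr}}$ and the factor $1+\epsilon_1$ into that quadratic coefficient is the delicate bookkeeping.
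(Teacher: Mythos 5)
Your plan follows the same skeleton as the paper: invoke the H\"older-space center manifold theorem of \cite{RMCSWMHW2022} for the spatial dynamics formulation, identify the two-dimensional kernel $\{(A+Bx)\varphi\}$ with $\varphi=(y,-\gamma y,0)$ via the dispersion relation at $\alpha=\alpha_{\mathrm{cr}}$, use reversibility to get the second-order scalar ODE, and then extract the Taylor coefficients. Your observation that $\mathscr F_3=0$ together with $\Delta w_3=0$ and $w_3|_{\mathcal B}=0$ forces $w_3\equiv0$, hence $\varphi_3=0$ and $\Upsilon^3\equiv0$, is a clean shortcut that the paper reaches only after its formal computation sets $s_3=0$; it is a nice structural remark. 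Your discussion of hypotheses (a)--(c) is also sensible, though the paper itself does not rehearse these checks and simply invokes \cite{RMCSWMHW2022} wholesale, since the relevant Fredholm/Schauder structure is furnished by Lemmas \ref{006lemma5.3}--\ref{006lemma5.6+i}.

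The genuine gap is that you stop exactly where the paper's proof actually begins. The paper states that, granting \cite{RMCSWMHW2022}, ``it is sufficient to establish the validity of \eqref{0067.3},'' and then it carries out that computation: it writes the Taylor expansion $\Upsilon = \Upsilon_{200}A^2 + \Upsilon_{101}\varepsilon A + \cdots$ (the index set $\mathcal{J}=\{(2,0,0),(1,0,1)\}$), substitutes into the three boundary conditions $\mathscr F_i=0$, groups by order to obtain the linear problems
\[
\mathscr{L}\Upsilon_{200}=\begin{pmatrix}-\tfrac12\gamma\\ -3+2\gamma-\gamma^2-\epsilon_1\\ 0\end{pmatrix},
\qquad
\mathscr{L}\Upsilon_{101}=\begin{pmatrix}0\\ 2\\ 0\end{pmatrix},
\]
solves a generic problem $\mathscr L\tilde\Upsilon=(s_1,s_2,s_3)^{\mathsf T}$ in closed form with the harmonicity and bottom conditions, and reads off $\partial_x^2\Upsilon^1_{101}(0,0)=3$ and $\partial_x^2\Upsilon^1_{200}(0,0)=-\tfrac32(3-3\gamma+\gamma^2+\epsilon_1)$. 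You correctly identify which pieces of the nonlinearity drive the $\varepsilon A$ and $A^2$ terms, and you even flag the $\epsilon_1$-bookkeeping as ``the delicate part'' --- but you never perform it. Since the theorem asserts specific numerical coefficients, and everything else is cited machinery, that calculation is the proof; without it your argument establishes only that \emph{some} expansion of the form $c_1\varepsilon A + c_2 A^2 + O(\cdots)$ holds, not the stated one with the precise constants that later drive the $\operatorname{sech}^2$ profile and the uniqueness argument in Theorem \ref{006theorem7.1}.
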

\begin{rmk}
It is straightforward to verify that equation \eqref{0253.1} is invariant under the reversal transformation \( w \mapsto w(-\cdot, \cdot) \). As a result, we have
\[
\Upsilon(A, B, \varepsilon)(x, 1) = \Upsilon(A, -B, \varepsilon)(-x, 1),
\]
which implies that the function \(f\) is even with respect to the variable \(B\).
\end{rmk}
\begin{proof}
Strictly speaking, the center manifold results of \cite{RMCSWMHW2022} are formulated for scalar problems and for a restricted class of ``diagonal'' elliptic systems. After eliminating the auxiliary variable $\vartheta$, our reduced formulation does not fall exactly into these categories. Nevertheless, the same disclaimer as in the first paragraph of \cite[proof of Theorem 7.2]{SVHMHW2023} applies here: the system obtained after discarding $\vartheta$ is sufficiently close in structure to those covered by the theory, so that the arguments of \cite{RMCSWMHW2022} may be applied without essential modification. We therefore rely on the center manifold reduction in this broader sense.\par

Based on the results in \cite{RMCSWMHW2022}, it is sufficient to establish the validity of \eqref{0067.3}. Indeed, it has been established in \cite[Theorem 1.6]{RMCSWMHW2022} that the coordinate map \(\Upsilon\) admits the Taylor expansion
\[
\Upsilon(A, B, \varepsilon) := \sum_{\mathcal{J}} \Upsilon_{i'j'k'} A^{i'} B^{j'} \varepsilon^{k'} + O\left((|A| + |B|)(|A| + |B| + |\varepsilon|)^2\right),
\]
where
\[
\mathcal{J} = \left\{(i', j', k') \in \mathbb{N}^3 : i' + 2j' + k' \leq 3,\ i' + j' + k' \geq 2,\ i' + j' \geq 1 \right\}.
\]
The requirement for a second-order expansion, and hence for \(\Upsilon\) to be \(C^3\), follows from the regularity assumption on the background flow stated in \eqref{Non-dimensionalization equation 2}.\par
Explicitly, the index set \(\mathcal{J}\) contains only the tuples
\[
\mathcal{J} = \{(2, 0, 0),\ (1, 0, 1)\}.
\]
Thus, we expand \(\Upsilon\) as
\[
\Upsilon(A, B, \varepsilon) = \Upsilon_{200} A^2 + \Upsilon_{101} \varepsilon A + O\left((|A| + |B|)(|A| + |B| + |\varepsilon|)^2\right),
\]
in the space \(\mathcal{X}_\mu\).\par
We now compute the coefficients \(\Upsilon_{i'j'k'}\). Substituting the Taylor expansion of \(w\),
\[
w_i = (A + Bx) \varphi_i + A^2 \Upsilon_{200}^i + \varepsilon A \Upsilon_{101}^i + O\left((|A| + |B|)(|A| + |B| + |\varepsilon|)^2\right)
\]
into equation \eqref{0253.1}, we evaluate the boundary conditions.\par
The first boundary condition, \(\mathscr{F}_1(w, \alpha) = 0\), reduces to
\[
A^2(\gamma \Upsilon_{200}^1 + \Upsilon_{200}^2 + \frac{1}{2}\gamma) + \varepsilon A(\gamma \Upsilon_{101}^1 + \Upsilon_{101}^2) + O\left((|A| + |B|)(|A| + |B| + |\varepsilon|)^2\right) = 0.
\]
The second boundary condition, \(\mathscr{F}_2(w, \alpha) = 0\), becomes
\[
\begin{aligned}
0 =\ & A^2\big((2 + 2\epsilon_1)\Upsilon_{200}^1 + (2\gamma - 2-2\epsilon_1)\partial_y \Upsilon_{200}^1 + 2\partial_y \Upsilon_{200}^2  + 3 - 2\gamma + \gamma^2 + \epsilon_1\big) \\
& + \varepsilon A\big((2 + 2\epsilon_1)\Upsilon_{101}^1 + (2\gamma - 2-2\epsilon_1)\partial_y \Upsilon_{101}^1 + 2\partial_y \Upsilon_{101}^2 - 2\big) \\
& + O\left((|A| + |B|)(|A| + |B| + |\varepsilon|)^2\right).
\end{aligned}
\]
\par
Grouping terms of the same order, we derive two linear problems
\[
\mathscr{L}\Upsilon_{200} = \begin{pmatrix} - \frac{1}{2}\gamma \\ -3 + 2\gamma - \gamma^2 - \epsilon_1 \end{pmatrix} \quad \text{and} \quad \mathscr{L}\Upsilon_{101} = \begin{pmatrix} 0 \\ 2 \end{pmatrix}.
\]
\par
For any \(s_1, s_2 \in \mathbb{R}\), the general problem
\[
\mathscr{L} \tilde{\Upsilon} = \begin{pmatrix} s_1 \\ s_2  \end{pmatrix}, ~\text{with } \tilde{\Upsilon}^1(0, 0) = \partial_x \tilde{\Upsilon}^1(0, 0) = 0,
\]
is solved by
\[
\begin{aligned}
\tilde{\Upsilon}^1 &= \frac{3}{4}\frac{s_2 - 2s_1}{1 + \epsilon_1} x^2 y - \frac{1}{4}\left(s_2 - 2s_1 - \epsilon_1 \frac{s_2 - 2s_1}{1 + \epsilon_1}\right) y(y^2 - 1), \\
\tilde{\Upsilon}^2 &= -\gamma \tilde{\Upsilon}^1 + s_1 y.
\end{aligned}
\]
By \cite{RMCSWMHW2022}, the solution is unique. Substituting \(s_1 = 0\), \(s_2 = 2\) yields \[\partial_x^2 \Upsilon_{101}^1(0, 0) = \frac{3}{1 + \epsilon_1}.\]
 Similarly, for \(s_1 = -\frac{1}{2}\gamma\) and \(s_2 = -3 + 2\gamma - \gamma^2 - \epsilon_1\), we obtain
\[
\partial_x^2 \Upsilon_{200}^1(0, 0) = -\frac{3(3 - 3\gamma + \gamma^2 + \epsilon_1)}{2(1 + \epsilon_1)}.
\]
Substituting these into \eqref{0067.2}, we obtain the desired expansion in \eqref{0067.3}.
\end{proof}
We now proceed to prove Theorem \ref{006theorem7.1}.
\begin{proof}[\textbf{Proof of Theorem \ref{006theorem7.1}}]
By Theorem \ref{006theorem7.2}, the analysis of system \eqref{Non-dimensionalization equations 1} can be reduced to studying the reduced system \eqref{0067.1}. To proceed, we introduce the scaled variables
\[
x = |\varepsilon|^{-\frac{1}{2}}X, \quad q(x) = \varepsilon Q(X), \quad q_x(x) = \varepsilon |\varepsilon|^{\frac{1}{2}} P(X).
\]
Under this transformation, the equation \eqref{0067.3} becomes
\begin{align}
Q_{XX} = P_X = \frac{3}{1 + \epsilon_1}Q -\frac{3(3 - 3\gamma + \gamma^2 + \epsilon_1)}{2(1 + \epsilon_1)} Q^2 + O\left(|\varepsilon|^{\frac{1}{2}}(|Q| + |P|)\right). \label{0067.4}
\end{align}
A standard computation shows that the explicit homoclinic solution:
\[
Q(X) = \frac{3}{3 - 3\gamma + \gamma^2 + \epsilon_1} \, \operatorname{sech}^2\big(\sqrt{\frac{3}{4(1 + \epsilon_1)}} X\big).
\]
This function solves \eqref{0067.4} and corresponds to a homoclinic orbit to the origin that intersects the \(Q\)-axis transversely at the point \((Q_0, 0)\), where
\[
Q_0 = \frac{3}{3 - 3\gamma + \gamma^2 + \epsilon_1}.
\]
See Figure \ref{006figurexiangweitu}.
\begin{figure}[htbp]
	\centering
	\includegraphics[
	width=18cm,
	height=9cm,
	trim=4.8cm 1.5cm 0cm 0,  
	clip
	]{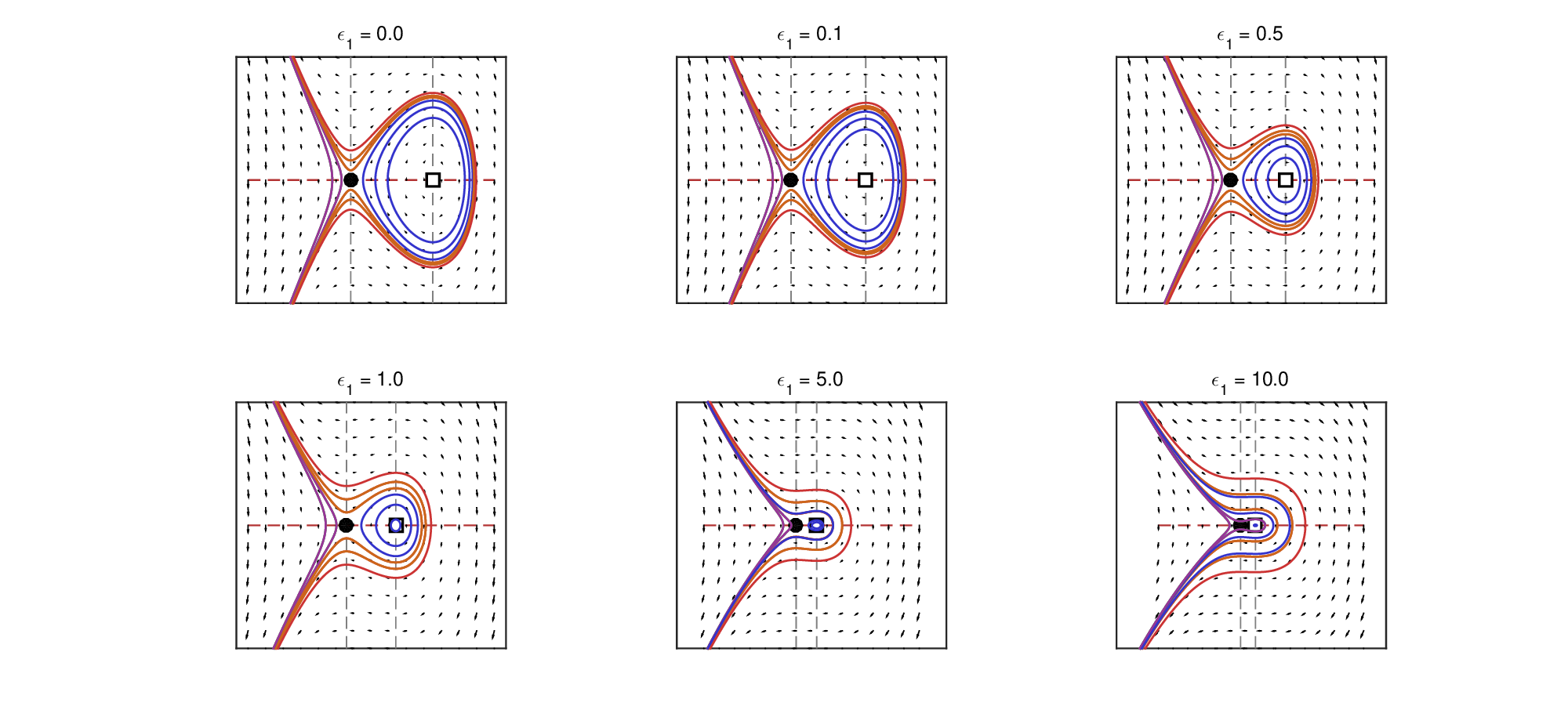}
	\caption{Phase portrait of ODE \eqref{0067.4} with \(\varepsilon=0, \gamma=1, \epsilon_1 \in \{0, 0.1, 0.5, 1, 5, 10\}\)}
	\label{006figurexiangweitu}
\end{figure}

The reversibility of \(\Upsilon\) ensures that equation \eqref{0067.4} is reversible. By standard theory for planar reversible systems, this homoclinic orbit persists for all sufficiently small \(\varepsilon\), giving rise to a continuous one-parameter family of homoclinic solutions. Reversing the scaling yields the local solution curve \(\mathscr{C}_{\mathrm{loc}}\), as stated in \eqref{006loc}.
\par
We now verify the three properties:\par
(i) The inequality $w_{1x}^\varepsilon > 0$ on $\Gamma \cap \{x < 0\}$ follows from the symmetry and monotonicity of the leading-order solution $w_1^\varepsilon$. Specifically, the reduced ODE together with the boundary conditions on $\Gamma$ ensures that $w_1^\varepsilon$ increases in $x$ for $x < 0$ and decreases for $x > 0$. The strong maximum principle implies
\[
w_{1x}^\varepsilon > 0 \quad \text{in } (\Gamma \cup \mathcal{R}) \cap \{x < 0\}.
\]
By symmetry, it follows that
\[
w_{1x}^\varepsilon < 0 \quad \text{in } (\Gamma \cup \mathcal{R}) \cap \{x > 0\}.
\]
Hence, property \eqref{0064.1} holds.
\par
(ii) Suppose \((w, \alpha^\varepsilon)\) is another solution to \(\mathscr{F}_w(w, \alpha^\varepsilon) = 0\). By center manifold theory, \(w\) must correspond to a homoclinic orbit of the reduced ODE. Since the wave is an elevation wave satisfying \eqref{0064.1}, the orbit lies entirely in the region where \(Q > 0\), and thus we conclude that \(w = w^\varepsilon\).
\par
(iii) From Lemma~\ref{006lemma5.6+i}, the operator \(\mathscr{F}_w(w, \alpha)\) is Fredholm of index zero. Therefore, \(\mathscr{F}_w(w, \alpha)\) is invertible if and only if its kernel is trivial. Due to translation invariance, we know
\[
\mathscr{F}_w(w^\varepsilon, \alpha^\varepsilon) w_x^\varepsilon = 0.
\]\par
Using \cite[Theorem 1.9]{RMCSWMHW2022}, any element \(\dot{w} \in \ker \mathscr{F}_w(w, \alpha^\varepsilon)\) satisfies the linearized reduced ODE
\[
\dot{q}'' = \nabla_{(q, q')} f(q, q', \varepsilon) \cdot (\dot{q}, \dot{q}').
\]
Applying the same scaling as before, this becomes the linear planar system
\[
\begin{pmatrix} \dot{Q} \\ \dot{P} \end{pmatrix}_X = \mathcal{M}(X) \begin{pmatrix} \dot{Q} \\ \dot{P} \end{pmatrix},
\]
where
\[
\lim_{X \to \pm\infty} \mathcal{M}(X) =
\begin{pmatrix}
0 & 1 \\
\frac{3}{1 + \epsilon_1} + O(\varepsilon^{1/2}) & O(\varepsilon^{1/2})
\end{pmatrix}.
\]
This matrix has one strictly positive and one strictly negative eigenvalue. Hence, the system admits at most one (up to scaling) bounded solution, which corresponds to \(w_x^\varepsilon\). Since \(w^\varepsilon\) is even in \(x\), \(w_x^\varepsilon\) is odd and therefore lies outside the space \(\mathcal{X}\). Thus, the kernel of \(\mathscr{F}_w(w^\varepsilon, \alpha^\varepsilon): \mathcal{X} \to \mathcal{Y}\) is trivial. This completes the proof.
\end{proof}

\begin{rmk}While the argument is closely related to the proof of Theorem 7.1 in \cite{SVHMHW2023}, the presence of the electric field and the associated scaling of the current require the modifications presented here to ensure that these properties hold in the present setting.
\end{rmk}
\subsection{Large-amplitude}
Let us first state a modified version of the global bifurcation theorem for solitary waves as follows.
\begin{theorem}\label{006theoremfulu2} (\cite[Theorem B.1]{SVHMHW2023})
Let $\mathscr{X}$ and $\mathscr{Y}$ be Banach spaces, $\mathscr{U}$ be an open subset of $\mathscr{X}\times\mathbb{R}$ with
$( 0, 0) \in \partial \mathscr{U}$. Consider a real-analytic mapping $\mathscr{F}: \mathscr{U} \to \mathscr{Y}$. Suppose that\par
(I) for any $(\mu,x)\in\mathscr{U}$ with $\mathcal{F}(\mu,x)=0$ the Fr\'{e}chet derivative $\mathcal{F}_x(\mu,x):\mathscr{X}\to\mathscr{Y}$ is locally proper;\par
(II) there exists a continuous curve $\mathscr{C}_\mathrm{loc}$ of solutions to $\mathcal{F}(\mu, x) = 0$, parameterized as
$$\mathscr{C}_{\mathrm{loc}}:=\{(\mu,\tilde{x}(\mu)):0<\mu<\mu_{*}\}\subset\mathcal{F}^{-1}(0),$$
for some $\mu_*>0$ and continuous $\tilde{x}$ with values in $\mathscr{X}$ and $\lim_{\mu\searrow0}\tilde{x}(\mu)=0$;\par
(III) the linearized operator $\mathcal{F}_x(\mu,\tilde{x}(\mu)):\mathscr{X}\to\mathscr{Y}$ is invertible for all $\mu$.\par
Then $\mathscr{C}_\mathrm{loc}$ is contained is a curve of solutions $\mathscr{C}$, parameterized as
$$\mathscr{C}:=\{(\mu(s),x(s)):0<s<\infty\}\subset\mathcal{F}^{-1}(0)$$
for some continuous $(0,\infty)\ni s\mapsto(x(s),\mu(s))\in\mathscr{U}$, with the following properties\par
(a) One of the following alternatives holds:\par
\quad(i) (Blow-up) As $s\rightarrow \infty$,
$$N(s):=\|x(s)\|_{\mathcal{X}}+\frac{1}{\mathrm{dist}((\mu(s),x(s)),\:\partial\mathcal{U})}+\mu(s)\to\infty.$$\par
\quad(ii) (Loss of compactness) There exists a sequence $s_n\to\infty$  such that $\mathrm{sup}_n N(s_n)<\infty$\par
\quad\quad \quad but $\left\{x(s_{n})\right\}$ has no subsequences converging in $\mathscr{X}$.\par
(b) Near each point $(\mu(s_0),x(s_0))\in\mathscr{C}$, we can reparameterize $\mathscr{C}$ so that $s\mapsto(\mu(s),x(s))$
is real analytic.\par
(c) $( \mu ( s) , x( s) ) \notin \mathscr{C} _{loc}$ for $s$ sufficiently large.
\end{theorem}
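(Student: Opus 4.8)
The plan is to build $\mathscr{C}$ by analytic continuation of $\mathscr{C}_{\mathrm{loc}}$, following the Dancer--Buffoni--Toland theory of global analytic bifurcation \cite{END1973, BBJT2003} in the solitary-wave form of \cite{RMCSWMHW2018}. First I would set up the local picture: reparameterize $\mathscr{C}_{\mathrm{loc}}$ by a parameter $s$, fix a base point, and note that hypothesis (III) together with the analytic implicit function theorem shows that near every point of $\mathscr{C}_{\mathrm{loc}}$ the zero set $\mathscr{F}^{-1}(0)$ \emph{is} a locally unique real-analytic arc. This supplies the germ of the curve and already yields property (b) near $\mathscr{C}_{\mathrm{loc}}$.

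The heart of the argument is a maximal continuation combined with a local structure result at non-invertible points. Let $T \in (0,\infty]$ be supremal such that the curve extends to $s \in (0,T)$, is real-analytic near each of its points, and is maximal in the sense that it cannot be properly prolonged as such a curve, in the spirit of \cite[Ch.~9]{BBJT2003}. To continue across a value $s_0$, I would pass to the limit: if $N$ stays bounded as $s \uparrow s_0$ and the trace $\{x(s)\}$ is precompact, then the limit point $(\mu_0, x_0)$ lies in $\mathscr{U}$ and solves $\mathscr{F}=0$; by (I) the operator $\mathscr{F}_x(\mu_0,x_0)$ is semi-Fredholm, and by continuity of the Fredholm index along the connected solution arc emanating from an invertible operator on $\mathscr{C}_{\mathrm{loc}}$, it is in fact Fredholm of index $0$. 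A Lyapunov--Schmidt reduction then converts $\mathscr{F} = 0$ near $(\mu_0, x_0)$ into a real-analytic equation between spaces of equal finite dimension, whose zero set, by the structure theory of real-analytic varieties (the \emph{analytic arcs lemma}, via Puiseux/Lojasiewicz), is a finite union of analytic arcs through the origin. Hence $\mathscr{F}^{-1}(0)$ is locally a finite union of analytic arcs, the curve can always be prolonged, and in the precompact case $T = \infty$.

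It remains to read off the conclusions. If alternative (ii) fails -- that is, if there is no sequence $s_n \to \infty$ along which $N$ is bounded but $\{x(s_n)\}$ is not precompact -- then whenever $N$ stays bounded the trace is precompact inside $\mathscr{U}$, so the continuation argument extends the curve past any finite parameter and the only remaining possibility is $N(s) \to \infty$, i.e. alternative (i). Property (b) follows from the finite-analytic-arc structure by reparameterizing each arc by its own analytic parameter. For property (c), if $(\mu(s), x(s)) \in \mathscr{C}_{\mathrm{loc}}$ for arbitrarily large $s$, local uniqueness along $\mathscr{C}_{\mathrm{loc}}$ (again from (III)), analyticity, and connectedness -- by a standard argument as in \cite[Ch.~9]{BBJT2003} -- force $\mathscr{C}$ to coincide with $\mathscr{C}_{\mathrm{loc}}$, which is parameterized by the bounded interval $(0,\mu_*)$, contradicting that $\mathscr{C}$ is a genuine global extension with parameter $s \in (0,\infty)$.

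\textbf{Main obstacle.} The delicate points are: (a) upgrading local properness of $\mathscr{F}_x$ to Fredholm index $0$ by a homotopy/continuity argument along the solution set, so that the real-analytic variety structure applies and the curve never terminates prematurely at a degenerate solution; and (b) the fact that, because the spatial domain is unbounded and the relevant H\"older embeddings are not compact, alternative (ii) genuinely cannot be ruled out at this level of abstraction -- it must be carried in the statement and disposed of separately, which in the present paper is achieved via the nodal analysis and the nonexistence of bores (Theorem \ref{006theorem3.4}).
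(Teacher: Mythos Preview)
The paper does not prove this theorem at all: it is stated in the appendix as a quotation of \cite[Theorem B.1]{SVHMHW2023} (itself an adaptation of \cite[Theorem 6.1]{RMCSWMHW2018} and the Dancer--Buffoni--Toland theory \cite{END1973,BBJT2003}), and is used as a black box in the proof of Theorem~\ref{006theorem7.4}. Your sketch correctly outlines the standard argument behind those references---analytic implicit function theorem along $\mathscr{C}_{\mathrm{loc}}$, maximal analytic continuation, Lyapunov--Schmidt reduction plus the real-analytic arcs lemma at degenerate points, and the dichotomy between blow-up and loss of compactness---so it is consistent with what the cited sources actually do.
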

Then, we can establish the following result by Theorem \ref{006theoremfulu2}, which extends the local solution curve $\mathscr{C}_{\mathrm{loc}}$ to a global continuum.\par
\begin{theorem}[Global continuation]\label{006theorem7.4} The local curve $\mathscr{C}_\mathrm{loc}$ is contained in a continuous curve of solutions parameterized as
\[
\mathscr{C} = \{(w(s), \alpha(s)) : 0 < s < \infty\} \subset \mathcal{U},
\]
with the following properties:
\begin{itemize}
\item[(a)] One of the following alternatives holds:\par
 \quad (i) (Blow-up) As $s \to \infty$,
 \begin{align}
   N(s) := \|w(s)\|_{\mathcal{X}}+\frac{1}{\kappa(w(s), \alpha(s))-\epsilon_1} + \frac{1}{\lambda(w(s), \alpha(s))} + \frac{1}{\alpha(s)} + \frac{1}{\alpha_{\mathrm{cr}} - \alpha(s)} \longrightarrow \infty.
  \label{0067.4+1}
\end{align}
\quad (ii) (Loss of compactness) There exists a sequence $\{s_n\} \to \infty$ as $n\to \infty$ such that
$\sup_n N(s_n) < \infty$, but $\{w(s_n)\}$ does not have subsequences converging in $\chi$.
\item[(b)] Near each point $(w(s_0), \alpha(s_0)) \in \mathscr{C}$, we can re-parameterize $\mathscr{C}$ so that the mapping $s \mapsto (w(s), \alpha(s))$ is real analytic.
\item[(c)] For $s$ sufficiently large, $(w(s), \alpha(s)) \notin \mathscr{C}_\mathrm{loc}$.
\end{itemize}
\end{theorem}
\begin{proof}
It is clear that \(\mathscr{F}\) is real analytic on the open set \(\mathcal{U}\). According to Lemma \ref{006lemma5.5}, the linearized operator \(\mathscr{F}_w(w, \alpha)\) is locally proper for all \((w, \alpha) \in \mathcal{U}\). In Theorem \ref{006theorem7.1}, we constructed a local curve of solutions \(\mathscr{C}_{\mathrm{loc}} = \{(w^\varepsilon, \alpha^\varepsilon): 0 < \varepsilon < \infty\} \subset \mathcal{U}\). Furthermore, part (iii) of Theorem \ref{006theorem7.1} guarantees that the operator \(\mathscr{F}_w(w, \alpha)\) is invertible along \(\mathscr{C}_{\mathrm{loc}}\). Applying Theorem \ref{006theoremfulu2}, we conclude the proof of Theorem \ref{006theorem7.4}.
\end{proof}
\section{Global bifurcation structure}\label{025section5}
In this section, we analyze several alternatives presented in Theorem \ref{006theorem7.4} and conclude by establishing Theorem \ref{006theorem7.9ABOVE}.
\subsection{Nodal properties}\label{025subsection5.1}
The monotonicity property plays a crucial role in ruling out alternative (ii) in Theorem \ref{006theorem7.4}. However, the set of monotone functions is neither open nor closed in the relevant topology. To overcome this difficulty, we introduce nodal properties-specifically, sign conditions on the derivatives of the solutions. These conditions not only imply monotonicity but also define a set that is both open and closed in the appropriate topological space. \par
Compared with \cite{SVHMHW2023}, the nodal properties analyzed in this paper are more intricate due to the presence of a coupled electric field. The analysis requires accounting not only for the properties of the velocity field but also for the behavior of the electric field.\par

Without loss of generality, we restrict our attention to the right half of the domain \(\mathcal{R}\). We define
\[
\mathcal{R}^{+} := \{(x, y) \in \mathcal{R} : x > 0\}, \, \Gamma^{+} := \{(x, y) \in \Gamma : x > 0\} \text{ and } L := \{(x, y) \in \mathcal{R} : x = 0\}.
\]
The monotonicity condition with respect to $v$ is given by
\begin{align}
v < 0 \quad \text{in } \Gamma^+ \cup \mathcal{R}^+.\label{0064.2}
\end{align}
If \eqref{0064.2} holds, then $\eta_x < 0 $ in $\Gamma^+ \cup \mathcal{R}^+$, as shown in the following lemma. Moreover, from \eqref{Velocity field and electric field}, it follows that $e_1 > 0$ in $ \Gamma^+ \cup \mathcal{R}^+$.
\begin{lemma}\label{006lemma4.1}
Let \((\eta, \zeta, \vartheta, \alpha)\) be a solution to \eqref{Non-dimensionalization equations 1}, and define \(v\) as in \eqref{Velocity field and electric field}. If \eqref{0064.2} holds, we have $\eta_x < 0$ in $\Gamma^+ \cup \mathcal{R}^+$.
\end{lemma}
\begin{proof}
By the kinematic boundary condition \eqref{Velocity field and electric field equation e}, the vector fields $(u, v)$ and
$(\eta_y, \eta_x)$ are non-vanishing and parallel when restricted to $\Gamma$.  Moreover, the asymptotic condition \eqref{w equation 2} implies that $\eta_y \to 1$ as $x \to \infty$, while \eqref{Velocity field and electric field equations 2} guarantees that $u$ converges to $1$ in the same limit. Thus $u$ and $\eta_y$ have the same sign, which implies that $v$ and $\eta_x$ have the same sign. It follows from \eqref{0064.2} that $\eta_x<0$ on $\Gamma^+$.

Next, differentiating \eqref{Non-dimensionalization equation a} with respect to \(x\) yields \(\Delta \eta_x = \partial_x (\Delta \eta) = 0\). Applying the strong maximum principle to \(\eta_x\) in \(\mathcal{R}^+\), and using the boundary condition established above, we conclude that \(\eta_x < 0\) throughout \(\mathcal{R}^+\).
\end{proof}
To demonstrate that monotonicity is preserved along the global solution curve $\mathscr{C}$, we show that condition \eqref{0064.1} defines a subset of nontrivial solutions to \eqref{Non-dimensionalization equations 1} that is both relatively open and relatively closed.
\begin{lemma}[Closed property]\label{006lemma4.2}
Let $(\eta, \zeta, \vartheta, \alpha)$ be a solution to \eqref{Non-dimensionalization equations 1}, and define $v$ as in \eqref{Velocity field and electric field}. If $v \leq 0$ on $\Gamma^+$, we have either \eqref{0064.2} holds or $v \equiv 0$.
\end{lemma}
\begin{proof}
It suffices to consider $v \not\equiv 0$. From conditions \eqref{Velocity field and electric field equation h} and \eqref{Velocity field and electric field equations 2}, we know that $v = 0$ along $\mathcal{B}$ as $x \to \infty$. In addition, \eqref{Velocity field and electric field equations 4} implies that $v = 0$ on $L$. Since $v\leq0$ on $\Gamma^+$, the strong maximum principle yields $v < 0$ in $\mathcal{R}^+$. Then, we have $e_1>0$ in $\mathcal{R}^+$.\par
It remains to show $v < 0$ also holds on $\Gamma^+$. To this end, we differentiate the dynamic boundary condition \eqref{Velocity field and electric field equation g} with respect to $x$, and multiply by $-\frac{u}{2}$, which yields
\begin{align}
- u(vv_x + uu_x +\epsilon_1{e_2}{e_2}_x +\epsilon_1{e_1}{e_1}_x + \alpha \eta_x) = 0. \label{0064.4}
\end{align}
Using \eqref{Velocity field and electric field equation a}, \eqref{Velocity field and electric field equation b} and \eqref{Velocity field and electric field equation e}, we substitute \(u_x = \gamma \eta_x - v_y\), \({e_2}_x ={e_1}_y\) and \(u\eta_x=v\eta_y\) into \eqref{0064.4} to obtain
\begin{align}
u(uv_y - vv_x-\epsilon_1{e_2}{e_1}_y -\epsilon_1{e_1}{e_1}_x)- \eta_y v(\gamma u + \alpha) = 0. \label{0064.5}
\end{align}
 As a result, $v$ and $e_1$ satisfy the following boundary value problem
\begin{subequations}\label{0064.3}
\begin{align}
\Delta v &= 0 \quad \text{in } \mathcal{R}, \label{0064.3a} \\
\Delta e_1 &= 0 \quad \text{in } \mathcal{R}, \label{0064.3b} \\
u(uv_y - vv_x-\epsilon_1{e_2}{e_1}_y -\epsilon_1{e_1}{e_1}_x)- \eta_y v(\gamma u+ \alpha)&= 0 \quad \text{on } \Gamma, \label{0064.3c} \\
v &= 0 \quad \text{on } \mathcal{B}, \label{0064.3d}\\
e_1 &= 0 \quad \text{on } \mathcal{B}, \label{0064.3e}
\end{align}
\end{subequations}
where the boundary condition \eqref{0064.3c} follows from \eqref{0064.5}.\par

We argue by contradiction and suppose that $v$ attains its maximum value of 0 at some point $(x_0, 1) \in \Gamma^+$. Remark \ref{025remk2.1} gives that $u \neq 0$ at $(x_0, 1)$. It then follows from \eqref{Velocity field and electric field equation e} and \eqref{Velocity field and electric field equation f} that $\eta_x = 0$ and $\eta_y \neq 0$ at this point. Furthermore, the relationship \eqref{Velocity field and electric field} implies $e_1 = 0$ and $e_2 \neq 0$ there. Substituting these into \eqref{0064.3c} yields $u^2 v_y - \epsilon_1 u e_2 e_{1y} = 0$ at $(x_0, 1)$. We now claim that $u$ and $e_2$ have the same sign at $(x_0, 1)$. By the kinematic boundary condition \eqref{Velocity field and electric field equation e}, the vector fields $(u, v)$ and
$(\eta_y, \eta_x)$ are non-vanishing and parallel when restricted to $\Gamma$. By \eqref{Velocity field and electric field}, then the vector fields $(u, v)$ and
$(e_2, -e_1)$ are non-vanishing and parallel when restricted to $\Gamma$. That is, there exists a continuous function $c(x)$ such that $(u(x,1),v(x,1))=c(x)(e_2(x,1), -e_1(x,1))$. In addition, the asymptotic condition \eqref{Velocity field and electric field equations 2} shows that $u, e_2\rightarrow 1$ on $\Gamma^+$ as $x\rightarrow\infty$, which means that $c(x)\rightarrow1>0$ as $x\rightarrow\infty$. Indeed, if we assume that $u$ and $e_2$ have the opposite signs at $(x_0, 1)$, then we have $c(x_0)<0$. Since $c(x)$ is continuous with respect to $x$, there exists $x_1\in (x_0, +\infty)$ such that $c(x_1)=0$.
We conclude that at the point $(x_1,1)$, both $u = 0$ and $v = 0$, which contradicts Remark \ref{025remk2.1}. Hence $u$ and $e_2$ must share the same sign at $(x_0, 1)$. Then, by the Hopf boundary point lemma, we obtain  $v_y > 0$ and $ e_{1y} < 0$ at $(x_0, 1)$, which leads to a contradiction.\par

Consequently, the strict inequality $v < 0$ must hold on $\Gamma^+$. This completes the proof.
\end{proof}
\begin{lemma}[Open property]\label{006lemma4.3}
Let \((\eta^{*}, \zeta^{*}, \vartheta^{*}, \alpha^{*})\) be a solution to \eqref{Non-dimensionalization equations 1}, and let \(v^*\) be defined by \eqref{Velocity field and electric field}. Suppose that \eqref{0064.2} holds, \(\alpha^* < \alpha_{\mathrm{cr}}\). Then there exists \(\varepsilon > 0\) such that for any solution \((\eta, \zeta, \vartheta, \alpha)\) to \eqref{Non-dimensionalization equations 1} satisfying
\[
\|\eta - \eta^{*} \|_{C^3(\mathcal{R})} + \|\zeta - \zeta^{*} \|_{C^3(\mathcal{R})} + \|\vartheta - \vartheta^{*} \|_{C^3(\mathcal{R})} + |\alpha - \alpha^{*}| < \varepsilon,
\]
the corresponding \(v\) satisfies \eqref{0064.2}.
\end{lemma}
We divide the domain $\mathcal{R}^+$ into a bounded region and a semi-infinite strip. For any $M > 0$, we define the semi-infinite strip by
\[
\mathcal{R}_M^+ := \{(x, y) \in \mathcal{R} : x > M\},
\]
with
\begin{align}
\Gamma_M^+ &:= \{(x, y) \in \Gamma : x > M\}, \notag\\
\mathcal{B}_M^+ &:= \{(x, y) \in \mathcal{B} : x > M\}, \notag\\
L_M^+ &:= \{(x, y) \in \mathcal{R} : x = M\}.\notag
\end{align}
The structure of the proof follows the approach presented in \cite[Proposition 4.3]{SVHMHW2023}. We now consider the two regions separately, beginning with the bounded rectangular region.
\begin{lemma}\label{006lemma4.5}
Let $(\eta^{*}, \zeta^{*}, \vartheta^{*}, \alpha^{*})$ be a solution to \eqref{Non-dimensionalization equations 1}, and let $(u^*, v^*)$ be defined as in \eqref{Velocity field and electric field}. Suppose that \eqref{0064.2} holds, \(\alpha^{*} < \alpha_{\mathrm{cr}}\). Then, for any \(M > 0\), there exists a constant \(\varepsilon_M > 0\) such that for every solution \((\eta, \zeta, \vartheta, \alpha)\) to \eqref{Non-dimensionalization equations 1} satisfying
\[
\|\eta - \eta^{*} \|_{C^3(\mathcal{R})} + \|\zeta - \zeta^{*} \|_{C^3(\mathcal{R})} + \|\vartheta - \vartheta^{*} \|_{C^3(\mathcal{R})} + |\alpha - \alpha^{*}| < \varepsilon_M,
\]
the corresponding \(v\) satisfies
\begin{align}
v < 0 \quad \text{in } (\mathcal{R} \cup \Gamma) \cap \{0 < x \leq 2M\}.\label{0064.6}
\end{align}
\end{lemma}
\begin{proof}
Let $\eta^*, \zeta^*, \vartheta^*, \alpha^*, u^*$ and $v^*$ be as given in the statement of the lemma. We begin by considering the following inequalities:
\begin{subequations}\label{0064.7}
\begin{align}
v^*_x &< 0 && \text{on } L^+ \cup \{(0,1)\}, \label{0064.7a} \\
v^*_y &< 0 && \text{on } \mathcal{B}^+ \cap \{0 < x \leq 2M\}, \label{0064.7b} \\
v^*_{xy} &< 0 && \text{at } (0,0).\label{0064.7c}
\end{align}
\end{subequations}
We claim that these inequalities follow directly from conditions \eqref{0064.2} and \eqref{Non-dimensionalization equations 1}. Since \(v^*\) and \({e_1^*}\) are harmonic and odd in \(x\), we have
\[
v^* = v^*_{xx} = v^*_{yy} = v^*_{y}={e_1^*} = {e_1^*}_{xx} = {e_1^*}_{yy} = {e_1^*}_{y} = 0 \quad \text{at } (0,1).
\]
Since \(u^*\) and \({e_2^*}\) are even in \(x\), we have $u^*_x={e_2^*}_x=0$ at $(0,1)$. To verify \eqref{0064.7a} and \eqref{0064.7b}, we differentiate \eqref{0064.3c} with respect to $x$, and then evaluate the resulting identity at the wave crest $(0,1)$
\begin{align}
(u^*)^2 v^*_{xy} - u^*(v^*_x)^2-\epsilon_1 u^*{e_2^*}{e_1^*}_{xy}-\epsilon_1 u^*{e_1^*}_x{e_1^*}_{x}- \eta^*_y(\gamma u^* + \alpha^*)v^*_x = 0 \quad \text{at } (0,1).\label{0064.8}
\end{align}
From Remark \ref{025remk2.1}, we have $u^* \neq 0$ at $(0, 1)$. Since $(u^*, v^*)$ and $(\eta_y^*, \eta_x^*)$ are parallel, it follows that $\eta_x^* = 0$ and $\eta_y^* \neq 0$ there. Consequently, \eqref{Velocity field and electric field} implies $e_2^* \neq 0$ at $(0, 1)$. Similarly, following the reasoning in the proof of Lemma \ref{006lemma4.2}, there exists a continuous function $C(x)$ such that
$$e_1^*(x,1)=C(x)v^*(x,1).$$
Differentiating this relation with respect to $x$ and evaluating at $(0,1)$ gives
$$e^*_{1x}(0,1)=C_x(0)v^*(0,1)+C(0)v^*_x(0,1).$$
Assuming $v^*_x(0,1)= 0$ and using the fact that $v^*(0,1)=0$, we obtain $e^*_{1x}(0,1)=0$. Substituting these into equation \eqref{0064.8} yields
$$({u^*})^2 v^*_{xy} - \epsilon_1 u^* e^*_2 e^*_{1xy} = 0.$$
Then, as argued in the proof of Lemma \ref{006lemma4.2}, we conclude that $u^*$ and $e^*_2$ have the same sign at $(0, 1)$. By the Serrin's edge point lemma, we conclude that $v^*_{xy} > 0$ and $e^*_{1xy} < 0$ at $(0, 1)$, which leads to a contradiction. Thus there holds that $v^*_x(0,1)<0$.\par

Moreover, since $v^* = 0$ along $L^+$, the Hopf lemma implies that $v^*_x < 0$ on $L^+$. Likewise, because $v^*$ vanishes along $\mathcal{B}^+$, the Hopf lemma guarantees that $v^*_y < 0$ on $\mathcal{B}^+ \cap \{0 < x \leq 2M\}$. These arguments establish \eqref{0064.7a} and \eqref{0064.7b}.\par
We prove \eqref{0064.7c} at the corner point $(0,0)$. As previously noted, we have $v^* = v^*_{xx} = v^*_y = v^*_{yy} ={e_1}^* = {e_1}^*_{xx} = {e_1}^*_y = {e_1}^*_{yy} = 0$ at this point. From \eqref{0064.7b}, we have $v^*_{xy} \leq 0$ there. However, since $v^*_x = 0$ along $\mathcal{B}$, $v^*_{xy} = 0$ at $(0,0)$ would once again contradict Serrin's edge point lemma. Therefore, we must conclude that $v^*_{xy} < 0$ at $(0,0)$.\par

Following the approach in \cite{ACWS2007}, one can show that the combined \eqref{0064.6} and \eqref{0064.7} define an open subset of $C^2(\mathcal{R}^+ \cap \{0 \leq x \leq 2M\})$. Hence, for sufficiently small $\varepsilon_M > 0$, any solution sufficiently close to $v$ will also satisfy \eqref{0064.6} and \eqref{0064.7}, thus completing the proof.
\end{proof}
We now turn our attention to the semi-infinite strip $\mathcal{R}_M^+$:
\begin{lemma}\label{006lemma4.6}
Fix \(\alpha_0 \in (0, \alpha_{\mathrm{cr}})\). Then there exists a constant \(\delta = \delta(\alpha_0) > 0\) such that the following holds. Let \((\eta, \zeta, \vartheta, \alpha)\) be a solution to \eqref{Non-dimensionalization equations 1} with \(0 < \alpha \leq \alpha_0\), and define \(v\) and \(w\) by \eqref{Velocity field and electric field} and \eqref{w equations 1}, respectively. Suppose that for some $M > 0$,
$$
\|w\|_{C^1(\mathcal{R}_M^+)} < \delta, \quad v \leq 0 \quad \text{on } L_M^+.
$$
Then either $v < 0$ in $\mathcal{R}_M^+ \cup \Gamma_M^+$, or $v \equiv 0$.
\end{lemma}
\begin{proof}
Choose \(\varepsilon, \delta > 0\) sufficiently small so that
\begin{align}
b:=\frac{u^2+\epsilon_1e_2^2- \eta_y (\gamma u + \alpha)(1 + \varepsilon)}{u^2}> 0 \quad \text{and} \quad u>0, {e_2} > 0 \quad \text{on } \Gamma_M^+. \label{0064.9}
\end{align}
As \(\varepsilon, \delta \to 0\), we have \(u, {e_2} \to 1\) and
\(b\to 1 - \gamma+\epsilon_1 - \alpha\) uniformly on \(\Gamma_M^+\).
Now define the auxiliary functions
\[
f := \frac{v}{y + \varepsilon} \quad\text{and} \quad g := \frac{-{e_1}}{y + \varepsilon}.
\]
It is obvious that $f$ and $v$ share the same sign, and similarly, $g$ and $-e_1$ share the same sign. Furthermore, $f, g\rightarrow 0$ as $x \to \infty$, as implied by \eqref{Velocity field and electric field equations 2}. A straightforward computation based on \eqref{0064.3} demonstrates that $f$ and $g$ satisfy the following elliptic problem
\begin{subequations}\label{0064.10}
\begin{align}
\Delta f + \frac{2}{y + \varepsilon} f_y &= 0 &&\text{in } \mathcal{R}_M^+, \label{0064.10a} \\
\Delta g + \frac{2}{y + \varepsilon} g_y &= 0 &&\text{in } \mathcal{R}_M^+, \label{0064.10b} \\
(1 + \varepsilon)\big(u^2 f_y - u v f_x +\epsilon_1ue_2g_y+\epsilon_1ue_1g_x\big)+ b f&= 0 &&\text{on } \Gamma_M^+, \label{0064.10c} \\
f &= 0 &&\text{on } \mathcal{B}_M^+, \label{0064.10d}\\
g &= 0 &&\text{on } \mathcal{B}_M^+.\label{0064.10e}
\end{align}
\end{subequations}
By \eqref{0064.9}, the coefficients of \(f\) and \(f_y\) in \eqref{0064.10c} are strictly positive. Assume, to the contrary, that $f \not\equiv 0$ and attains its nonnegative maximum at some point $(x_0, y_0) \in \mathcal{R}_M^+ \cup \Gamma_M^+$. The strong maximum principle implies that this maximum must occur on $\Gamma_M^+$, where the Hopf lemma gives $f_y(x_0, y_0) > 0$.

Similarly, following the reasoning in the proof of Lemma \ref{006lemma4.2}, there exists a continuous function $c(x)$ such that
$(u(x,1),v(x,1)) = c(x)(e_2(x,1), -e_1(x,1))$ for $x > M$. It follows that $f(x)=c(x)g(x)$ along the streamline. By Remark \ref{025remk2.1}, $u$ and $v$ do not vanish simultaneously on $\Gamma_M^+$, which implies $c(x) \neq 0$ on $\Gamma_M^+$. Furthermore, the asymptotic condition \eqref{Velocity field and electric field equations 2} ensures $c(x) > 0$.

Direct calculation yields $g_x = \frac{f_x c(x) - f(x) c'(x)}{(c(x))^2}$, which simplifies to $g_x(x_0, 1) = \frac{-f(x_0, 1) c'(x_0)}{(c(x_0))^2}$. The boundary condition \eqref{0064.10c} at $(x_0, 1)$ is given by
\begin{equation}\label{e510}
	(1+\varepsilon)u^2 f_y + (1+\varepsilon)\epsilon_1 u e_2 g_y + \left( b - (1+\varepsilon)\epsilon_1 u e_1 \frac{c'(x_0)}{c(x_0)^2} \right) f = 0.
\end{equation}
From Theorem \ref{006theorem3.8} (proved in Section \ref{025subsection5.4}), there exists $\alpha_0$ such that $\alpha \le \alpha_0 < \alpha_{\mathrm{cr}} = 1 - \gamma + \epsilon_1$. This guarantees that the zero-order coefficient $b$ satisfies $b \geq \alpha_{\mathrm{cr}} - \alpha_0 > 0$. Since $e_1 \to 0$ and $c'(x) \to 0$ as $x \to \infty$, there exists a constant $M_1 > 0$ such that for all $x_0 > M_1$, $(1+\varepsilon)\epsilon_1 u e_1 \frac{c'(x_0)}{c(x_0)^2}$ is strictly bounded by $\frac{1}{2}(\alpha_{\mathrm{cr}} - \alpha_0)$. Consequently, for any $M \ge M_1$, the coefficient of $f$ in \eqref{e510} is strictly positive.

On the other hand, we claim that $g_y(x_0, 1) \ge 0$. Note that $g$ satisfies \eqref{0064.10b} in $\mathcal{R}_M^+$ and vanishes at infinity. Since $c(x) = u(x,1)/e_2(x,1) > 0$ on $\Gamma_M^+$, the condition $f(x_0, 1) \ge 0$ ensures $g(x_0, 1) = f(x_0, 1)/c(x_0) \ge 0$.
If $f(x_0, 1) = 0$, then $f \le 0$ everywhere, implying $g(x, 1) \le 0$ on the boundary. By the maximum principle, $g \le 0$ globally, which makes $(x_0, 1)$ a global maximum for $g$ (since $g(x_0, 1) = 0$). In this case, the Hopf boundary point lemma immediately yields $g_y(x_0, 1) \ge 0$. On the other hand, if $f(x_0, 1) > 0$, then $g(x_0, 1) > 0$. By the strong maximum principle, $g$ must attain its positive maximum on the boundary $\Gamma_M^+$. Since $c(x) \to 1$ and $c'(x) \to 0$ as $x \to \infty$, the asymptotic behavior of $g(x,1) = f(x,1)/c(x)$ coincides with that of $f(x,1)$. Consequently, there exists a constant $M_2 > 0$ such that for any $M \ge M_2$, $g(x,1)$ attains a local maximum at a point $x_*$ with $|x_* - x_0| \to 0$ as $M \to \infty$. At this boundary maximum $x_*$, the Hopf lemma forces that $g_y(x_*, 1) > 0$. By the uniform continuity of the gradient $\nabla g$, it follows that $g_y(x_0, 1) \ge 0$. This establishes the claim.
	
Since $f(x_0, 1) \geq 0$, taking $M \ge \max\{M_1, M_2\}$ sufficiently large ensures that the left-hand side of \eqref{e510} is strictly positive at point $(x_0,1)$, yielding a contradiction.
\end{proof}

With these intermediate results established, we proceed to complete the proof.
\begin{proof}[Proof of Lemma \ref{006lemma4.3}]
Fix $\eta^*, \zeta^*, \vartheta^*$ and $\alpha^*$ as in the statement, and recall that the associated function $w^*$ is defined in \eqref{w equations 1}. Let $\alpha_0 \in (\alpha^*, 1 - \gamma+\epsilon_1)$, and choose $M > 0$ such that $\|w^*\|_{C^1(\mathcal{R}_M^+)} < \tfrac{1}{2} \delta$, where $\delta = \delta(\alpha_0) > 0$ is the constant provided by Lemma \ref{006lemma4.6}.
Next, select $\varepsilon_M > 0$ such that Lemma \ref{006lemma4.5} holds with $w = w^*$, and define $
\varepsilon := \min\left( \varepsilon_M, \tfrac{1}{2} \delta, |\alpha_0 - \alpha^*| \right)$. Then, by Lemma \ref{006lemma4.5}, \eqref{0064.6} is satisfied. Specifically, $v \leq 0$ on $L_M^+$. Since $\|w\|_{C^1(\mathcal{R}_M^+)} < \delta$, Lemma \ref{006lemma4.6} further implies that $v < 0$ in $\mathcal{R}_M^+ \cup \Gamma_M^+$. The desired result now follows by combining \eqref{0064.6}.
\end{proof}

\begin{lemma}\label{006lemma7.5}
The nodal property \eqref{0064.1} holds along the global bifurcation curve $\mathscr{C}$.
\end{lemma}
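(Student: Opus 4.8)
The plan is a standard open--closed (connectedness) argument along the global curve $\mathscr{C}$ from Theorem \ref{006theorem7.4}. Set
$$\mathscr{S} := \{\, s \in (0, \infty) : (w(s), \alpha(s)) \text{ satisfies } \eqref{0064.1} \,\}.$$
Since $(0,\infty)$ is connected, it suffices to show that $\mathscr{S}$ is nonempty, relatively open and relatively closed. Nonemptiness is immediate: $\mathscr{C}_{\mathrm{loc}}\subset\mathscr{C}$ by Theorem \ref{006theorem7.4}, and every solution on $\mathscr{C}_{\mathrm{loc}}$ satisfies \eqref{0064.1} by Theorem \ref{006theorem7.1}(i), so $\mathscr{S}\neq\emptyset$. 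I will use throughout that $0<\alpha(s)<\alpha_{\mathrm{cr}}$ along $\mathscr{C}$ (since $\mathscr{C}\subset\mathcal{U}$ and $N(s)<\infty$ for finite $s$), and that on a nontrivial solution the nodal condition \eqref{0064.1} is equivalent to \eqref{0064.2}, equivalently \eqref{0064.2add}: one direction is Lemma \ref{006lemma4.1} combined with the evenness of $\eta$ in $x$, while the other follows because $v$ and $\eta_x$ have the same sign on $\Gamma$ together with the strong maximum principle in $\mathcal{R}^+$.

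For relative openness, note that $s\mapsto(w(s),\alpha(s))$ is continuous into $\mathcal{X}\times\mathbb{R}$ and that $\mathcal{X}\hookrightarrow (C_{\mathrm b}^{3+\beta}(\overline{\mathcal{R}}))^3$, so a small change of $s$ produces a small change of $(\eta,\zeta,\vartheta,\alpha)$ in $C^3(\mathcal{R})\times\mathbb{R}$ (the quotients in \eqref{Velocity field and electric field} defining $v$ and $e_1$ being controlled via \eqref{0253.2}). Given $s_0\in\mathscr{S}$, the solution at $s_0$ satisfies \eqref{0064.2} and $\alpha(s_0)<\alpha_{\mathrm{cr}}$, so Lemma \ref{006lemma4.3} provides an interval around $s_0$ on which \eqref{0064.2}, and hence \eqref{0064.1}, persists. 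Thus $\mathscr{S}$ is relatively open.

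For relative closedness, take $s_n\in\mathscr{S}$ with $s_n\to s_*\in(0,\infty)$. Passing to the limit in $\mathcal{X}\times\mathbb{R}$ gives $v(s_n)\to v(s_*)$ in $C^1(\mathcal{R})$, and since each $v(s_n)<0$ on $\Gamma^+$ we obtain $v(s_*)\le 0$ on $\Gamma^+$. By Lemma \ref{006lemma4.2}, either \eqref{0064.2} holds at $s_*$ --- in which case Lemma \ref{006lemma4.1} and evenness give \eqref{0064.1}, so $s_*\in\mathscr{S}$ --- or else $v(s_*)\equiv 0$. The second case is to be excluded. If $v(s_*)\equiv 0$, then the holomorphic function $(\zeta_y+i\zeta_x)/(\eta_y+i\eta_x)$, which is well defined on $\mathcal{R}$ by \eqref{0253.2}, has identically vanishing imaginary part and is therefore a real constant; substituting this into the boundary conditions of \eqref{Non-dimensionalization equations 1} and invoking the asymptotics $\eta\to y$, $\zeta\to(1-\gamma)y$, $\vartheta\to y$ forces $w(s_*)=0$. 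But at the trivial solution $(0,\alpha(s_*))$ with $0<\alpha(s_*)<\alpha_{\mathrm{cr}}$ the operator $\mathscr{F}_w(0,\alpha(s_*))$ is invertible by Lemma \ref{006lemma5.6}, so the implicit function theorem yields a neighborhood of $(0,\alpha(s_*))$ in $\mathcal{X}\times\mathbb{R}$ whose only solutions of \eqref{0253.1} are the trivial ones; since $w(s_n)\to w(s_*)=0$ in $\mathcal{X}$, for $n$ large $(w(s_n),\alpha(s_n))$ lies in this neighborhood, forcing $w(s_n)\equiv 0$, which contradicts the strict inequality \eqref{0064.1}. Hence $v(s_*)\not\equiv 0$ and $s_*\in\mathscr{S}$, so $\mathscr{S}$ is relatively closed and therefore $\mathscr{S}=(0,\infty)$.

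I expect the closedness step to be the crux, specifically ruling out the degenerate limit $v\equiv 0$: this is exactly where loss of compactness at infinity and the electric-field coupling could intrude, and one must combine Lemma \ref{006lemma4.2} with the conformal structure to identify $v\equiv 0$ with a laminar flow, and then exploit invertibility of the linearization along the trivial branch (Lemma \ref{006lemma5.6}) to prevent $\mathscr{C}$ from returning there. The openness and nonemptiness parts are routine consequences of Lemma \ref{006lemma4.3} and Theorem \ref{006theorem7.1}.
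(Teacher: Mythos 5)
Your proof is correct and follows essentially the same open--closed strategy as the paper, invoking the same lemmas (Lemma \ref{006lemma4.2} for closedness, Lemma \ref{006lemma4.3} for openness, Theorem \ref{006theorem7.1}(i) for nonemptiness, and Lemma \ref{006lemma5.6} plus the implicit function theorem to exclude triviality). The only structural difference is organizational: the paper first runs a separate open--closed argument to show $\mathscr{C}$ contains no trivial solutions, and then the closedness of the nodal set never has to face the $v\equiv 0$ alternative in Lemma \ref{006lemma4.2}, whereas you fold the exclusion of triviality directly into the closedness step. A small virtue of your version is that you spell out, via the holomorphic quotient $(\zeta_y+i\zeta_x)/(\eta_y+i\eta_x)$ and the asymptotics, why $v\equiv 0$ actually forces $w\equiv 0$ --- a fact the paper relies on but leaves implicit when it passes from ``no trivial solutions'' to applying Lemma \ref{006lemma4.2}. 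Both routes buy the same thing at the same cost; neither is more general.
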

\begin{proof}
By Theorem \ref{006theorem7.1}, the nodal property \eqref{0064.1} holds along the local curve \(\mathscr{C}_{\mathrm{loc}}\). We begin by proving that the global curve \(\mathscr{C}\) does not contain any trivial solutions \((0, \alpha)\). By the definition of \(\mathcal{U}\), all solutions along \(\mathscr{C}\) satisfy \(\alpha < \alpha_{\mathrm{cr}}\). Let \(\mathscr{T}\) denote the relatively closed set of all trivial solutions in \(\mathscr{C}\). Since \(\mathscr{C}\) is closed, \(\mathscr{T}\) is relatively closed in \(\mathscr{C}\). Moreover, Lemma \ref{006lemma5.6} shows that the linearized operator \(\mathscr{F}_w(0, \alpha)\) is invertible for all \(\alpha < \alpha_{\mathrm{cr}}\). Therefore, by the implicit function theorem, trivial solutions form locally unique continuous curves parametrized by \(w = w(\alpha)\), with \(\alpha < \alpha_{\mathrm{cr}}\). It follows that \(\mathscr{T}\) is relatively open in \(\mathscr{C}\). Since \(\mathscr{C}\) is connected, this implies that either (i) \(\mathscr{C}\) consists entirely of trivial solutions, or (ii) \(\mathscr{C}\) contains no trivial solutions. The first case is ruled out by the inclusion \(\mathscr{C}_{\mathrm{loc}} \subset \mathscr{C}\), as \(\mathscr{C}_{\mathrm{loc}}\) contains nontrivial solutions. Hence, \(\mathscr{C}\) contains no trivial solutions.\par
Next, define \(\mathscr{N} \subset \mathscr{C}\) as the subset of all \((w, \alpha) \in \mathscr{C}\) satisfying the nodal property \eqref{0064.1}. Since \(\mathscr{C}\) is connected in \(\mathcal{X} \times \mathbb{R}\)\ and contains no trivial solutions, Lemmas \ref{006lemma4.2} and \ref{006lemma4.3} imply that \(\mathscr{N}\) is both relatively open and closed in \(\mathscr{C}\). Because \(\mathscr{C}_{\mathrm{loc}} \subset \mathscr{N}\), it follows that \(\mathscr{N} \neq \varnothing\). By connectedness, we conclude that \(\mathscr{N} = \mathscr{C}\), i.e., the nodal property \eqref{0064.1} holds along the entire global curve.
\end{proof}
\subsection{Nonexistence of monotone bores}\label{025subsection5.2}
In this subsection, we establish the nonexistence of monotone bores, as stated in Theorem \ref{006theorem3.4}. At the same time, we lay the groundwork for excluding loss of compactness. The proof relies on key properties of the Bernoulli constant \eqref{0063.14} and the flow force \eqref{0063.15}, which are derived in Lemmas \ref{006lemma3.2} and \ref{006lemma3.3}.\par

Traditionally, the flow force defined by
$$
\int\left(P-P_{\text{atm}}+U^2\right)\, dY,
$$
plays an important role in the analysis of steady waves, where $P_{\text{atm}}$ is atmospheric pressure at the surface. Since we are studying EHD waves with overhanging profiles, the present definition of the flow force is motivated by the divergence form of the momentum equations in the case $\epsilon_{1}=0$. Accordingly, we define the flow force as
\begin{align}
S=\int_{x=\mathrm{constant}}
\begin{pmatrix}P-P_{\text{atm}}+U^2+\epsilon_1{E_2}^2
\end{pmatrix}dY-(UV-\epsilon_1{E_1}{E_2})dX.\notag
\end{align}
\par

In terms of the dimensionless variables,  \eqref{modified Bernoulli law} takes the form
$$
P+\frac{1}{2}(u^2+v^2)+\frac{\epsilon_1}{2}({e_1}^2+{e_2}^2)+\alpha (\eta-1)=\gamma\psi+\frac{1}{2}+\frac{\epsilon_1}{2}+P_{\text{atm}}-\gamma(1-\frac{1}{2}\gamma),
$$
which is equivalent to
\begin{align}
 P-P_{\text{atm}}+u^2+\epsilon_1{e_2}^2=\frac{1}{2}(u^2-v^2)+\frac{\epsilon_1}{2}({e_2}^2-{e_1}^2)-\alpha (\eta-1)+\gamma\psi+\frac{1}{2}+\frac{\epsilon_1}{2}-\gamma(1-\frac{1}{2}\gamma).\notag
\end{align}
Then, we have
\begin{equation}
\begin{split}
S=  \int_{0}^{1} \bigg(&\big(\frac{1}{2}(u^2-v^2)+\frac{\epsilon_1}{2}({e_2}^2-{e_1}^2)-\alpha (\eta-1)+\gamma(\zeta+\frac{1}{2} \gamma \eta^2-1+\frac{1}{2}\gamma)\\&\quad+\frac{1}{2}+\frac{\epsilon_1}{2}\big)\eta_y+ (uv-\epsilon_1{e_1}{e_2}) \eta_x \bigg)d y.
\end{split} \label{0063.4-}
\end{equation}
A careful calculation yields
\begin{equation}
\begin{split}
&\frac{1}{2}(u^2-v^2)\eta_y+\frac{\epsilon_1}{2}({e_2}^2-{e_1}^2)\eta_y+u v\eta_x-\epsilon_1{e_1}{e_2}\eta_x\\&=\frac{1}{2} \frac{\eta_{y}\left(\zeta_{y}^{2}-\zeta_{x}^{2}\right)+2 \eta_{x} \zeta_{x} \zeta_{y}}{\eta_{x}^{2}+\eta_{y}^{2}} +\frac{\epsilon_1}{2} \frac{\eta_{y}}{\eta_{x}^{2}+\eta_{y}^{2}}+\gamma\eta\zeta_y+\frac{1}{2}\gamma^2\eta^2\eta_y.
\end{split}\label{0063.4--}
\end{equation}
From \eqref{0063.4-} and \eqref{0063.4--}, we obtain
\begin{equation}
\begin{split}
S(x ; \eta, \zeta, \alpha):= & \frac{1}{2} \int_{0}^{1} \bigg(\frac{\eta_{y}\left(\zeta_{y}^{2}-\zeta_{x}^{2}\right)+2 \eta_{x} \zeta_{x} \zeta_{y}}{\eta_{x}^{2}+\eta_{y}^{2}} \bigg)\, d y +\frac{\epsilon_1}{2} \int_{0}^{1} \bigg( \frac{\eta_{y}}{\eta_{x}^{2}+\eta_{y}^{2}} \bigg)\, d y \\&-\left.\left(\frac{\gamma^{2}}{6} \eta^{3}+\frac{\alpha}{2} \eta^{2}-\frac{2\alpha+1+\epsilon_1}{2} \eta\right)\right|_{y=1}.
\end{split}\label{0063.4}
\end{equation}\par
To demonstrate that $S$ is independent of $x$, we first observe that the integrand in \eqref{0063.4} is the real part of holomorphic functions
\begin{equation}
\begin{split}
&\frac{{(\zeta_y+i\zeta_x)}^2}{\eta_y+i\eta_x}=\frac{\eta_y(\zeta_y^2-\zeta_x^2)+2\eta_x\zeta_x\zeta_y}{\eta_x^2+\eta_y^2}
+i\frac{2\eta_y\zeta_x\zeta_y-\eta_x({\zeta_y}^2-{\zeta_x}^2)}{\eta_x^2+\eta_y^2},\\
&\frac{1}{\eta_y+i\eta_x}=\frac{\eta_y}{\eta_x^2+\eta_y^2}
+i\frac{-\eta_x}{\eta_x^2+\eta_y^2}.
\end{split}\label{0063.0004}
\end{equation}
Then, by differentiating under the integral and using the Cauchy-Riemann equations, we conclude that
\begin{equation}
\begin{split}
&\frac{d}{dx}\int_{0}^{1}\frac{1}{2}\frac{\eta_{y}(\zeta_{y}^{2}-\zeta_{x}^{2})
+2\eta_{x}\zeta_{x}\zeta_{y}}{\eta_{x}^{2}+\eta_{y}^{2}}dy=\frac{1}{2}\frac{2\eta_{y}\zeta_{x}\zeta_{y}
-\eta_{x}(\zeta_{y}^{2}-\zeta_{x}^{2})}{\eta_{x}^{2}+\eta_{y}^{2}}\bigg|_{y=1},
\\&\frac{d}{dx}\int_{0}^{1}\frac{1}{2}\frac{\eta_{y}}{\eta_{x}^{2}+\eta_{y}^{2}}dy=
\frac{1}{2}\frac{-\eta_{x}}{\eta_{x}^{2}+\eta_{y}^{2}}\bigg|_{y=1}.
\end{split}\notag
\end{equation}
From \eqref{Non-dimensionalization equation d}, \eqref{Non-dimensionalization equation e} and \eqref{Non-dimensionalization equation f add}, we have
\begin{equation}
\begin{split}
\frac{1}{2}\frac{2\eta_{y}\zeta_{x}\zeta_{y}
-\eta_{x}(\zeta_{y}^{2}-\zeta_{x}^{2})}{\eta_{x}^{2}+\eta_{y}^{2}}
+\frac{\epsilon_1}{2}\frac{-\eta_{x}}{\eta_{x}^{2}+\eta_{y}^{2}}= \bigg(\frac{\gamma^{2}}{2} \eta^{2}+ \alpha \eta-\frac{2\alpha +1+\epsilon_1}{2}\bigg)\eta_x.
\end{split}\notag
\end{equation}
Substituting this into \eqref{0063.4} yields that $S$ is independent of $x$.
\begin{lemma}\label{006lemma3.2}
Let \(\hat{Q}\) be the function defined in \eqref{0063.14}, and suppose the asymptotic depth satisfies \(d > 0\). Then \(\hat{Q}\) is strictly convex and attains its unique minimum at \(d = d_{\mathrm{cr}}\). Moreover, there exists a unique value \(d_{*}\) such that \(\hat{Q}(d_{*}) = \hat{Q}(1)\). If \(\alpha < \alpha_{\mathrm{cr}}\), then \(d_{*} \in (d_{\mathrm{cr}}, \infty)\), whereas for \(\alpha > \alpha_{\mathrm{cr}}\), we have \(d_{*} \in (0, d_{\mathrm{cr}})\).
\end{lemma}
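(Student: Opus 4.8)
The plan is to rewrite $\hat Q$ in a manifestly convex form and then read off every assertion of the lemma from its first two derivatives. First I would expand the square in \eqref{0063.14}: writing out $\big(\tfrac{2-\gamma}{2}+\tfrac{\gamma d^2}{2}\big)^2$ and dividing by $d^2$ turns \eqref{0063.14} into
\[
\hat Q(d)=\frac{a}{d^2}+b\,d^2+2\alpha d+c,\qquad a:=\tfrac{(2-\gamma)^2}{4}+\epsilon_1,\quad b:=\tfrac{\gamma^2}{4},\quad c:=\tfrac{\gamma(2-\gamma)}{2}-2\alpha.
\]
Since $\epsilon_1>0$ we have $a>0$ (and $b\ge 0$), so $\hat Q''(d)=6a\,d^{-4}+2b>0$ for every $d>0$, which is strict convexity. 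Using $\alpha=1/F^2>0$, the term $a/d^2$ forces $\hat Q(d)\to\infty$ as $d\to 0^+$ and the term $2\alpha d$ forces $\hat Q(d)\to\infty$ as $d\to\infty$, so $\hat Q$ is coercive. A strictly convex, coercive function on $(0,\infty)$ has a unique global minimiser; call it $d_{\mathrm{cr}}$, the unique zero of $\hat Q'$, and note that $\hat Q$ is strictly decreasing on $(0,d_{\mathrm{cr}})$ and strictly increasing on $(d_{\mathrm{cr}},\infty)$.

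The one computation carrying real content is the location of $d=1$ relative to $d_{\mathrm{cr}}$. From $\hat Q'(d)=-2a\,d^{-3}+2bd+2\alpha$ and the identity $(2-\gamma)^2=4-4\gamma+\gamma^2$, after cancellation one obtains $\hat Q'(1)=2(\alpha-\alpha_{\mathrm{cr}})$ with $\alpha_{\mathrm{cr}}=1-\gamma+\epsilon_1$. Since $\hat Q'$ is strictly increasing and vanishes at $d_{\mathrm{cr}}$, the sign of $\hat Q'(1)$ pins down $1$: if $\alpha<\alpha_{\mathrm{cr}}$ then $\hat Q'(1)<0$, hence $1<d_{\mathrm{cr}}$ and $1$ lies on the decreasing branch; if $\alpha>\alpha_{\mathrm{cr}}$ then $\hat Q'(1)>0$, hence $d_{\mathrm{cr}}<1$ and $1$ lies on the increasing branch.

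Finally I would extract $d_*$ from the level set $\{d>0:\hat Q(d)=\hat Q(1)\}$. By strict convexity and coercivity this level set consists of exactly two points, one on each monotone branch; one of them is $d=1$ itself, and I define $d_*$ to be the other, which therefore sits on the branch opposite to $1$. Existence of $d_*$ is the intermediate value theorem on that branch, where $\hat Q$ runs continuously and monotonically from $\hat Q(d_{\mathrm{cr}})<\hat Q(1)$ up to $+\infty$, and uniqueness is the monotonicity on that branch. Combining with the previous paragraph: when $\alpha<\alpha_{\mathrm{cr}}$, $d=1$ is on the decreasing branch so $d_*$ is on the increasing branch, i.e.\ $d_*\in(d_{\mathrm{cr}},\infty)$; when $\alpha>\alpha_{\mathrm{cr}}$, $d=1$ is on the increasing branch so $d_*\in(0,d_{\mathrm{cr}})$.

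I do not expect a genuine obstacle here: the whole statement is the standard behaviour of a strictly convex, coercive function on a half-line. The only step that needs care is the algebraic simplification $\hat Q'(1)=2(\alpha-\alpha_{\mathrm{cr}})$, which is exactly what ties the abstract convexity picture to the critical Froude number, so I would display that computation in full and treat the rest as routine.
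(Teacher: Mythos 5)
Your proof is correct and takes essentially the same route as the paper: both compute $\hat Q'(1)=2(\alpha-\alpha_{\mathrm{cr}})$ and $\hat Q''(d)>0$, then invoke strict convexity together with $\hat Q\to\infty$ at both ends of $(0,\infty)$. The paper's own proof in fact stops after establishing the unique minimiser $d_{\mathrm{cr}}$, leaving the existence, uniqueness, and placement of $d_*$ implicit; your level-set and intermediate-value argument on the two monotone branches supplies exactly the omitted portion, so if anything your write-up is the more complete one.
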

\begin{proof}
We begin by differentiating equation \eqref{0063.14} twice with respect to \(d\). The first derivative evaluated at \(d = 1\) is
\[
\hat{Q}'(1) = 2\big(\alpha - \alpha_{\mathrm{cr}}\big).
\]
The second derivative takes the form
\begin{equation}
\hat{Q}''(d) = \frac{3(2 - \gamma)^2}{2d^4} + \frac{\gamma^2}{2} + \frac{6\epsilon_1}{d^4} > 0. \label{0063.4++}
\end{equation}
This confirms that \(\hat{Q}\) is strictly convex for all \(d > 0\). Moreover, since \(\hat{Q}(d) \to \infty\) as \(d \to 0\) or \(d \to \infty\), the function must attain a unique minimum at some finite value \(d = d_{\mathrm{cr}}\).
\end{proof}
\begin{lemma}\label{006lemma3.3}
Let \(\hat{S}(d)\) be defined by \eqref{0063.15}. Then its derivative with respect to \(d\) is given by
\begin{align}
\hat{S}'(d) = \frac{1}{2} \left( \hat{Q}(1) - \hat{Q}(d) \right). \label{0063.17}
\end{align}
In particular, by the strict convexity of \(\hat{Q}\), it follows that \(\hat{S}(d_{*}) > \hat{S}(1)\) whenever \(\alpha < \alpha_{\mathrm{cr}}\), and \(\hat{S}(d_{*}) < \hat{S}(1)\) if \(\alpha > \alpha_{\mathrm{cr}}\).
\end{lemma}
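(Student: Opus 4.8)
The plan is to reduce everything to an elementary one-variable computation. First I would substitute the translating profiles $\hat{\eta}_{\mathrm{tr}}(y;d)=dy$, $\hat{\zeta}_{\mathrm{tr}}(y;d)=\bigl(\tfrac{2-\gamma}{2d}-\tfrac{\gamma d}{2}\bigr)dy$ and $\hat{\vartheta}_{\mathrm{tr}}(y;d)=y$ into the flow-force functional \eqref{0063.4}. Because these profiles are affine in $y$ and independent of $x$, every $x$-derivative in \eqref{0063.4} drops out, and the remaining $y$-derivatives are the constants $\partial_y\hat{\eta}_{\mathrm{tr}}=d$, $\partial_y\hat{\zeta}_{\mathrm{tr}}=\tfrac{2-\gamma-\gamma d^2}{2}$, $\partial_y\hat{\vartheta}_{\mathrm{tr}}=1$; the two integrals then collapse to elementary ones, and together with the boundary term evaluated at $\eta=d$ this yields the explicit expression
\[
\hat{S}(d)=\frac{(2-\gamma-\gamma d^2)^2}{8d}+\frac{\epsilon_1}{2d}-\frac{\gamma^2}{6}d^3-\frac{\alpha}{2}d^2+\frac{2\alpha+1+\epsilon_1}{2}d.
\]

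Next I would differentiate $\hat{S}$ in $d$. After collecting the $\gamma^2 d^2$ terms, the derivative can be written as
\[
\hat{S}'(d)=-\frac{1}{8d^2}\bigl((2-\gamma)^2+2(2-\gamma)\gamma d^2+\gamma^2 d^4\bigr)-\frac{\epsilon_1}{2d^2}-\alpha d+\frac{2\alpha+1+\epsilon_1}{2}.
\]
The key observation is that the bracket is a perfect square, $(2-\gamma)^2+2(2-\gamma)\gamma d^2+\gamma^2 d^4=(2-\gamma+\gamma d^2)^2$, which is precisely the numerator appearing in the definition \eqref{0063.14} of $\hat{Q}$. Matching term by term against \eqref{0063.14}, the identity $\hat{S}'(d)=-\tfrac12\hat{Q}(d)+\tfrac{1+\epsilon_1}{2}$ falls out, and since a direct evaluation gives $\hat{Q}(1)=\bigl(\tfrac{2-\gamma}{2}+\tfrac{\gamma}{2}\bigr)^2+\epsilon_1=1+\epsilon_1$, this is exactly \eqref{0063.17}.

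For the sign statements I would feed in Lemma \ref{006lemma3.2}. If $\alpha<\alpha_{\mathrm{cr}}$ then $\hat{Q}'(1)=2(\alpha-\alpha_{\mathrm{cr}})<0$, so $1<d_{\mathrm{cr}}$, while Lemma \ref{006lemma3.2} places the second root $d_*$ of $\hat{Q}(\cdot)=\hat{Q}(1)$ in $(d_{\mathrm{cr}},\infty)$; hence $1<d_*$, and strict convexity forces $\hat{Q}(d)<\hat{Q}(1)$ for every $d\in(1,d_*)$, so by \eqref{0063.17} we get $\hat{S}'>0$ on $(1,d_*)$ and therefore $\hat{S}(d_*)>\hat{S}(1)$. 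The case $\alpha>\alpha_{\mathrm{cr}}$ is the mirror image: now $d_*\in(0,d_{\mathrm{cr}})$ with $d_*<1$, one has $\hat{Q}(d)<\hat{Q}(1)$ on $(d_*,1)$, hence $\hat{S}'>0$ there and $\hat{S}(d_*)<\hat{S}(1)$.

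The only genuine obstacle is the algebraic bookkeeping in the middle step — in particular spotting the factorization $(2-\gamma)^2+2(2-\gamma)\gamma d^2+\gamma^2 d^4=(2-\gamma+\gamma d^2)^2$ so that $\hat{S}'$ can be matched against \eqref{0063.14}; once that is seen, the remaining identities are immediate, and the convexity input needed for the monotonicity conclusions is already packaged in Lemma \ref{006lemma3.2}. A slicker, coordinate-free derivation is conceivable via the Hamiltonian relationship between flow force and Bernoulli invariant for laminar flows, but for these explicit affine profiles the direct substitution is the most transparent route.
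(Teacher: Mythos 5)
Your proposal is correct and follows essentially the same route as the paper: substitute the affine laminar profiles \eqref{0063.12} into \eqref{0063.4} to get the explicit polynomial in $d$, differentiate, match against \eqref{0063.14} using $\hat{Q}(1)=1+\epsilon_1$, and then invoke Lemma \ref{006lemma3.2} together with strict convexity to read off the sign of $\hat{S}(d_*)-\hat{S}(1)$. The only cosmetic difference is that you spell out the perfect-square factorization and argue the sign via monotonicity of $\hat{S}$ on the interval between $1$ and $d_*$, whereas the paper writes the same inequality as $\hat{S}(d_*)-\hat{S}(1)=\frac{1}{2}\int_{1}^{d_*}(\hat{Q}(1)-\hat{Q}(s))\,ds$; these are the same argument.
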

\begin{proof}
By \eqref{0063.12} and \eqref{0063.4}, we compute
\[
\hat{S}(d) = \frac{(2 - \gamma)^2}{8d} - \frac{\gamma^2 d^3}{24} - \frac{(2 - \gamma)\gamma d}{4} - \frac{\alpha}{2} d^2 + \frac{2\alpha + 1+\epsilon_1}{2} d + \frac{\epsilon_1}{2d}.
\]
Differentiating with respect to \(d\) and applying the formula for \(\hat{Q}(d)\) from \eqref{0063.14}, we obtain \eqref{0063.17}.\par
Now suppose that \(\alpha < \alpha_{\mathrm{cr}}\). By Lemma~\ref{006lemma3.2}, \(\hat{Q}(d) < \hat{Q}(1)\) for all \(d \in (1, d_*)\), which implies
\[
\hat{S}(d_*) - \hat{S}(1) = \frac{1}{2} \int_{1}^{d_*} \left( \hat{Q}(1) - \hat{Q}(s) \right) ds > 0.
\]
\par
Conversely, if \(\alpha > \alpha_{\mathrm{cr}}\), then \(\hat{Q}(d) < \hat{Q}(1)\) holds for all \(d \in (d_*, 1)\), and we have
\[
\hat{S}(d_*) - \hat{S}(1) = \frac{1}{2} \int_{1}^{d_*} \left( \hat{Q}(1) - \hat{Q}(s) \right) ds = -\frac{1}{2} \int_{d_*}^{1} \left( \hat{Q}(1) - \hat{Q}(s) \right) ds < 0.
\]
This completes the proof.
\end{proof}
Since the domain under consideration is unbounded, the standard compact embeddings between H\"{o}lder spaces do not apply. However, for monotone waves, the only potential obstruction to compactness is the presence of a bore. The purpose of the following argument is to rule out this possibility.
\begin{proof}[\textbf{Proof of Theorem~\ref{006theorem3.4}}]
Suppose that \((\eta, \zeta, \vartheta)\) is a bore solution to \eqref{Non-dimensionalization equation a}-\eqref{Non-dimensionalization equation 2}. Then \eqref{0063.16} holds. Our goal is to show that \(\eta_{+} = \eta_{-}\), which is equivalent to proving
\[
\begin{aligned}
d_{+}\, y &= \hat{\eta}_{\mathrm{tr}}(y; d_{+}) \\
          &= \hat{\eta}_{\mathrm{tr}}(y; d_{-}) \\
          &= d_{-}\, y,
\end{aligned}
\]
that is, \(d_{+} = d_{-}\).\par
We first consider \(\alpha = \alpha_{\mathrm{cr}}\). Then Lemmas \ref{006lemma3.2} and \ref{006lemma3.3} immediately yield \(d_{+} = d_{-} = d_{*} = 1\).\par
Now assume \(\alpha \neq \alpha_{\mathrm{cr}}\). According to \eqref{0063.13} and Lemma \ref{006lemma3.2}, we know that
\[
d_{\pm} \in \{1, d_{*}\}.
\]
We claim that
\begin{align}
d_{-} = d_{*} \quad \text{or} \quad d_{+} = d_{*}. \label{0063.18+-}
\end{align}
Indeed, if \(d_{-} = 1\), then necessarily \(d_{+} = d_{*}\). Combining \eqref{0063.13}, \eqref{0063.16}, and \eqref{0063.18+-}, we find
\[
\hat{Q}(d_{*}) = \hat{Q}(1), \quad \hat{S}(d_{*}) = \hat{S}(1),
\]
which contradicts Lemma \ref{006lemma3.3}. Therefore, the only possibility is \(d_{-} = d_{+} = 1\), completing the proof.
\end{proof}
\subsection{Compactness and uniform regularity}\label{025subsection5.3}
The following lemma, together with Lemma \ref{006lemma7.5}, enables us to rule out alternative (ii) in Theorem \ref{006theorem7.4}.
\begin{lemma}[Compactness]\label{006Lemma 3.5}
Let
\((\eta_n, \zeta_n, \vartheta_n, \alpha_n)\) be a sequence of solutions to \eqref{Non-dimensionalization equations 1} satisfying
\begin{align}
\sup_{n} \|(\eta_n, \zeta_n, \vartheta_n, \alpha_n)\|_{C^{3+\beta}(\mathcal{R}) \times \mathbb{R}} < \infty, \quad
\inf_{n} \inf_{\mathcal{R}} \left(1 + \epsilon_1 - 2\alpha_n(\eta_n - 1)\right)^2 |\nabla \eta_n|^2> 0, \label{0063.18}
\end{align}
along with the monotonicity condition
\begin{align}
\partial_x \eta_n \leq 0 \quad \text{for } x \geq 0. \label{0063.19}
\end{align}
Then there exists a subsequence-still denoted by \((\eta_n, \zeta_n, \vartheta_n)\)-that converges to a limit \((\eta, \zeta, \vartheta)\) in \(C_{\mathrm{b}}^{3+\beta}(\overline{\mathcal{R}})\).
\end{lemma}
\begin{proof}
Without loss of generality, we assume that \(\alpha_n \to \alpha \in \mathbb{R}\). Our first claim is the asymptotic convergence
\begin{align}
\lim_{x \to \infty} \sup_n \sup_y \left|(\eta_n, \zeta_n, \vartheta_n)(x, y) - (y, (1-\gamma)y, y)\right| = 0. \label{0063.20}
\end{align}
We postpone the proof of this claim and proceed under its assumption.
\par
By Arzel\`{a}-Ascoli theorem and the uniform \(C^{3+\beta}(\mathcal{R})\) bounds, we extract a subsequence such that
\[
(\eta_n, \zeta_n, \vartheta_n) \to (\eta, \zeta, \vartheta) \quad \text{in } C_{\mathrm{loc}}^3(\overline{\mathcal{R}}) \text{ and } L^\infty(\overline{\mathcal{R}}),
\]
where \((\eta, \zeta, \vartheta)\) solves \eqref{Non-dimensionalization equations 1}. As a result, the differences
\[
v_n^{(1)} := \eta_n - \eta, \quad v_n^{(2)} := \zeta_n - \zeta, \quad v_n^{(3)} := \vartheta_n - \vartheta=0
\]
satisfy
\begin{align}
\|(v_n^{(1)}, v_n^{(2)}, v_n^{(3)})\|_{L^\infty(\mathcal{R})} \to 0 \quad \text{as } n \to \infty. \label{0063.21}
\end{align}
\par
To prove convergence in \(C_{\mathrm{b}}^{3+\beta}(\overline{\mathcal{R}})\), we examine the behavior on the surface \(\Gamma\), where the differences \(v_n^{(1)}, v_n^{(2)}\) satisfy the linearized boundary system
\begin{equation}
\begin{aligned}
a_{11} \partial_x v_n^{(1)} + a_{12} \partial_y v_n^{(1)} + a_{22} \partial_y v_n^{(2)} + b_1 v_n^{(1)} &= f_n, \\
c_1 v_n^{(1)} + c_2 v_n^{(2)} &= 0,
\end{aligned}
\end{equation}
with the coefficients defined by
\begin{align*}
a_{11} &= \left(2\alpha_n(\eta_n - 1) + 2\alpha(\eta - 1) - 2 - 2\epsilon_1\right) \eta_x, \\
a_{12} &= \left(2\alpha_n(\eta_n - 1) + 2\alpha(\eta - 1) - 2 - 2\epsilon_1\right) \eta_y + a_{22} c_1, \\
a_{22} &= \zeta_{ny} + \gamma \eta_n \eta_{ny} + \zeta_y + \gamma \eta \eta_y,  \\
b_1 &= \gamma (\eta_{ny} + \eta_y + \eta + \eta_n) a_{22}, \quad c_1 = \gamma (\eta + \eta_n), \quad c_2 = 1.
\end{align*}
From the uniform bounds in \eqref{0063.18}, all coefficients and \(f_n\) are uniformly bounded in \(C_{\mathrm{b}}^{2+\beta}(\Gamma)\). Moreover, we have that
\[
\left(c_1a_{21}-c_2a_{11}\right)^2+\left(c_1a_{22}-c_2a_{12}\right)^2=\left(2\alpha_n(\eta_n - 1) + 2\alpha(\eta - 1) - 2 - 2\epsilon_1\right)^2 (\eta_x^2 + \eta_y^2) \geq \delta \quad \text{on } \Gamma
\]
for some fixed \(\delta > 0\), where $a_{21}=0$ and the last inequality is based on \eqref{Regional boundary condition} and \eqref{0063.18}.
Thus, applying \cite[Theorem A.1]{SVHMHW2023}, together with the asymptotic condition \eqref{w equation 2} and the convergence \eqref{0063.21}, we obtain
\[
\|(v_n^{(1)}, v_n^{(2)}, v_n^{(3)})\|_{C^{3+\beta}(\mathcal{R})} \leq C\left( \|f_n\|_{C^{2+\beta}(\Gamma)} + \|(v_n^{(1)}, v_n^{(2)}, v_n^{(3)})\|_{L^\infty(\mathcal{R})} \right) \to 0,
\]
as \(n \to \infty\). Hence,
\[
(\eta_n, \zeta_n, \vartheta_n) \to (\eta, \zeta, \vartheta) \quad \text{in } C_{\mathrm{b}}^{3+\beta}(\overline{\mathcal{R}}).
\]
\par
We now prove the claim \eqref{0063.20} by contradiction. Suppose that \eqref{0063.20} fails. Then there exist a sequence \(\{(x_n, y_n)\} \subset \mathbb{R}^2\) with \(x_n \to \infty\) and a constant \(\varepsilon > 0\) such that
\[
\left|(\eta_n, \zeta_n, \vartheta_n)(x_n, y_n) - (y_n, (1 - \gamma)y_n, y_n)\right| \geq \varepsilon
\]
for all \(n\). Using a translation argument and the monotonicity condition \eqref{0063.19}, we extract a limiting profile that yields a bore-type solution to equations \eqref{Non-dimensionalization equation a}-\eqref{Non-dimensionalization equation 2} as \(n \to \infty\). This contradicts Theorem \ref{006theorem3.4}, which excludes the existence of such bore solutions. Therefore, \eqref{0063.20} must hold.
\end{proof}
We assume that alternative (i) does not hold. If alternative (i) does not hold, then alternative (ii) in Theorem \ref{006theorem7.4} must be valid.
\begin{rmk}
The fact that alternative (i) in Theorem \ref{006theorem7.4} does not occur guarantees the validity of the first condition in \eqref{0063.18}.
\end{rmk}
\begin{lemma}\label{006lemma7.6}
Alternative (ii) in Theorem \ref{006theorem7.4} cannot occur.
\end{lemma}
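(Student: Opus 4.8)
The plan is to obtain a contradiction directly from the compactness machinery already assembled, so the argument is short. Suppose alternative (ii) of Theorem \ref{006theorem7.4} holds: there is a sequence $s_n \to \infty$ with $\sup_n N(s_n) < \infty$ for which $\{w(s_n)\}$ has no subsequence converging in $\mathcal{X}$. Let $(\eta_n, \zeta_n, \vartheta_n, \alpha_n)$ denote the solution of \eqref{Non-dimensionalization equations 1} associated with $(w(s_n), \alpha(s_n))$. First I would unpack the bound on $N$ term by term. Since
\[
N(s) = \|w(s)\|_{\mathcal{X}} + \frac{1}{\lambda(w(s), \alpha(s))} + \frac{1}{\alpha(s)} + \frac{1}{\alpha_{\mathrm{cr}} - \alpha(s)},
\]
boundedness of the first summand gives a uniform $C_{\mathrm{b}}^{3+\beta}(\overline{\mathcal{R}})$ bound on $w(s_n)$, hence on $(\eta_n, \zeta_n, \vartheta_n)$ after restoring the affine background in \eqref{w equations 1}; boundedness of the second summand forces $\lambda(w(s_n),\alpha(s_n)) \geq c > 0$, which by the definition \eqref{0253.2} of $\lambda$ translates into the second (non-degeneracy) condition in \eqref{0063.18}; and boundedness of the last two summands confines $\alpha_n$ to a compact subinterval of $(0, \alpha_{\mathrm{cr}})$, so that, after passing to a subsequence, $\alpha_n \to \alpha \in (0,\alpha_{\mathrm{cr}})$. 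Together these are exactly the hypotheses \eqref{0063.18} of Lemma \ref{006Lemma 3.5}.

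Next I would supply the monotonicity input. By Lemma \ref{006lemma7.5} the nodal property \eqref{0064.1} holds along the whole curve $\mathscr{C}$, so in particular $\partial_x \eta_n \leq 0$ for $x \geq 0$, which is \eqref{0063.19}. Hence Lemma \ref{006Lemma 3.5} applies and yields a subsequence, not relabelled, with $(\eta_n, \zeta_n, \vartheta_n) \to (\eta, \zeta, \vartheta)$ in $C_{\mathrm{b}}^{3+\beta}(\overline{\mathcal{R}})$, the limit again solving \eqref{Non-dimensionalization equations 1}.

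Finally I would promote this to convergence in $\mathcal{X}$. Put $w := (\eta - y,\ \zeta - (1-\gamma)y,\ \vartheta - y)$. The $C_{\mathrm{b}}^{3+\beta}$ convergence is convergence in the norm of $\mathcal{X}$; the constraints $\Delta w = 0$ in $\mathcal{R}$ and $w = 0$ on $\mathcal{B}$ are preserved under it; and since each $w(s_n) \in C_0^2(\overline{\mathcal{R}})$, which is a closed subspace of $C_{\mathrm{b}}^2(\overline{\mathcal{R}})$, the limit $w$ also lies in $C_0^2(\overline{\mathcal{R}})$ — its decay at infinity is in any case recorded in \eqref{0063.20}. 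Thus $w \in \mathcal{X}$ and $w(s_n) \to w$ in $\mathcal{X}$, contradicting the choice of the sequence. Therefore alternative (ii) cannot occur.

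The substantive difficulty has already been dealt with elsewhere: the exclusion of a bore in the $x \to \infty$ limit is handled inside the proof of Lemma \ref{006Lemma 3.5} via Theorem \ref{006theorem3.4}, and the Schauder bootstrap that upgrades local convergence to $C_{\mathrm{b}}^{3+\beta}$ convergence is built into that lemma as well. Within the present proof the only point requiring genuine care is the bookkeeping that turns the single scalar bound $\sup_n N(s_n) < \infty$ into the full list of uniform estimates in \eqref{0063.18} — in particular checking that the reciprocal terms in $N$ really do keep $\alpha_n$ bounded away from both $0$ and $\alpha_{\mathrm{cr}}$ and keep $\lambda$ bounded below — together with the routine closedness argument for $C_0^2$.
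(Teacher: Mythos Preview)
Your proposal is correct and follows the same approach as the paper, which simply states that the conclusion follows directly from Lemmas \ref{006lemma7.5} and \ref{006Lemma 3.5}; you have accurately unpacked how the uniform bound on $N(s_n)$ delivers the hypotheses \eqref{0063.18} and how the nodal property supplies \eqref{0063.19}, and your closedness argument for $C_0^2$ correctly upgrades the $C_{\mathrm{b}}^{3+\beta}$ convergence to convergence in $\mathcal{X}$.
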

\begin{proof}
The conclusion follows directly from Lemmas \ref{006lemma7.5} and \ref{006Lemma 3.5}.
\end{proof}
This leads to a contradiction. Therefore, alternative (i) must hold. To support the analysis of the terms in Theorem \ref{006theorem7.4} (i) presented in subsection \ref{025subsection5.5}, we begin by introducing two auxiliary propositions. The first proposition establishes that the $C^{3+\beta}$-norms of the functions $\eta$, $\zeta$ and $\vartheta$ are uniformly bounded in terms of a positive constant $\delta$, which appears in the given inequalities
\begin{align}
\delta \leq |\nabla \eta| \leq \frac{1}{\delta} \quad \text{and} \quad 1 +\epsilon_1- 2\alpha(\eta - 1) \geq \delta \quad \text{in } \mathcal{R}.\label{0066.1}
\end{align}
\begin{proposition}[Uniform regularity]\label{006proposition6.1}
Assume that \((\eta, \zeta, \vartheta, \alpha)\) is a solution to \eqref{Non-dimensionalization equations 1} with \(0 \leq \alpha \leq \alpha_{\mathrm{cr}}\), and that \eqref{0066.1} holds for some \(\delta > 0\). Then, there exists a constant \(C = C(\delta) > 0\) such that $\|\eta\|_{C^{3+\beta}(\mathcal{R})} < C, \|\zeta\|_{C^{3+\beta}(\mathcal{R})} < C$ and $\|\vartheta\|_{C^{3+\beta}(\mathcal{R})}<C$.
\end{proposition}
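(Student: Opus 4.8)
The plan is to combine elementary $C^{0}$ bounds with elliptic Schauder estimates, using \eqref{0066.1} to make the free-surface boundary condition uniformly non-degenerate.

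First, $\vartheta$ is disposed of immediately: it is harmonic in $\mathcal{R}$, equals $1$ on $\Gamma$, equals $0$ on $\mathcal{B}$, and is bounded (since $w_{3}=\vartheta-y\in C_{0}^{2}(\overline{\mathcal{R}})$ by \eqref{w equation 2}), so by uniqueness of bounded harmonic functions on the strip $\vartheta(x,y)=y$; hence $w_{3}\equiv 0$ and $\|\vartheta\|_{C^{3+\beta}(\mathcal{R})}$ is a universal constant. For $\eta$ and $\zeta$ we run a bootstrap. Since $\eta=0$ on $\mathcal{B}$ and $|\nabla\eta|\le 1/\delta$, integrating $\partial_{y}\eta$ vertically gives $|\eta|\le 1/\delta$ in $\mathcal{R}$; then $\zeta=1-\tfrac12\gamma-\tfrac12\gamma\eta^{2}$ on $\Gamma$ is bounded by $C(\delta)$, and since $\zeta$ is a bounded harmonic function vanishing on $\mathcal{B}$, the maximum principle yields $\|\zeta\|_{C^{0}(\mathcal{R})}\le C(\delta)$. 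Near the bottom the higher estimates are routine: $\eta,\zeta$ are bounded harmonic functions with homogeneous Dirichlet data on $\mathcal{B}$, so odd reflection across $\mathcal{B}$ together with interior estimates for harmonic functions yields uniform $C^{3+\beta}$ bounds on $\eta,\zeta$ in $\overline{\mathcal{R}}\cap\{y\le 3/4\}$ depending only on $\delta$.

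It remains to estimate $\eta,\zeta$ in a neighbourhood of $\Gamma$, where we view $(\eta,\zeta)$ as a solution of the elliptic system $\Delta\eta=\Delta\zeta=0$ supplemented on $\Gamma$ by $\mathscr{F}_{1}(w,\alpha)=0$ (a zeroth-order condition tying $\zeta$ to $\eta$) and $\mathscr{F}_{2}(w,\alpha)=0$ (a first-order oblique condition). By the computation in the proof of Lemma \ref{006lemma5.3}, the determinant governing the complementing condition for the linearised boundary operator equals $\inf_{\Gamma}4(1+\epsilon_1-2\alpha(\eta-1))^{2}|\nabla\eta|^{2}$, which is $\ge 4\delta^{4}$ by \eqref{0066.1}; thus the Lopatinski--Shapiro condition holds with constants depending only on $\delta$. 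The coefficients listed in Section \ref{025section3} are universal polynomials in $\eta,\nabla\eta,\zeta_{y},\vartheta_{y}$ and $\alpha\in[0,\alpha_{\mathrm{cr}}]$, and $\zeta_{y}|_{\Gamma}$ is the image of $(1-\tfrac12\gamma-\tfrac12\gamma\eta^{2})|_{\Gamma}$ under the bounded, order-one Dirichlet--Neumann map of the strip (with zero data on $\mathcal{B}$); hence a $C^{1+\beta}(\Gamma)$ bound on $\eta$ produces $C^{\beta}$ bounds on all these coefficients. Standard a priori Schauder estimates for the resulting nonlinear oblique-derivative system then give $\|(\eta,\zeta)\|_{C^{k+1+\beta}}\le C(\delta,\|(\eta,\zeta)\|_{C^{k+\beta}})$ near $\Gamma$ for $k=1,2$, and bootstrapping from $C^{1+\beta}$ up to $C^{3+\beta}$, together with the interior and bottom bounds, gives the claim. (Equivalently one may first reduce the system to a single nonlocal Babenko-type equation for $q:=\eta|_{\Gamma}$, which \eqref{0066.1} renders uniformly elliptic, and apply pseudodifferential Schauder estimates.)

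The main obstacle is the \emph{initial} step of this bootstrap: upgrading the Lipschitz bound $|\nabla\eta|\le 1/\delta$ to a uniform $C^{1+\beta}$ bound on $\eta$ up to $\Gamma$. Hypothesis \eqref{0066.1} controls $|\nabla\eta|$ but not $\nabla\zeta$, and the Bernoulli relation $\mathscr{F}_{2}=0$ couples the normal derivatives $\eta_{y}$ and $\zeta_{y}$ on $\Gamma$. Here the uniform obliqueness derived above is essential, since it permits the De Giorgi--Nash--Moser/Lieberman regularity theory for nonlinear oblique-derivative problems to improve $C^{0,1}$ to $C^{1+\beta}$ with a constant depending only on $\delta$ and the fixed data $\gamma,\epsilon_1,\beta,\alpha_{\mathrm{cr}}$; once this is in place the remaining iterations are the standard Schauder bootstrap. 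Should one prefer to avoid the sharp nonlinear regularity theory, an alternative is a translation/compactness argument: were the asserted bound to fail along a sequence of solutions all satisfying \eqref{0066.1} for the same $\delta$, then recentring at points of large norm and passing to a limit would produce a nonconstant bounded solution of a constant-coefficient complementing boundary-value problem on a half-plane or strip, contradicting the Liouville statement guaranteed by the uniform non-degeneracy.
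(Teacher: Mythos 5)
Your proof follows essentially the same route as the paper: decompose \eqref{Non-dimensionalization equations 1} into a Poisson problem \eqref{0066.3} for $\Psi=\zeta+\tfrac12\gamma\eta^2$, a Dirichlet problem \eqref{0066.33} for $\theta=\vartheta$, and the Bernoulli-type boundary value problem \eqref{0066.4} for $\eta$, obtain $C^0$ bounds from the Lipschitz control in \eqref{0066.1}, and bootstrap via Schauder estimates whose applicability is secured by the uniform Lopatinski--Shapiro bound $4(1+\epsilon_1-2\alpha(\eta-1))^2|\nabla\eta|^2\ge 4\delta^4$ (your translation of the determinant in Lemma~\ref{006lemma5.3} is correct). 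The paper explicitly defers the technical core to Proposition~6.1 of \cite{SVHMHW2023}, and the step it defers is precisely the one you single out as the obstacle: upgrading $|\nabla\eta|\in L^\infty$ to $\nabla\eta\in C^\beta$ up to $\Gamma$. Your observation that $\vartheta\equiv y$ --- hence $w_3\equiv 0$ and $\theta_y\equiv 1$, forced by uniqueness of bounded harmonic functions on the strip with constant Dirichlet data together with the decay from \eqref{w equation 2} --- is a genuine simplification the paper does not state; it shows the electric potential contributes only the constant $\epsilon_1$ to the Bernoulli relation in this estimate, and it incidentally shows that the strict inequality $\theta_y<1$ asserted in Proposition~\ref{006proposition6.5} should read $\theta_y\le 1$ (with $\tilde{\theta}\equiv 0$ the Hopf lemma gives no strict sign), which is harmless downstream but worth flagging.

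Two cautions on your closing step. First, if one reduces to the scalar problem \eqref{0066.4} and solves the Bernoulli relation for $\eta_y$, the resulting oblique-derivative condition degenerates at points where $\eta_y=0$; hypothesis \eqref{0066.1} controls $|\nabla\eta|$ but not $\eta_y$ separately, and $\eta_y=0$ does occur for the overhanging profiles the paper is built to capture. The uniform non-degeneracy must therefore be read from the \emph{system} Lopatinski--Shapiro determinant (as you in fact compute), not from scalar obliqueness, and you should say so explicitly before invoking scalar oblique-derivative regularity theory, or else argue via the holomorphic function $\eta_y+i\eta_x$ so that neither component of $\nabla\eta$ is privileged. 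Second, the translation/compactness alternative you sketch does not close the argument as written: without an a priori estimate you cannot extract a convergent subsequence, a rescaling adapted to a putative blow-up of $\|\eta\|_{C^{3+\beta}}$ would not preserve the strip geometry or the bottom boundary condition, and the ``Liouville statement'' you invoke is not an off-the-shelf result. That route would need substantial development, so the Lieberman-type oblique-derivative regularity (effectively what \cite{SVHMHW2023} supplies) is the step that actually has to be carried out or cited in detail.
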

The proof proceeds in a manner similar to Proposition 6.1 in \cite{SVHMHW2023}, with the principal difference being that the system \eqref{Non-dimensionalization equations 1} is decomposed into two coupled scalar equations, as described in detail below.\par
Recall from \eqref{New variable representation},
\begin{align}
\Psi(x, y) =\zeta(x, y) + \frac{1}{2} w \eta^2(x, y),\quad \theta(x, y)=\vartheta(x, y).\label{0066.2}
\end{align}
Fixing $\eta$, we consider the equations governing $\Psi$, derived from \eqref{Non-dimensionalization equation b}, \eqref{Non-dimensionalization equation d}, and \eqref{Non-dimensionalization equation h}. This leads to the following boundary value problem
\begin{equation}
\begin{split}
\Delta \Psi = \gamma |\nabla \eta|^2&\quad\text{in } \mathcal{R}, \\
\Psi = 1-\frac{1}{2}\gamma&\quad\text{on } \Gamma, \\
\Psi = 0 &\quad\text{on } \mathcal{B}.
\end{split}\label{0066.3}
\end{equation}
Finally, with $\Psi$ and $\theta$ fixed, the equation for $\eta$ follows from \eqref{Non-dimensionalization equation a}, \eqref{Non-dimensionalization equation e}, and \eqref{Non-dimensionalization equation g}, yielding:
\begin{subequations}\label{0066.4}
\begin{align}
\Delta \eta = 0&\quad\text{in } \mathcal{R},\label{0066.4a} \\
(1+\epsilon_1 - 2\alpha (\eta - 1)) |\nabla \eta|^2 = \Psi_y^2+\epsilon_1&\quad\text{on } \Gamma, \label{0066.4b}\\
\eta = 0&\quad\text{on } \mathcal{B}.\label{0066.4c}
\end{align}
\end{subequations}
\begin{proposition}\label{006proposition6.5}
Assume that \(\Psi\) as defined in \eqref{0066.2}, solve the boundary value problem \eqref{0066.3}, and that \((\eta, \zeta, \vartheta, \alpha)\) is a solution to \eqref{Non-dimensionalization equations 1} with \(0 \leq \alpha \leq \alpha_{\mathrm{cr}}\). Then the following bounds hold: \\
(i) if $\gamma\leq 0$ then $\Psi _{y}< 1- \frac{1}{2}\gamma$ on $\Gamma$,  \\
(ii) if $\gamma \geq 0$ then $\Psi _{y}>\min\left\{2-\gamma+2\epsilon_1, \gamma\inf_{\mathcal{R}}|\nabla\eta|^{2}\right\}$ on $\Gamma$.
\end{proposition}
\begin{proof}
(i) Suppose \(\gamma \leq 0\). Define
$$\tilde{\Psi}=\Psi-\bigl(1-\frac{1}{2}\gamma)y.$$
Then \(\tilde{\Psi}\) solves
\begin{eqnarray}
\left\{\begin{array}{llll}{\Delta\tilde{\Psi}=\gamma |\nabla\eta|^2}&{\quad\mathrm{in~}\mathcal{R}},\notag\\
{\tilde{\Psi}=0}&{\quad\mathrm{on~}\Gamma},\notag\\
{\tilde{\Psi}=0}&{\quad\mathrm{on~}\mathcal{B}}.\end{array}\right. \notag
\end{eqnarray}
By the strong minimum principle, $\Psi$ must attain its minimum on the boundary. In particular, by applying the Hopf boundary point lemma at any boundary point, we obtain that
$$\tilde{\Psi}_y=\Psi_y-1+\frac{1}{2}\gamma<0.$$
This implies (i).\par
(ii) Now suppose \(\gamma \geq 0\). Define
$$\bar{\Psi}=\Psi-My^2,$$
where
$$M=\min\left\{1-\frac{1}{2}\gamma+\epsilon_1, \frac{1}{2}\gamma\inf_{\mathcal{R}}|\nabla\eta|^{2}\right\}.$$
Since \(0 \leq \alpha \leq \alpha_{\mathrm{cr}}\), we have \(1 - \gamma + \epsilon_1 > 0\), which ensures that \(M > 0\). Then \(\bar{\Psi}\) satisfies
\begin{eqnarray}
\left\{\begin{array}{llll}{\Delta\bar{\Psi}=\gamma|\nabla \eta|^2-M}&{\quad\mathrm{in~}\mathcal{R}},\notag\\
{\bar{\Psi}=1-\frac{1}{2}\gamma-M}&{\quad\mathrm{on~}\Gamma},\notag\\
{\bar{\Psi}=0}&{\quad\mathrm{on~}\mathcal{B}}.
\end{array}\right. \notag
\end{eqnarray}
By the strong maximum principle, \(\bar{\Psi}\) attains its maximum at any boundary point. At such a point, applying the Hopf boundary point lemma, we get that
$$\bar{\Psi}_y=\Psi_y-2M>0,$$
which establishes (ii).
\end{proof}

\subsection{Bounds on the Froude number}\label{025subsection5.4}
This subsection is devoted to the proof of Theorem \ref{006theorem3.8}, which establishes a lower bound for the Froude number. It also facilitates the analysis of the terms in Theorem \ref{006theorem7.4} (i). In contrast to the approach taken in \cite{MHW20152}, the presence of internal stagnation points and the possibility of overhanging profiles prevent a direct application of their method. Therefore, we adopt the strategy developed in \cite{SVHMHW2023}.\par
We begin by introducing a modified form of the fluid force flux function
\begin{equation}
\begin{split}
\Phi(x, y) := \int_0^y \bigg( \frac{\eta_y(\zeta_y^2 - \zeta_x^2) + 2\eta_x \zeta_x \zeta_y}{\eta_x^2 + \eta_y^2}+\epsilon_1  \frac{\eta_{y}}{\eta_{x}^{2}+\eta_{y}^{2}}
+ (1 - \gamma^2)\eta_y + 2(\gamma - 1-\frac{\epsilon_1}{2}) \bigg) dy.
\end{split}\label{0063.23}
\end{equation}
As $|x| \to \infty$, it follows from \eqref{w equation 2} that $\Phi \to 0$. Observe that the first two terms in \eqref{0063.23} coincide with the integrand of the flow force expression given in \eqref{0063.4}, with the distinction that the upper limit of integration here is the variable $y$, rather than the fixed value $y = 1$.\par
To proceed, we analyze the asymptotic behavior of \eqref{0063.4} as $|x| \to \infty$:
\begin{equation}\notag
\begin{split}
S(x ; \eta, \zeta, \alpha)\big|_{|x| \to \infty}
= & \frac{1}{2} \int_0^1 \big((1 - \gamma)^2+\epsilon_1\big) \, dy - \left. \left( \frac{\gamma^2}{6} \eta^3 + \frac{\alpha}{2} \eta^2 - \frac{2\alpha + 1+\epsilon_1}{2} \eta \right) \right|_{y=1} \\
= & \frac{\gamma^2}{3} - \gamma + \frac{\alpha}{2} + 1+\epsilon_1.
\end{split}
\end{equation}
Since $S$ does not depend on $x$, it follows that
\begin{equation}\notag
\begin{split}
0 = & \frac{1}{2} \int_0^1 \frac{\eta_y (\zeta_y^2 - \zeta_x^2) + 2 \eta_x \zeta_x \zeta_y}{\eta_x^2 + \eta_y^2} \, dy +\frac{\epsilon_1}{2} \int_{0}^{1} \bigg( \frac{\eta_{y}}{\eta_{x}^{2}+\eta_{y}^{2}} \bigg)\, d y \\
& - \left. \left( \frac{\gamma^2}{6} \eta^3 + \frac{\alpha}{2} \eta^2 - \frac{2\alpha + 1+\epsilon_1}{2} \eta \right) \right|_{y=1} - \left( \frac{\gamma^2}{3} - \gamma + \frac{\alpha}{2} + 1 +\epsilon_1\right),
\end{split}
\end{equation}
which simplifies to
\begin{equation}
\begin{split}
&\frac{1}{2} \int_0^1 \frac{\eta_y (\zeta_y^2 - \zeta_x^2) + 2 \eta_x \zeta_x \zeta_y}{\eta_x^2 + \eta_y^2} \, dy +\frac{\epsilon_1}{2} \int_{0}^{1} \bigg( \frac{\eta_{y}}{\eta_{x}^{2}+\eta_{y}^{2}} \bigg)\, d y
\\&= \frac{\gamma^2}{3} - \gamma + \frac{\alpha}{2} + 1+\epsilon_1 + \frac{\gamma^2}{6} \eta^3 + \frac{\alpha}{2} \eta^2 - \frac{2\alpha + 1+\epsilon_1}{2} \eta.
\end{split} \label{006flow+}
\end{equation}
Taking \eqref{006flow+} into \eqref{0063.23}, we have
\begin{equation}\label{0063.24b}
\Phi = \frac{\gamma^2}{3} (\eta - 1)^3 + (\alpha + \gamma^2)(\eta - 1)^2-\epsilon_1 \eta+\epsilon_1\quad \text{on } \Gamma.
\end{equation}
It follows directly from the definition of $\Phi$ in \eqref{0063.23} that
\begin{align}
\Phi = 0 \quad \text{on } \mathcal{B}. \notag
\end{align}
Note that the first two terms in \eqref{0063.23} represent the real part of the holomorphic functions given in \eqref{0063.0004}. Therefore, we conclude that
\begin{align}
\Delta \Phi = 0 \quad \text{in } \mathcal{R}. \label{0063.24a}
\end{align}\par
Multiplying \eqref{0063.24a} by $y$ and integrating by parts twice, for any $M > 0$, we have
\[
\begin{aligned}
0 = -\int_{-M}^{M} \int_{0}^{1} \Delta \Phi \cdot y \, dy \, dx = \int_{-M}^{M} \Phi \, dx \bigg|_{y=0}^{y=1} - \int_{-M}^{M} \Phi_y \cdot y \, dx \bigg|_{y=0}^{y=1} - \int_{0}^{1} \Phi_x \cdot y \, dy \bigg|_{x=-M}^{x=M},
\end{aligned}
\]
which leads
\begin{align}
\left. \int_{-M}^{M} (\Phi - \Phi_y\cdot y) \, dx \right|_{y=0}^{y=1} = \int_{0}^{1} \Phi_x \cdot y \, dy \bigg|_{x=-M}^{x=M} = o(1) \quad \text{as} \, M \to \infty.\label{0063.26}
\end{align}
By \eqref{w equations 1}, the final equality holds since $\Phi_x \to 0$ as $x \to \infty$. Next, we observe that the second term in the integral on the left-hand side of \eqref{0063.26}, $\Phi_y \cdot y$, vanishes along the boundary $\mathcal{B}$. Using \eqref{Non-dimensionalization equation d}, \eqref{Non-dimensionalization equation e}, and \eqref{Non-dimensionalization equation f add}, we obtain
\begin{equation}
\begin{split}
\frac{\eta_y(\zeta_y^2-\zeta_x^2)+2\eta_x\zeta_x\zeta_y}{\eta_x^2+\eta_y^2}
+\epsilon_1\frac{\eta_{y}}{\eta_{x}^{2}+\eta_{y}^{2}}=
\eta_y\big(1+\epsilon_1-2\alpha(\eta-1)\big)+\eta_y\gamma^2\eta^2.
\end{split}\label{0063.26+++}
\end{equation}
Then, by \eqref{0063.23} and \eqref{0063.26+++}, we get
\begin{equation}
\begin{split}
\Phi_y &= \frac{\eta_y(\zeta_y^2-\zeta_x^2)+2\eta_x\zeta_x\zeta_y}{\eta_x^2+\eta_y^2}+\epsilon_1\frac{\eta_{y}}{\eta_{x}^{2}+\eta_{y}^{2}}
+(1-\gamma^2)\eta_y+2(\gamma-1-\frac{\epsilon_1}{2}) \\
&=-2\gamma\eta\zeta_y+(2+\epsilon_1+2\alpha-\gamma^2)\eta_y-2\alpha\eta\eta_y-\gamma^2\eta^2\eta_y+2(\gamma-1-\frac{\epsilon_1}{2})
\end{split}\quad\text{ on }\Gamma. \label{0063.27}
\end{equation}
Since both $\eta_x$ and $\zeta_x$ tend to zero as $x \to \infty$, the Gauss-Green theorem, combined with the kinematic boundary condition \eqref{Non-dimensionalization equation d}, implies that
\begin{equation}
\begin{split}
\int_{-M}^{M} \eta \zeta_y \, dx &= \int_{-M}^{M} \eta_y \zeta \, dx + o(1) \\&= \int_{-M}^{M} \eta_y \big(1-\frac{1}{2}\gamma-\frac{1}{2} \gamma\eta^{2} \big)\, dx + o(1)
\end{split}\text{ as } M \to \infty. \label{0063.28}
\end{equation}
Combining \eqref{0063.27} and \eqref{0063.28}, and rewriting the variables in terms of $w_1 = \eta - y$, we obtain
\begin{align}
&\int_{-M}^{M} \Phi_y \cdot y \, dx \bigg|_{y=0}^{y=1}\notag\\&= \int_{-M}^{M} 2 \big( (1 - \gamma+\frac{\epsilon_1}{2}+\alpha)w_{1y}-\alpha w_1w_{1y}-\alpha w_1-\alpha w_{1y}\big) \, dx \bigg|_{y=1} + o(1) \quad \text{as} \, M \to \infty.\notag
\end{align}
Applying the Gauss-Green theorem to \(w_1\) and \(y\), we obtain
\[
\int_{-M}^{M} w_{1y} \, dx = \int_{-M}^{M} w_1 \, dx + o(1) \quad \text{as} \, M \to \infty.
\]
Taking \eqref{0063.24b} as \(M \to \infty\), we have
\begin{align}
&\int_{-M}^{M} (\Phi - \Phi_y \cdot y) \, dx \bigg|_{y=0}^{y=1}\notag \\&=2\int_{-M}^{M} \bigg((\gamma - 1-\epsilon_1+ \alpha )w_1 +\frac{\gamma^2}{6} w_1^3+\frac{\alpha+\gamma^2}{2}w_1^2+\alpha w_1w_{1y}\bigg)\, dx\notag\\&\quad+ o(1).\notag
\end{align}
By combining \eqref{0063.26}, we obtain
\begin{equation}
\begin{split}
\left(1 - \gamma+ \epsilon_1 - \alpha \right) \int_{-M}^{M} w_1 \, dx &=
\alpha \int_{-M}^{M} w_1 w_{1y} \, dx + \frac{\alpha + \gamma^2}{2} \int_{-M}^{M} w_1^2 \, dx \\
&\quad + \frac{\gamma^2}{6} \int_{-M}^{M} w_1^3 \, dx + o(1)
\end{split} \label{0063.25}
\end{equation}
as \(M \to \infty\).
\begin{proof}[\textbf{Proof of Theorem~\ref{006theorem3.8}}]
Assume that \( w_1 \not\equiv 0 \). Then \( w_1 \) must be strictly positive at some point on \( \Gamma \). Our goal is to determine the sign of the term involving \( w_1 w_{1y} \).\par
Observe that
\begin{align}
0 &< \int_{-M}^{M} \int_{0}^{1} |\nabla w_1|^2 \, dy \, dx = \int_{-M}^{M} \int_{0}^{1} \nabla \cdot (w_1 \nabla w_1) \, dy \, dx \notag \\
&= \int_{-M}^{M} w_1 w_{1y} \, dx \bigg|_{y=1} + \int_{0}^{1} w_1 w_{1x} \, dy \bigg|_{x=-M}^{x=M}, \notag
\end{align}
where we have applied the divergence theorem to \(w_1 \nabla w_1\). Since \( w_1 w_{1x} \to 0 \) as \( |x| \to \infty \), for sufficiently large \( M \) the second term vanishes, and we conclude that $\int_{-M}^{M} w_1 w_{1y} \, dx > 0$.\par
All terms on the right-hand side of the integral identity \eqref{0063.25} are positive. Hence, for sufficiently large \( M \), the left-hand side must also be positive. This yields
\[
1 - \gamma + \epsilon_1 - \alpha > 0.
\]
We complete the proof.
\end{proof}
We now exclude the fourth term in alternative (i) of Theorem \ref{006theorem7.4}.
\begin{lemma}\label{006lemma7.7}
If both \( \|w(s)\|_\chi \) and \( 1/\lambda(w(s), \alpha(s)) \) remain uniformly bounded along the curve \( \mathscr{C} \), then
\[
\limsup_{s \to \infty} \alpha(s) < \alpha_{\mathrm{cr}}.
\]
\end{lemma}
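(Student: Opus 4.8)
The plan is to argue by contradiction, combining the compactness result of Lemma~\ref{006Lemma 3.5} with the Froude lower bound of Theorem~\ref{006theorem3.8}. Suppose the conclusion fails. Since every point of $\mathscr{C}$ lies in $\mathcal{U}$, we have $\alpha(s)<\alpha_{\mathrm{cr}}$ for all $s$, so necessarily $\limsup_{s\to\infty}\alpha(s)=\alpha_{\mathrm{cr}}$, and we may choose $s_n\to\infty$ with $\alpha_n:=\alpha(s_n)\nearrow\alpha_{\mathrm{cr}}$. First I would verify that the standing bounds feed the hypotheses of Lemma~\ref{006Lemma 3.5}: the bound on $\|w(s_n)\|_{\mathcal{X}}$ yields a uniform $C^{3+\beta}(\mathcal{R})$ bound on $(\eta_n,\zeta_n,\vartheta_n)$ (recall $\mathcal{R}$ is a strip, so the affine parts $y$, $(1-\gamma)y$, $y$ contribute only bounded terms), the bound on $1/\lambda(w(s_n),\alpha_n)$ yields the uniform non-degeneracy in \eqref{0063.18}, and Lemma~\ref{006lemma7.5} supplies the nodal property \eqref{0064.1}, hence the monotonicity \eqref{0063.19}, along $\mathscr{C}$. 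Lemma~\ref{006Lemma 3.5} then gives a subsequence with $(\eta_n,\zeta_n,\vartheta_n)\to(\eta_*,\zeta_*,\vartheta_*)$ in $C^{3+\beta}_{\mathrm{b}}(\overline{\mathcal{R}})$, and passing to the limit in \eqref{Non-dimensionalization equations 1} shows $(\eta_*,\zeta_*,\vartheta_*,\alpha_{\mathrm{cr}})$ is again a solution with $\lambda(w_*,\alpha_{\mathrm{cr}})>0$, where $w_*:=(\eta_*-y,\zeta_*-(1-\gamma)y,\vartheta_*-y)$.

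The next step is a dichotomy on whether $w_*\equiv0$. If $w_*\not\equiv0$, I would first note that $w_{*,1}\not\equiv0$ on $\Gamma$: since $w_{*,1}$ is harmonic and vanishes on $\mathcal{B}$, were it to vanish on $\Gamma$ the maximum principle would give $w_{*,1}\equiv0$, and then \eqref{Non-dimensionalization equation d} and \eqref{Non-dimensionalization equation f} together with uniqueness of bounded harmonic functions would force $w_*\equiv0$. Moreover the limit is still a monotone wave of elevation: from $\eta_{n,x}<0$ on $\Gamma^+$ and $\eta_n(x,1)\to1$ as $x\to\infty$ one gets $\eta_n(\cdot,1)\geq1$, hence $\eta_*\geq1$ and $\eta_*\not\equiv1$ on $\Gamma$. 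Since $\alpha_{\mathrm{cr}}=1-\gamma+\epsilon_1$ is not strictly less than $1-\gamma+\epsilon_1$, Theorem~\ref{006theorem3.8} applied to $(\eta_*,\zeta_*,\vartheta_*,\alpha_{\mathrm{cr}})$ forces triviality, i.e. $w_*\equiv0$, contradicting $w_*\not\equiv0$.

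This leaves the degenerate case $w_*\equiv0$, which I expect to be the real obstacle. Here $\|w(s_n)\|_{\mathcal{X}}\to0$ and $\varepsilon_n:=\alpha_{\mathrm{cr}}-\alpha_n\to0^+$, so $(w(s_n),\alpha(s_n))$ approaches the bifurcation point $(0,\alpha_{\mathrm{cr}})$. Here I would invoke the local uniqueness of Theorem~\ref{006theorem7.1}(ii): each $w(s_n)$ satisfies the nodal property \eqref{0064.1} by Lemma~\ref{006lemma7.5}, so once $\|w(s_n)\|_{\mathcal{X}}$ and $\varepsilon_n$ are small enough we must have $w(s_n)=w^{\varepsilon_n}$, i.e. $(w(s_n),\alpha(s_n))\in\mathscr{C}_{\mathrm{loc}}$ for all large $n$. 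This contradicts part (c) of Theorem~\ref{006theorem7.4}. With both cases excluded, $\limsup_{s\to\infty}\alpha(s)<\alpha_{\mathrm{cr}}$.

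Thus the routine content is the transcription of the standing bounds into the hypotheses of the compactness lemma and the fact that the nodal and elevation properties survive in the $C^3$-limit; the one genuinely delicate point is ruling out that the solution curve limits back onto the trivial branch as $\alpha\to\alpha_{\mathrm{cr}}$, which is exactly why the local uniqueness near the bifurcation point (Theorem~\ref{006theorem7.1}(ii)) together with the structural alternative (c) of the global continuation theorem are needed.
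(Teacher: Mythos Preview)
Your proposal is correct and follows essentially the same route as the paper's proof: contradiction via compactness (Lemma~\ref{006Lemma 3.5}), then Theorem~\ref{006theorem3.8} forces the limit at $\alpha_{\mathrm{cr}}$ to be trivial, and finally the local uniqueness of Theorem~\ref{006theorem7.1}(ii) combined with the nodal property from Lemma~\ref{006lemma7.5} places the sequence back in $\mathscr{C}_{\mathrm{loc}}$, contradicting Theorem~\ref{006theorem7.4}(c). Your write-up is more explicit about intermediate details (the elevation property $\eta_*\geq1$ on $\Gamma$, why $w_{*,1}\not\equiv0$ if $w_*\not\equiv0$), which the paper simply asserts, but the argument is the same.
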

\begin{proof}
Suppose, for contradiction, that there exists a sequence \( s_n \to \infty \) such that
\[
\sup_{n} \|w(s_n)\|_\chi < \infty, \quad \inf_{n} \lambda(w(s_n), \alpha(s_n)) > 0, \quad \text{and} \quad \alpha(s_n) \to \alpha_{\mathrm{cr}}.
\]
By Lemma \ref{006Lemma 3.5}, we extract a subsequence (still denoted \( s_n \)) such that
\[
(w(s_n), \alpha(s_n)) \to (w^*, \alpha^*) \in \mathcal{X} \times \mathbb{R},
\]
where \( (w^*, \alpha^*) \) is a solution to \( \mathscr{F}(w, \alpha) = 0 \) and \( \alpha^* = \alpha_{\mathrm{cr}} \). Since \( w_1 \geq 0 \) on the surface \( \Gamma \), Theorem \ref{006theorem3.8} implies that the limiting solution must be trivial, i.e., \( w= 0 \). Thus, we have \( \|w(s_n)\|_\chi \to 0 \). In addition, by Lemma \ref{006lemma7.5}, all \( w(s_n) \) satisfy the nodal property \eqref{0064.1}. Therefore, for sufficiently large \( n \), Theorem \ref{006theorem7.1} (ii) implies that \( (w(s_n), \alpha(s_n)) \in \mathscr{C}_{\mathrm{loc}} \). This contradicts the global alternative in Theorem \ref{006theorem7.4} (c), and the result follows.
\end{proof}
\subsection{Proof of Theorem \ref{006theorem7.9ABOVE}}\label{025subsection5.5}
Theorem \ref{006theorem7.9ABOVE} is now ready to be proved. As noted in Section \ref{025section2.4}, the primary focus lies on the case $\gamma < 0$, with the corresponding results for $\gamma \geq 0$ being stated only for completeness.
\begin{theorem} \label{006theorem7.9}
Fix the gravitational constant \(g > 0\), the asymptotic depth \(d > 0\), $\gamma$ and permittivity $\epsilon_1 > 0$. Then there exists a global continuous curve \(\mathscr{C}\) of solutions to \eqref{Non-dimensionalization equations 1}, parameterized by \(s \in (0, \infty)\). Moreover, the following asymptotic properties hold along \(\mathscr{C}\) as \(s \to \infty\):
	\begin{itemize}
		\item[(i)] For \(\gamma < 0\),
		\begin{align}
			\min\Biggl\{ & \inf_{\Gamma} \bigl(1+\epsilon_1 - \frac{2}{F^2} \frac{\eta - d}{d} \bigr),\;
			\inf_{\Gamma} |\nabla \eta(s)|,\;
			\inf_\Gamma \biggl(\bigl(1 + \epsilon_1 - \frac{2}{F^2} \frac{\eta - d}{d}\bigr) |\nabla \eta(s)|^2-\epsilon_1\biggr), \notag\\
			& \frac{1}{F(s)} \Biggr\}\longrightarrow 0,\notag
		\end{align}
		
		\item[(ii)] For \(\gamma > 0\),
		\begin{align}
			\min\Biggl\{
			& \inf_{\Gamma} |\nabla \eta(s)|,\; \big( \sup_{\Gamma} |\nabla \eta(s)| \big)^{-1},\;
			\inf_\Gamma \left(\big(1 + \epsilon_1 - \frac{2}{F^2} \frac{\eta - d}{d}\big) |\nabla \eta(s)|^2-\epsilon_1\right), \notag\\
			& \frac{1}{F(s)} \Biggr\} \longrightarrow 0,\notag
		\end{align}
		
		\item[(iii)] For \(\gamma = 0\),
		\begin{align}
			\min\Biggl\{
			& \inf_{\Gamma} \big(1+\epsilon_1 - \frac{2}{F^2} \frac{\eta - d}{d} \big),\;
			\inf_\Gamma \left(\big(1 + \epsilon_1 - \frac{2}{F^2} \frac{\eta - d}{d}\big) |\nabla \eta(s)|^2-\epsilon_1\right), \notag\\
			& \frac{1}{F(s)} \Biggr\} \longrightarrow 0,\notag
		\end{align}
	\end{itemize}
	where \(F\) denotes the Froude number. All solutions on \(\mathscr{C}\) are symmetric and monotone waves of elevation. Specifically, \(\eta\) is even in \(x\), and \(\eta_x(x, d) < 0\) for \(x > 0\).
\end{theorem}
Most of the essential elements have already been established in the preceding analysis, it remains to synthesize these findings to complete the argument.
\begin{proof}[\textbf{Proof of Theorem \ref{006theorem7.9}}]
Let \(\mathscr{C}\) denote the global curve established in Theorem \ref{006theorem7.4}. By Lemma \ref{006lemma7.7}, we conclude that alternative (i) in Theorem \ref{006theorem7.4} must occur:
\begin{align}
\|w(s)\|_\chi +\frac{1}{\kappa(w(s), \alpha(s))-\epsilon_1}+ \frac{1}{\lambda(w(s), \alpha(s))}+ \frac{1}{\alpha(s)} \longrightarrow \infty \quad \text{as } s \to \infty. \label{0067.6}
\end{align}
From the definition of \(\lambda(w, \alpha)\) in \eqref{0253.2}, and by applying the maximum principle and the maximum modulus principle to its two factors, we deduce
\[
\frac{1}{\lambda(w(s), \alpha(s))} \leq \frac{1}{\inf_\Gamma (1 + \epsilon_1 - 2\alpha(s) w_1(s))} + \frac{1}{\inf_\Gamma (w_{1x}^2(s) + (1 + w_{1y}(s))^2)}.
\]
According to Proposition \ref{006proposition6.1}, the norm \( \|w(s)\|_\chi \) is bounded in terms of the parameter \(\delta > 0\) from \eqref{0066.1}. Using the maximum principle again, we estimate
\[
\|w(s)\|_\chi \leq \frac{1}{\inf_\Gamma (1 + \epsilon_1 - 2\alpha(s) w_1(s))} + \sup_\Gamma |\nabla w_1(s)| + \frac{1}{\inf_\Gamma (w_{1x}^2(s) + (1 + w_{1y}(s))^2)}.
\]
Substituting into \eqref{0067.6}, we obtain
\begin{align}
&\frac{1}{\inf_\Gamma (1 + \epsilon_1 - 2\alpha(s) w_1(s))} + \sup_\Gamma |\nabla w_1(s)| + \frac{1}{\inf_\Gamma (w_{1x}^2(s) + (1 + w_{1y}(s))^2)} \notag \\
&\quad +\frac{1}{\inf_{\Gamma} (1 + \epsilon_1 - 2\alpha(s) w_1(s)) |\nabla w_1(s)|^2-\epsilon_1}+ \frac{1}{\alpha(s)} \longrightarrow \infty \quad \text{as } s \to \infty. \label{0067.7}
\end{align}
To simplify \eqref{0067.7}, we interpret its components using the dimensional variables (denoted with a superscript *). Recall that \(\alpha = \frac{1}{F^2}\) and
\[
1 + \epsilon_1 - 2\alpha \frac{\eta^* - d}{d} = 1 + \epsilon_1 - 2\alpha (\eta - 1) = 1 + \epsilon_1 - 2\alpha w_1,
\]
\[
|\nabla \eta^*(x^*, y^*)|^2 = |\nabla \eta(x, y)|^2 = w_{1x}^2(x, y) + (1 + w_{1y}(x, y))^2.
\]\par
(i) Case \(\gamma < 0\): from Proposition \ref{006proposition6.5} (i), we know
\begin{align}
\Psi_y < 1 - \frac{1}{2} \gamma, \quad \theta_y < 1\quad \text{on } \Gamma. \label{0067.8}
\end{align}
We claim that \( \Psi_y^2> 0 \) on \(\Gamma\). This will be verified shortly. Substituting into \eqref{0066.4b} yields
\[
(1 +\epsilon_1- 2\alpha w_1)(w_{1x}^2 + (1 + w_{1y})^2) = \Psi_y^2  +\epsilon_1\leq (1 - \frac{1}{2}\gamma)^2+\epsilon_1.
\]
This shows that the second term in \eqref{0067.7} is controlled by a multiple of the third. Hence,
\begin{align}
&\min\left\{ \inf_\Gamma\big(1 + \epsilon_1 - \frac{2}{F^2} \frac{\eta - d}{d} \big), \inf_\Gamma |\nabla \eta(s)|, \inf_\Gamma \left(\big(1 + \epsilon_1 - \frac{2}{F^2} \frac{\eta - d}{d}\big) |\nabla \eta(s)|^2-\epsilon_1\right),
\frac{1}{F(s)} \right\}\notag\\ &\longrightarrow 0.\notag
\end{align}
To prove the claim, note that from \eqref{0066.4b} and the definition \eqref{0253.2}, we have
$$\Psi_y^2+\epsilon_1 \geq \kappa(w, \alpha)>\epsilon_1 \text{ on } \Gamma,$$
which proves the assertion and completes the proof of Theorem~\ref{006theorem7.9} (i).\par
(ii) Case \(\gamma \geq 0\): combining \eqref{0066.4b} with Proposition \ref{006proposition6.5} (ii), we obtain
\[
1 + \epsilon_1 - 2\alpha w_1 \geq \frac{\left(\min\left\{2 - \gamma + 2\epsilon_1,\ \gamma \inf_{\mathcal{R}} (w_{1x}^2 + (1 + w_{1y})^2)\right\}\right)^2}{w_{1x}^2 + (1 + w_{1y})^2}.
\]
This shows that the first term in \eqref{0067.7} is bounded above by a constant multiple of the second and third terms, which completes the proof of Theorem \ref{006theorem7.9} (ii).\par
(iii) Case \(\gamma = 0\): Proposition \ref{006proposition6.5} (i) ensures
\begin{align}
\Psi_y < 1 \quad \text{on } \Gamma. \label{0067.8+1}
\end{align}
We claim again that \(\Psi_y^2 > 0\). This will be shown shortly. Substituting into \eqref{0066.4b} yields
\[
(1 + \epsilon_1 - 2\alpha w_1)(w_{1x}^2 + (1 + w_{1y})^2) = \Psi_y^2 + \epsilon_1 \leq 1 + \epsilon_1.
\]
Thus, the second term in \eqref{0067.7} is again controlled by the third. To establish the claim, note that as before, $\Psi_y^2+\epsilon_1 \geq \kappa(w, \alpha)>\epsilon_1$ on $\Gamma$, which verifies the claim. Consequently, this ensures that the subsequent two terms in \eqref{0067.7} are dominated by the leading term. This completes the proof of Theorem \ref{006theorem7.9} (iii).
\end{proof}
\begin{rmk}	\label{025rmk2.2}
As $\gamma>0$, if asymptotic property $\left(\sup_{\Gamma}|\nabla\eta(s)|\right)^{-1}\rightarrow 0$ holds, which means that the conformal transformation of variables degenerates and the free surface expands until it loses smoothness.
\end{rmk}
Since the proof of Theorem \ref{006theorem7.9ABOVE} follows closely the same reasoning as that of Theorem \ref{006theorem7.9}, it is omitted for brevity.、

\section*{Data Availability Statements}
Data sharing not applicable to this article as no datasets were generated or analysed during the current study.

\section*{Conflict of interest}
All authors have the same contribution to the article and enjoy equal status and all authors declare that they have no conflict of interest.

\end{document}